\documentclass[reqno,oneside]{amsart}


\usepackage{amssymb,graphicx,braket,amsmath,amsfonts,enumitem}
\usepackage[mathscr]{euscript}
\usepackage{relsize}
\usepackage[hidelinks]{hyperref}
\usepackage[utf8]{inputenc}
\usepackage{braket}
\usepackage{color}
\usepackage{amsaddr}
\usepackage{tikz}
\usetikzlibrary{shapes,arrows,er,positioning}
\usetikzlibrary{shapes.multipart}
\usepackage{stmaryrd}
\usepackage{fancyhdr}
\usepackage{array}
\usepackage{wasysym}
\usepackage{bm}
\usepackage[all]{xy}
\usepackage[labelfont={rm},labelformat=simple]{subcaption}

\newcommand*{\xdash}[1][3em]{\rule[0.5ex]{#1}{0.55pt}}
\newcommand{\zbar}{%
  \text{\ooalign{\raisebox{0.19ex}{\kern0.22em \xdash[0.3em]}\cr$Z$\cr}}%
}

\usepackage[lmargin=1.1in,rmargin=1.1in,tmargin=1.35in,bmargin=1.35in]{geometry}





\usetikzlibrary{arrows}

\theoremstyle{plain}
\newtheorem{theorem}{Theorem}[section]
\theoremstyle{plain}
\newtheorem{proposition}[theorem]{Proposition}
\newtheorem*{proposition*}{Proposition}
\newtheorem*{theorem*}{Theorem}
\newtheorem{lemma}[theorem]{Lemma}

\newtheorem*{assumption*}{Assumption}

\newtheorem{conjecture}[theorem]{Conjecture}

\theoremstyle{definition} 
\newtheorem{definition}[theorem]{Definition}

\newtheorem{remark}[theorem]{Remark}

\numberwithin{equation}{section}

\makeatletter
\renewcommand{\section}{\@startsection
{section}
{1}
{\z@}
{-\baselineskip}
{0.8\baselineskip}
{\centering\scshape\large}} 

\renewcommand{\subsection}{\@startsection
{subsection}
{2}
{\z@}
{-0.8\baselineskip}
{0.5\baselineskip}
{\normalfont \bf \normalsize}} 

\renewcommand{\subsubsection}{\@startsection
{subsubsection}
{3}
{\z@}
{-0.8\baselineskip}
{0.5\baselineskip}
{\normalfont \bf \normalsize}} 
\makeatother

\usepackage[draft]{}

\begin{document}
\title[Refined BPS structures and topological recursion]{Refined BPS structures and topological recursion --- \\ the Weber and Whittaker curves}

\author{Omar Kidwai}
\address{\begin{center}School of Mathematics, Watson Building, University of Birmingham\end{center} Edgbaston, Birmingham, B15 2TT, United Kingdom}
\address{\vspace{-2mm}and\vspace{-2mm}}

\address{\begin{center}Theoretical Sciences Visiting Program (TSVP), Okinawa Institute of Science and Technology\end{center} 1919-1 Tancha, Onna-son, Kunigami-gun,
Okinawa, 904-0495, Japan}
\email{o.kidwai@bham.ac.uk}

\author{Kento Osuga}
\address{\begin{center}Graduate School of Mathematical Sciences,  University of Tokyo\end{center} 
3-8-1 Komaba, Meguro-ku, Tokyo, 153-8914, Japan}\email{osuga@ms.u-tokyo.ac.jp}

\begin{abstract}
We study properties of the recently established refined topological recursion for some simple spectral curves associated to quadratic differentials. We prove explicit formulas for the free energy and Voros coefficients of the corresponding quantum curves, and conjecture expressions for all other (smooth) genus zero degree two curves. The results can be written in terms of Bridgeland's notion of refined BPS structure associated to the same initial data, together with a quantum correction to the central charge. The corresponding ``invariants'' appear to be new, but their interpretation in terms of Donaldson-Thomas theory or QFT is not entirely clear.


\end{abstract}

\newpage\maketitle
\setcounter{tocdepth}{2}\tableofcontents

\section{Introduction}\label{sec:intro}

This paper generalizes a recently discovered relationship between the theory of topological recursion and BPS structures \cite{IWAKI2022108191} to the \emph{refined} setting. On one side, this involves considering the refined topological recursion recently established in \cite{kidwai2023quantum}, corresponding to a certain natural one-parameter deformation of the usual Chekhov-Eynard-Orantin topological recursion \cite{CEO,EO}, and on the other side roughly means we consider the structure appearing in refined Donaldson-Thomas theory \cite{kontsevich2008stability}. In the present paper we focus on the simplest examples of genus zero, the Weber and Whittaker curves\footnote{We also treat the Airy and degenerate Bessel curves, but they are almost trivial to handle.}, named for the corresponding classical ODEs appearing as their quantizations.

More precisely, our starting point is a meromorphic quadratic differential $\varphi$ on a compact Riemann surface $X$. From such an object, we may construct a corresponding \emph{refined spectral curve}, to which we may apply the refined topological recursion\footnote{Strictly speaking, in our previous work we only defined the refined recursion when the spectral curve is of degree two and genus zero. The higher genus case is expected to work (at least when $X=\mathbf{P}^1$), so long as the curve is (hyper)elliptic, though the higher degree case remains open.}. We will study the resulting invariants and the corresponding \emph{refined quantum curves} obtained in \cite{kidwai2023quantum}. The latter are Schr\"odinger-like operators containing a formal parameter $\hbar$ which annihilate a certain wavefunction constructed via the (refined) topological recursion. The goal of this paper is twofold: to determine these invariants explicitly, and to show that they admit an expression in terms of natural BPS-theoretic data constructed from the same initial data.

Our main examples in the present paper are the so-called Weber and Whittaker spectral curves, which arise as double covers associated to the quadratic differentials 
\begin{align}
 \varphi_{\rm Web}=\left(\frac{1}{4}x^2-m\right)dx^2&\hspace{-1cm} &\varphi_{\rm Whi}=\frac{x+4m}{4x}dx^2
\end{align}
respectively, where $x$ is a coordinate on $X=\mathbf{P}^1$, and $m\neq 0$ is a complex parameter. They are two of a class of fundamental examples, all of which have genus zero and degree two -- we call these the \emph{spectral curves of hypergeometric type}, since their quantizations are hypergeometric equations (and confluences thereof), depicted in Figure \ref{fig:confdiag}. The present paper treats the cases in which none of the poles are of second order (Whittaker, Weber, degenerate Bessel, and Airy).

\begin{figure}[h]
$$
\xymatrix@!C=25pt@R2pt{
&&
&& \underset{2(1)+(-6)}
{\text{Weber}} \ar@{~>}[rrd]
&&
\\
&& \color{lightgray}\underset{2(1)+(-2)+(-4)}
{\text{\color{lightgray} Kummer}} \ar@{->}@[lightgray][rru] \ar@{->}@[lightgray][rrd] \ar@{~>}@[lightgray][rrddd]
&&
&& \underset{(1)+(-5)}
{\text{Airy}}
\\
&&
&& \underset{(1)+(-1)+(-4)}
{\text{Whittaker}} \ar@{~>}[rru] \ar@{~>}[rrddd]
&&
&&
\\
\color{lightgray}\underset{2(1)+3(-2)}
{\text{\color{lightgray} Gauss}} \ar@{->}@[lightgray][rruu] \ar@{~>}@[lightgray][rrdd]
&&
&&
\\
&&
&& \color{lightgray}\underset{(1)+(-2)+(-3)}
{\text{\color{lightgray} Bessel}} 
\ar@{->}@[lightgray][rruuu]|(0.32)\hole \ar@{~>}@[lightgray][rrd]
&&
&&
\\
&& \color{lightgray}\underset{(1)+(-1)+2(-2)}
{\text{\color{lightgray} Degenerate Gauss}} 
\ar@{~>}@[lightgray][rru] \ar@{~>}@[lightgray][rrd] \ar@{~>}@[lightgray][rruuu]|(0.67)\hole
&&
&& \underset{(-1)+(-3)}
{\text{ Degenerate Bessel}}
\\
&&
&& \color{lightgray}\underset{2(-1)+(-2)}
{\text{\color{lightgray} Legendre}}\ar@{~>}@[lightgray][rru]
&&
\\
}
$$
\caption{Confluence diagram of hypergeometric type equations/spectral curves. The scheme $k(1)+\sum k_i(-r_i)$ below the name indicates the quadratic differential has $k$ simple zeroes and $k_i$ poles of order $r_i$. A straight line (resp. wavy line) in the figure
denotes the confluence of singular points
(resp. the coalescence of a branch point and a singular point). We treat the examples in black in the present paper, which all share the property that no second order poles are present.
}
\label{fig:confdiag}
\end{figure}
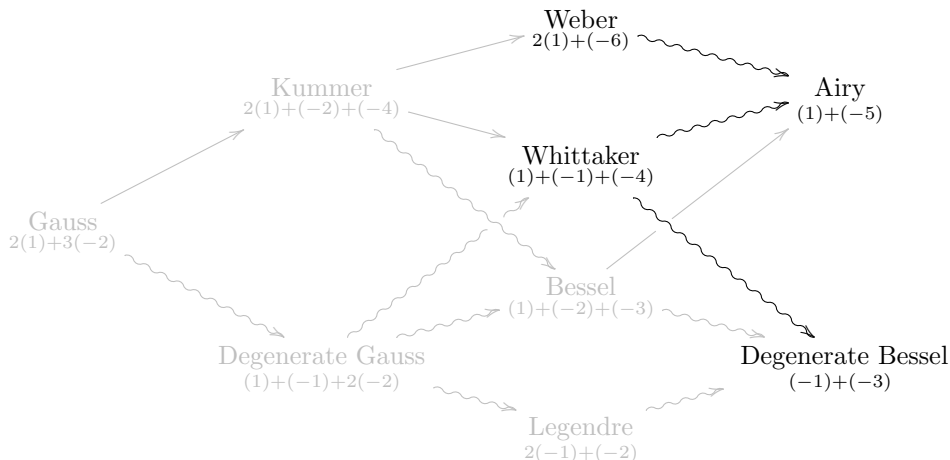

The results of \cite{iwaki2023voros,iwaki2019voros,IWAKI2022108191,iwaki2023topological} provided explicit formulas for invariants of the quantum curve, the so-called \emph{Voros coefficients}, as well as an explicit formula and relation to the \emph{free energy} of the topological recursion. We generalize these results for the mentioned curves, and conjecture them for the remaining ones. We find both Voros coefficients and the free energy of the refined topological recursion can be written in terms of a \emph{refined BPS structure}, in analogy with \cite{IWAKI2022108191}, together with an enhanced ``quantum central charge'' containing an $\hbar$-correction.

\subsection{Background}

\subsubsection{Refined BPS structures}



The notion of BPS structure was introduced by Bridgeland \cite{Bri19} axiomatizing the structure of Donaldson-Thomas (DT) invariants of a 3-Calabi-Yau triangulated category with a stability condition, and is a special case of a stability structure in the sense of Kontsevich-Soibelman \cite{kontsevich2008stability}. This same structure appeared in the work of Gaiotto-Moore-Neitzke \cite{gaiotto2010four,GAIOTTO2013239,Gaiotto:2012rg}, who studied the physics of a class of four-dimensional $\mathcal{N}=2$ supersymmetric QFTs, in which they describe the particle spectra of the theory at a Coulomb vacuum. Mathematical applications of their work abound, including a method for obtaining the hyperk\"ahler metric on the Hitchin moduli space, and a conjectural algorithm for determining Donaldson-Thomas invariants themselves. The refined version was less studied in physics but also appeared in a few works, in which the spin of the particles is studied \cite{Dimofte:2009tm,dimofte2010refined,galakhov2015spectral}.  There is a natural refined analogue of a BPS structure, which first appeared in \cite{BBS} and appears in refined Donaldson-Thomas theory. A (integral) refined BPS structure $(\Gamma,Z,\Omega)$ consists of
\begin{itemize}
\item 
a lattice $\Gamma$ equipped with an anti-symmetric pairing $\langle \cdot , \cdot \rangle$, 
\item 
a group homomorphism $Z : \Gamma \to {\mathbb C}$ called the \emph{central charge}, and
\item 
a map of sets $\Omega : \Gamma \to {\mathbb Z}[t,t^{-1}]$ called the \emph{(refined) BPS invariants}, 
\end{itemize}
satisfying certain conditions, crucially that $\Omega$ satisfies $\Omega(\gamma)=\Omega(-\gamma)$. Here $t$ is a formal variable.


In practice, we always consider BPS structures arising from quadratic differentials $\varphi$, via a curve $\Sigma$ called the \emph{spectral cover}. In this case $\Gamma$ is roughly $H_1({\Sigma},\mathbb{Z})$, $Z$ is the period of a certain one-form on $\Sigma$, and $\Omega$ is determined by the geometry of $\varphi$ (the behaviour of certain \emph{trajectories} on $X$). A definition of the latter was given in \cite{IWAKI2022108191}, generalizing Gaiotto-Moore-Neitzke \cite{GAIOTTO2013239} (see also \cite{bridgeland2015quadratic}). In this paper, we consider the relevant \emph{refined} BPS structure and propose a generalization of this construction. Surprisingly, the resulting refined BPS structures do not seem to have appeared in the literature before.
\subsubsection{(Refined) Topological recursion and quantum curves}

Inspired by \cite{CE,CEM1,CEM2}, in our previous work we established a refined generalization of the topological recursion and corresponding quantization of genus zero spectral curves of degree two \cite{kidwai2023quantum}. Roughly speaking, refined topological recursion, which depends on a complex parameter $\beta\in\mathbb{C}^*$ (in fact, only on $\mathscr{Q}:=\beta^\frac12-\beta^{-\frac12}$), takes an algebraic curve and some additional data and produces a tower of multidifferentials $\omega_{g,n}$ ($g\in\frac12\mathbb{Z}_{\geq 0},n\in\mathbb{Z}_{>0}$) which may be considered as invariants of the curve. 


We may carefully assemble the $\omega_{g,n}$ together to obtain the \emph{topological recursion wavefunction} $\psi^{{\rm TR}}(x)$, which is the exponential of a formal $\hbar$-series. If there exists a differential operator on $X$ that annihilates the wavefunction of the form
\begin{equation}
\left(\hbar^{2}\dfrac{d^2}{dx^2}+q(x,\hbar)\hbar \dfrac{d}{dx}+r(x,\hbar)\right)\psi^{\rm \mathsmaller{TR}}(x)=0,
\end{equation} 
and whose semiclassical limit gives the equation for the spectral curve, then such an equation is called the \emph{quantum curve} associated to the spectral curve \cite{BE2}.

Quantizing curves in general is highly nontrivial, with progress made by a number of authors e.g. \cite{GS,BE1,Iwaki19}. Most relevant for us, for spectral curves of hypergeometric type, Iwaki-Koike-Takei \cite{iwaki2023voros,iwaki2019voros} were able to show existence of and obtain explicit expressions for quantum curves (in the unrefined case). Furthermore, they used the form of the quantum curves to obtain closed formulae for certain topological recursion invariants called \emph{free energies}. The refined analogues of these quantum curves were obtained in our previous work \cite{kidwai2023quantum}, and we call the corresponding quantum curves the \emph{(refined) quantum curves of hypergeometric type}.

Such differential operators have invariants. In particular, the so-called \emph{Voros coefficients} are certain formal $\hbar$-series of integrals (``quantum periods'') on the corresponding spectral curve, which play a fundamental role in WKB analysis and other applications. In general, it is difficult to find a closed formula for such coefficients. For unrefined quantum curves of hypergeometric type, an explicit computation of Voros coefficients was given in \cite{iwaki2019voros,iwaki2023voros}, following earlier progress \cite{TAKY:2008,publishedpapers/8600728}. One of the goals of the present work is to obtain an explicit formula for the Voros coefficients for the refined quantum curves of hypergeometric type, in the case without second order poles.


\subsection{Main results}

Our main results are twofold:
\begin{enumerate}
\item Explicit determination of Voros coefficients and free energies of the refined spectral/quantum Weber, Whittaker, degenerate Bessel, and Airy curves,
\item The observation that these can be expressed in terms of a refined BPS structure constructed from the corresponding quadratic differential, together with the quantum central charge.
\end{enumerate}
\noindent We conjecture these hold for all curves of hypergeometric type, though due to technical difficulties we were only able to confirm this for low values of $g$ in the case with second order poles.

These statements generalize the results of both \cite{iwaki2023voros,iwaki2019voros} and \cite{IWAKI2022108191} to the refined setting, and require several new ingredients. We propose values for $\Omega$ in Definition \ref{def:newRBPS}, some of which do not seem to have appeared before in the literature.
We also introduce the quantum correction to the central charge $\zbar:\Gamma\rightarrow \mathbb{C}$, essentially given by integrating along the new one-form $\omega_{\frac12,1}$ appearing as initial data in the refined topological recursion (see \S \ref{sec:quantumZ} for details).

For any $\gamma\in\Gamma$, let $\Omega_n(\gamma)$ denote the coefficient of $t^n$ in $\Omega(\gamma)$. Then our main theorem is:

\begin{theorem}\label{thm:mainintro}\, For the Weber, Whittaker, degenerate Bessel, and Airy refined spectral curves,
\begin{enumerate}
\item For half-integers $g>1$, the genus $g$ refined topological recursion free energy can be expressed in terms of the corresponding refined BPS structure $(\Gamma,Z,\Omega)$ and quantum correction $\zbar$ as

    \begin{equation}\label{eq:thm1aintro}    F_{g}={(-1)^{2g-2}}\sum_{\gamma\in\Gamma} \sum_{{ n \in \mathbb{Z}}} \dfrac{{\mathsf{B}_{2,2g}^{[n]}(\gamma)}}{4g(2g-1)(2g-2)}\Omega_n(\gamma)\left( 
{\dfrac{2 \pi i}{Z(\gamma)}} \right)^{2g-2}
\end{equation}
where we write
\begin{equation}
\mathsf{B}_{2,2g}^{[n]}(\gamma):=  B_{2,2g}{\left(\frac{\mathscr{Q}}{2}+\frac{\zbar(\gamma)}{2\pi i}+n\frac{\mathscr{Q}}{2}\,\Big|\,\beta^{\frac{1}{2}},-\beta^{-\frac{1}{2}}\right)}\end{equation}
with $B_{2,2g}$ the double Bernoulli polynomial of degree $2g$.
\medskip
\item  For integers $k\geq1$ and $\alpha$ a path which goes from $\infty_-$ to $\infty_+$ avoiding ramification points, the $k$-th Voros coefficient $V_{\alpha, k}$ is given explicitly by:


\begin{align}
\label{eq:thm1bintro}
V_{\alpha, k}
& =\frac{(-1)^{k+1}}{2}\sum_{\gamma \in \Gamma }\sum_{n\in\mathbb{Z}} 
{\langle{\gamma,\alpha}\rangle}{}\Omega_n(\gamma) \,\frac{\mathsf{B}_{1,k+1}^{[n]}(\gamma)}{k(k+1)}
 \, 
\left( \frac{2 \pi i}{Z(\gamma)} \right)^k.  
\end{align}
where $\langle\cdot,\cdot\rangle$ is the intersection pairing, and we write
{\begin{equation}
    \mathsf{B}_{1,k}^{[n]}(\gamma)=B_{1,k}\left(\frac{\beta^{\frac12}
    }{2}-\frac{\nu(\gamma)}{2\beta^{\frac12}}+\frac{{\zbar(\gamma)}}{2\pi i} +n\frac{{\mathscr{Q}}}{2}\,\Big|\,\beta^\frac12\right),
\end{equation}}
\noindent with $B_{1,k}$ the (single) Bernoulli polynomial of degree $k$.

\end{enumerate}
\end{theorem}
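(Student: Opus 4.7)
My plan is to generalize the quantum-curve strategy of Iwaki-Koike-Takei \cite{iwaki2019voros,iwaki2023voros} to the refined setting, using the explicit refined quantum curves constructed in our previous work \cite{kidwai2023quantum}. All four cases (Weber, Whittaker, degenerate Bessel, Airy) can be treated in parallel: they share the structural feature that no second-order poles appear, which is precisely what allows the arguments of the Bernoulli polynomials to stay regular and the closed formulas to hold (as opposed to the cases we can only conjecture).

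For part (2), I would start from the explicit refined quantum curve $\left(\hbar^2\partial_x^2+q(x,\hbar)\hbar\partial_x+r(x,\hbar)\right)\psi^{\rm TR}=0$ for each spectral curve. Writing $\psi^{\rm TR}=\exp S$ and extracting the odd part $S_{\rm odd}$ of its logarithmic derivative, the Voros coefficient $V_\alpha$ is obtained by integrating $S_{\rm odd}$ along $\alpha$ after subtracting the divergent leading terms near the poles. Since the mass parameter $m$ enters each quantum curve in a controlled way, I would derive a difference equation of the form $V_\alpha(m+c\hbar,\hbar)-V_\alpha(m,\hbar)=\Phi(m,\hbar)$ directly from the quantum curve, with $c$ and $\Phi$ read off from $q$ and $r$. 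Solving this difference equation formally order by order in $\hbar$, and recognising the solution through the generating function $\frac{t\,e^{xt}}{e^{\beta^{1/2}t}-1}=\sum_{k\geq 0}B_{1,k}(x|\beta^{1/2})\,t^k/k!$ of the (single) Bernoulli polynomial, would produce the explicit form \eqref{eq:thm1bintro}; the resulting coefficients are then to be identified with $\Omega_n(\gamma)$ via Definition \ref{def:newRBPS}, and the shifted argument with $\mathsf{B}_{1,k+1}^{[n]}$.

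For part (1), I would exploit the standard derivative relation between free energies and period integrals of $\omega_{g,1}$: mass differentiation $\partial_m F_g$ reduces to a period integral that can be rewritten in terms of $V_{\alpha,2g-1}$ by a residue/contour deformation argument. Combining this with the explicit Voros formula from part (2) and integrating back in $m$ yields \eqref{eq:thm1aintro}, with the constants of integration pinned down either by direct refined TR computation at the lowest nontrivial genus ($g=\tfrac32$ or $g=2$) or by a parity/symmetry argument. The double Bernoulli polynomial $B_{2,2g}(\cdot|\beta^{1/2},-\beta^{-1/2})$ appears naturally because the refined free energy, unlike the Voros coefficient, depends symmetrically on both refinement parameters.

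The hardest step will be matching the output of the preceding computation with the BPS-theoretic quantities. The generating function manipulations above guarantee \emph{some} expression in Bernoulli polynomials; the substantive content is that the numerical coefficients are exactly the refined BPS invariants $\Omega_n(\gamma)$ proposed in Definition \ref{def:newRBPS}, with central charge $Z(\gamma)$ the period of the canonical one-form on $\Sigma$ and the novel quantum correction $\zbar(\gamma)$ coming from $\omega_{\frac12,1}$ (see \S\ref{sec:quantumZ}). The appearance of the $\mathscr{Q}$-shift and the sum over $n\in\mathbb{Z}$, which has no analogue in the unrefined case, must be verified case by case against an independent computation of the BPS structure of each quadratic differential, and this is where the true conceptual work lies.
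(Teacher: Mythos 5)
Your overall toolkit (difference equations solved via Bernoulli generating functions, following Iwaki--Koike--Takei) is the right one, but the logical architecture you propose has two genuine gaps. First, your derivation of part (1) from part (2) does not work as stated: $V_{\alpha,k}$ is a sum over \emph{all} $(g,n)$ with $2g-2+n=k$ of $n$-fold integrals of $\omega_{g,n}$, weighted by $\nu_\pm^n/(n!\beta^{n/2})$, so $\partial_m F_g=\int_\alpha\omega_{g,1}$ cannot be isolated from $V_{\alpha,2g-1}$ by any contour deformation --- the genera mix. The correct bridge is the all-order difference relation of Proposition \ref{prop:diffreln}, in which $V_\alpha$ equals a \emph{difference of $F$ at $\hbar$-shifted masses} (plus unstable corrections), and the actual proof runs in the opposite direction to yours: one combines that relation with the $\nu$-contiguity relations of Lemma \ref{lemm:Vrelns} (which is what the quantum curve really gives you --- shifts in $\nu$, not directly in $m$) to obtain the \emph{double}-difference equation $\Delta_{\epsilon_1,\epsilon_2}\cdot F=R_\bullet\log m$ of Proposition \ref{prop:Fdiff}, solves that for $F$ via double Bernoulli polynomials with the ambiguity killed by homogeneity of $F_g$ in $m$, and only then deduces $V_\alpha$ by feeding $F$ back into the difference relation. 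This ordering is forced: the single difference equation for $V_\alpha$ alone leaves a periodic ambiguity that is resolved through $F$.

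Second, and more seriously, you have omitted the main technical content of the theorem: proving the $V$--$F$ relation requires interpreting the wavefunction's iterated integrals $\int_{D(z)}\cdots\int_{D(z)}\omega_{g,n}$ as genuine integrals with endpoints at $\infty_\pm\in\mathcal{P}$ and then applying the variational formula \eqref{Fvariation}. In the refined setting $\omega_{g,n}$ has poles along $\sigma(J_0)$, so these integrals are a priori only meromorphic continuations, and one must prove they are \emph{continuous} as the endpoint approaches $\sigma(q_i)$. This is Lemma \ref{lem:continuous}, established by an inductive uniform-boundedness and dominated-convergence argument on the global loop equations, and it is precisely what fails in the presence of second-order poles --- not, as you suggest, any issue with "arguments of the Bernoulli polynomials staying regular.'' Without this lemma the chain from the quantum curve to $F_g$ is broken. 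A minor further gap: Airy and degenerate Bessel carry no mass parameter, so your difference-equation strategy is vacuous for them; one needs a separate homogeneity/residue argument (Appendix \ref{appendix:airydbes}) showing $F_g=0$ directly, consistent with the trivial BPS structure.
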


\noindent In fact, we claim that formula \eqref{eq:thm1aintro} continues to hold for all nine spectral curves in Figure \ref{fig:confdiag}: 

\begin{conjecture}
    For any refined spectral curve of hypergeometric type, \eqref{eq:thm1aintro} holds with the refined BPS structure and quantum correction to the central charge defined by the construction of \S\ref{sec:bpsfromqd}.
\end{conjecture}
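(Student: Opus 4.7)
The plan is to mirror the strategy that establishes Theorem \ref{thm:mainintro}(1) for the four curves without second-order poles, carrying it through for the remaining five curves (Kummer, Gauss, Bessel, Degenerate Gauss, Legendre) in Figure \ref{fig:confdiag}. On the topological recursion side, one first writes down the refined quantum curve in each case via the construction of \cite{kidwai2023quantum}; for genus zero degree two curves the quantum curve is of the stated Schr\"odinger-like form, and its coefficients $q(x,\hbar)$, $r(x,\hbar)$ can be read off directly from the quadratic differential together with the initial datum $\omega_{\frac12,1}$. One then extracts a closed formula for $F_g$ either by WKB-type expansion of $\psi^{\rm TR}$ and use of $\hbar$-difference equations satisfied by its integrated form (as in \cite{iwaki2019voros,iwaki2023voros}), or by leveraging that for these hypergeometric-type examples the wavefunction is a refined deformation of a classical special function whose asymptotic expansion has a known Bernoulli-polynomial form. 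Either route should yield a double-Bernoulli polynomial expression for $F_g$ with arguments determined by the pole structure of $\varphi$.

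On the BPS side, I would first assemble the refined BPS structure $(\Gamma,Z,\Omega)$ and quantum correction $\zbar$ for each new curve following the recipe of \S\ref{sec:bpsfromqd} and Definition \ref{def:newRBPS}. The second-order poles contribute saddle trajectories (``flavour charges'') in addition to the usual finite-length trajectories, and it is crucial to pin down the correct multiplicities $\Omega_n(\gamma)$ for these new classes. As a consistency check, I would use the confluence morphisms depicted in Figure \ref{fig:confdiag}: each arrow degenerates a higher curve to a lower one, and both sides of \eqref{eq:thm1aintro} should transform compatibly. This gives a strong constraint on $\Omega$ from the already-established cases and, in the best scenario, determines it essentially uniquely. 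With $\Omega$, $Z$, and $\zbar$ in hand, the proof of the conjecture reduces to a direct comparison of two explicit generating functions in Bernoulli polynomials.

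The main obstacle is precisely the appearance of second-order poles, which is the reason the authors only verify the conjecture for small $g$. At a second-order pole the residue of the Liouville one-form contributes an $\hbar$-independent mass parameter, but in the refined theory this shifts are entangled with the refinement parameter $\mathscr{Q}$ and the quantum correction $\zbar$. Concretely, the sum over $\gamma\in\Gamma$ in \eqref{eq:thm1aintro} now contains purely ``flavour'' classes with $Z(\gamma)$ proportional to these residues, and one must simultaneously (i) identify the correct $\Omega_n(\gamma)$ for such classes --- which may not match any standard refined DT prescription --- and (ii) verify that the resulting Bernoulli-polynomial sum reproduces the TR side, including the shift $n\mathscr{Q}/2$ that encodes the infinite tower of BPS states at flavour charges. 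I expect that the combinatorial identity encoding this matching is a refined analogue of the Bernoulli identities used in \cite{IWAKI2022108191,iwaki2023voros}, and that establishing it in the flavour-charge sector is the real technical heart of the proof.

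A secondary, but genuine, difficulty is that the refined topological recursion of \cite{kidwai2023quantum} is only fully justified for degree two genus zero curves, and the construction of $F_g$ via $\omega_{g,n}$ for half-integer $g$ requires a choice of regularization when the curve has irregular singularities colliding in the confluence limits. Addressing this would require either a direct definition of $F_g$ for the refined hypergeometric-type curves bypassing $\omega_{g,1}$ (e.g. via an integral formula in $m$), or a careful check that different regularizations yield the same BPS expansion. Once these two obstacles are overcome, the remaining verification is largely routine bookkeeping of residues and Bernoulli polynomial identities.
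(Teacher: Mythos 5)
First, note that the statement you are asked to prove is stated in the paper as a \emph{conjecture}: the authors prove \eqref{eq:thm1aintro} only for the Weber, Whittaker, degenerate Bessel and Airy curves (the cases with no second-order poles), and for the remaining curves of Figure \ref{fig:confdiag} they only verify it for low values of $g$. So there is no proof in the paper to compare against, and your proposal must be judged as a research plan. As such it is reasonable in outline, but it misidentifies the precise point of failure. The paper's proof in the established cases runs through the difference relation \eqref{eq:diffrelnintro} between $V_\alpha$ and $F$, whose derivation requires rewriting the divisor integrals $\int_{D(z)}\cdots\int_{D(z)}\omega_{g,n}$ at $z\to\infty_\pm$ as genuine iterated integrals between poles; this step is valid only because of the continuity statement of Lemma \ref{lem:continuous} (uniform boundedness of the intermediate integrals $I^{(k)}_{g,n+1}$), cf.\ Remark \ref{rem:crucial}. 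The authors state explicitly that in the presence of second-order poles this uniform boundedness breaks down and, as a consequence, \eqref{eq:diffrelnintro} \emph{does not hold} --- it is not merely harder to prove. Your plan to ``mirror the strategy'' therefore stalls at the very first step: one would need a corrected difference relation, accounting for the discontinuity of the meromorphic functions $F_{g,n+1}$ at the preimages of second-order poles, before any Bernoulli-polynomial bookkeeping can begin. Your diagnosis locates the difficulty in the flavour-charge sector of the BPS sum, which is a downstream symptom rather than the cause.

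Second, your hope that the confluence diagram pins down $\Omega$ ``essentially uniquely'' is contradicted by Proposition \ref{prop:Blemma}: formula \eqref{eq:thm1aintro} only constrains the combinations $\Omega_n(\gamma)+\Omega_{-n}(\gamma)$, so no amount of matching free energies (at any genus, or across confluences) can single out the paper's choice of $\Omega$; the authors select theirs using palindromicity where possible and external input from Donaldson--Thomas theory for the degenerate ring domain value $-t^{-1}$. Finally, the closing claim that ``the remaining verification is largely routine bookkeeping'' is not supportable: the authors, having carried out exactly this bookkeeping in the four proven cases, were unable to complete it in the presence of second-order poles. Your proposal should be regarded as an incomplete strategy outline rather than a proof of the conjecture.
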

An analogous conjecture should hold for the Voros coefficients, but we do not state it here for brevity of notation and to avoid a few subtleties. Although we are unable to prove the result for the cases with second order poles, we have checked \eqref{eq:thm1aintro} for low values of $g$ for the BPS structures given in Definition \ref{def:newRBPS}, and found perfect agreement.

We note that the BPS structure we have defined is not the only one that will make formula \eqref{eq:thm1aintro} hold. In particular, we show in Proposition \ref{prop:Blemma} that any other assignment $\Omega$ which preserves $\Omega_n+\Omega_{-n}$ for all $n$ will work. We may select our particular $\Omega$ by various means, such as palindromicity (when possible), or other hints in the literature, but a complete understanding remains to be discovered.

In order to prove our Theorem \ref{thm:mainintro}, we rely on a relation between the free energy and Voros coefficient, generalizing that of \cite{iwaki2023voros,iwaki2019voros} to the refined case, which is of independent interest:
\begin{proposition}\label{thm:diffrelnintro}
Suppose $\mathcal{S}^{{\bm \mu}}$ is either the Weber or Whittaker curve, and let $\alpha$ denote a path from $\infty_-$ to $\infty_+$ avoiding ramification points. We have:


\begin{equation}\label{eq:diffrelnintro}
V_{\alpha}=F\left(m-\frac{\nu-1}{2\beta^\frac12}\hbar\right)    - F\left(m-\frac{\nu+1}{2\beta^\frac12}\hbar\right) -\frac{1}{{\beta^{\frac12}}\hbar}\dfrac{\partial F_0}{\partial m}+\frac{\nu}{2\beta} \dfrac{\partial^2 F_0}{\partial^2 m}-\frac{1}{{\beta^{\frac12}}}\dfrac{\partial F_{\frac12}}{\partial m}
\end{equation}
 understood as a relation between formal series in $\hbar$.
\end{proposition}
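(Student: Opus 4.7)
The plan is to verify \eqref{eq:diffrelnintro} as an identity of formal power series in $\hbar$, by computing both sides explicitly and matching them coefficient by coefficient. The key initial observation is that the three correction terms on the RHS are designed to cancel the divergent ($\hbar^{-1}$) and constant ($\hbar^0$) contributions of the finite difference of $F$, so that the RHS reduces to a formal power series starting at $\hbar^1$---the natural form of $V_\alpha$. Concretely, writing $c=\tfrac{\nu-1}{2\beta^{1/2}}$, $d=\tfrac{\nu+1}{2\beta^{1/2}}$, and substituting $F=\sum_g \hbar^{2g-2}F_g$ gives
\begin{equation*}
F(m-c\hbar)-F(m-d\hbar)=\sum_{k\geq 1}\sum_{g\geq 0}\frac{(-1)^k(c^k-d^k)}{k!}\,\hbar^{k+2g-2}\,\partial_m^k F_g(m).
\end{equation*}
Using $d-c=\beta^{-1/2}$ and $d^2-c^2=\nu/\beta$, one checks directly that the only $(k,g)$ contributions at $\hbar^{-1}$ or $\hbar^0$ come from $(1,0)$, $(2,0)$, $(1,\tfrac12)$, producing exactly the three correction terms in \eqref{eq:diffrelnintro}.

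The next step is to obtain closed-form expressions for both sides and compare. Using the refined topological recursion established in our previous work, I would inductively compute the $\omega_{g,n}$ for the Weber (resp.\ Whittaker) curve; for these rational, genus-zero, degree-two curves, the $F_g(m)$ admit explicit expressions in $m$ with coefficients polynomial in $\beta^{\pm 1/2}$, yielding closed forms for every $\partial_m^k F_g(m)$. Separately, the refined quantum curves for Weber and Whittaker are explicit $\hbar$-deformations of the classical parabolic cylinder and Whittaker equations, and their WKB expansion produces the Voros coefficients $V_{\alpha,k}$ as explicit series in $\hbar$ whose coefficients are Bernoulli polynomials evaluated at $\beta,\nu$-dependent shifts of $m$, by direct adaptation of the unrefined computation of Iwaki-Koike-Takei \cite{iwaki2019voros,iwaki2023voros}.

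The final comparison amounts to a Bernoulli polynomial identity relating the finite difference of $F$ (with step $\beta^{-1/2}\hbar$ and shift parameter $\nu$) to the Voros coefficient series. The main obstacle is the careful combinatorial bookkeeping of the shifts by $(\nu\pm 1)/(2\beta^{1/2})$ and of the role played by the new initial datum $\omega_{\frac12,1}$ of the refined recursion; the latter is responsible for the $\partial_m F_{\frac12}$ correction term, which has no analogue in the unrefined setting and is the genuine new feature of the refined identity. An alternative, more conceptual route would be to observe that both sides satisfy the same first-order $\hbar$-difference equation in $m$, inherited from the contiguity relations of the parabolic cylinder and Whittaker functions (equivalently, from the refined quantum curve), and then match leading behavior.
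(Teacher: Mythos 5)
Your proposal does not contain the argument that actually proves this identity, and the route you sketch is essentially circular with respect to the logical structure of the paper. The identity is not verified by computing closed forms for both sides and comparing: in the paper the closed formulas for $F_g$ (Theorem \ref{thm:main}) and for the Voros coefficients are \emph{consequences} of this proposition, obtained by combining it with the contiguity relations of Lemma \ref{lemm:Vrelns} to derive and then solve the difference equation \eqref{F4Web}, \eqref{F4Whi}. Your "alternative, more conceptual route" has the same problem: the $\hbar$-difference equation in $m$ satisfied by $F$ is Proposition \ref{prop:Fdiff}, which is itself deduced from the statement you are trying to prove. To make your strategy non-circular you would need to compute all $F_g$ and all $V_{\alpha,k}$ independently (from the recursion and from refined WKB, respectively), which is precisely the hard problem the difference-relation technique is designed to avoid; you do not carry this out.

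The actual proof is short but rests on two ingredients your proposal never invokes. First, the variational formula (Proposition \ref{theorem:variation}), $\partial_m^n F_g=\int_\alpha\cdots\int_\alpha\omega_{g,n}$, is the bridge that converts the multi-integrals of $\omega_{g,n}$ appearing in the log-derivative of the wavefunction into $m$-derivatives of the free energy; without it there is no mechanism to produce $F(m-\tfrac{\nu\mp1}{2\beta^{1/2}}\hbar)$ at all. Second, and this is what the paper calls the main technical hurdle, one must justify evaluating the divisor integrals $\int_{D(z)}\cdots\int_{D(z)}\omega_{g,n}$ at $z=\infty_\pm$, i.e.\ at points of $\mathcal{P}$ where the integrals are a priori only defined as limits of meromorphic functions and need not be continuous. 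This is Lemma \ref{lem:continuous}, proved by a uniform-boundedness/dominated-convergence induction, and it is exactly the point that distinguishes the Weber and Whittaker curves from the hypergeometric curves with second-order poles (for which \eqref{eq:diffrelnintro} genuinely fails). Once these two facts are in hand, the computation is the elementary rearrangement you describe in your first paragraph: write $D(\infty_+)=\nu_{\infty_-}([\infty_+]-[\infty_-])$ and $D(\infty_-)=-\nu_{\infty_+}([\infty_+]-[\infty_-])$, apply the variational formula, resum the Taylor series in $\hbar\partial_m$, and subtract the unstable terms. Your identification of which $(k,g)$ produce the three correction terms is correct, but it is the only part of your proposal that survives contact with the real proof.
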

\noindent It is in proving this relation that the main technical hurdle in the present paper appears, and we must carefully analyze the limiting behaviour of multi-integrals appearing in the definition of the topological recursion wavefunction (see Remark \ref{rem:crucial} and Lemma \ref{lem:continuous}). This was unnecessary in the unrefined case due to the simpler pole structure of $\omega_{g,n}$, and requires an argument involving the uniform boundedness of integrands. It is precisely the breakdown of this property which prevents our argument from applying in the presence of second-order poles, and resultingly, \eqref{eq:diffrelnintro} does not hold.

\subsection{Interpretation}

Several observations of this paper come as a surprise, and our result raises a number of questions. The most notable is that the value
\begin{equation}
\Omega(\gamma)=t+t^{-1}
\end{equation}
we associate to the Whittaker curve is new and unexpected, not appearing in Donaldson-Thomas theory nor supersymmetric field theory (though a similar structure appears in WKB analysis \cite{2000297,iwaki2014exact} describing the jump of Borel-resummed WKB solutions to the Whittaker equation in a special case). We were unable to find a satisfactory explanation for its appearance in these contexts.

We also note that a key role is played in our formula by the {quantum correction to the} central charge which appears naturally and automatically from the initial data of the topological recursion, but whose meaning is also not clear from either of those perspectives. We hope to understand what role it plays, if any, in the theory of stability and BPS structures in general.
 
It is natural to ask if the $\Omega(\gamma)$ we assign are in fact the refined Donaldson-Thomas invariants of some Calabi-Yau-3 triangulated category --- yet a cursory attempt suggests that they do not resemble virtual motives of any obvious space, which would be necessary for such an interpretation. Indeed, a category whose stability conditions are identified with quadratic differentials with simple poles was constructed by Haiden \cite{Haiden:2021qts} (although no higher order poles were allowed, in contrast to here). In that work, the refined BPS invariants were calculated, and in the unrefined limit were found to match with the BPS invariants predicted by one of the authors with K. Iwaki in \cite{IWAKI2022108191}. But in the refined case, if we compare the present paper's value for the Whittaker curve, we notice $\Omega(\gamma)=t+t^{-1}$ is certainly different from $\Omega(\gamma)=2$ (an analogous discrepancy arises in the case of the Legendre curve). Although they were not computed explicitly in the work of Bridgeland-Smith, it does not appear that the BPS invariants in that work agree with the ones we have given here, but rather with the ones observed by Haiden\footnote{This will be proved in \cite{kidwai:toappear}.}. Thus, a natural question is whether there is in fact some other CY3 category out there which correctly captures the structure appearing in the RTR free energy.

On the other hand, if they are not the refined BPS invariants of honest Donaldson-Thomas theory, it is natural to ask what they are. One possible answer would be to modify DT theory in an appropriate way; thinking along these lines is in fact natural from the quantum field theory point of view\footnote{We thank Andy Neitzke for pointing this out to us.}, which we elaborate on briefly in Remark \ref{rem:physics}. Whether such a ``multi-refinement'' would agree with our results is unclear, however, and further investigation is needed.

In any case, there appears to be one significant success predicted by our calculation. In search of an understanding of the invariants associated to second-order poles in our conjecture, upcoming work \cite{kidwai:toappear} shows that the corresponding invariant $\Omega(\gamma)=-t^{-1}$ actually \emph{does} appear as a BPS invariant of a CY3-triangulated category, under a natural identification of $t$. Thus, although some of our results remain mysterious, the prediction for second order poles indeed appears in the wild!
 

\subsection{Comments}
Let us make a few comments and mention some natural questions arising from our work.
\begin{itemize}
\item 
    The motivation for this work came from attempting to generalize the results of \cite{iwaki2023voros,iwaki2019voros,IWAKI2022108191,iwaki2023topological} to the refined setting. In particular, having established the formulas of this paper, we expect it should play a role in solving Barbieri-Bridgeland-Stoppa's so-called \emph{quantum Riemann-Hilbert problem} \cite{BBS} for corresponding refined BPS structures. It is natural to expect such considerations could hold more generally, in analogy with e.g. \cite{allegretti2021stability,bridgeland2023monodromy}, in which the uncoupled case was solved in the commutative setting. The relevant jump behaviour should also help clarify the true significance of the BPS structures observed here.


\item As in the unrefined case, we expect the formula (for an appropriate definition of $\Omega$) will continue to hold for any case in which the refined BPS structure is uncoupled. To check this in some examples, it would be necessary to e.g. generalize Y.M. Takei's construction of quantum curves in \cite{takei2020voros,doi:10.1142/9781800611368_0007,doi:10.1080/10652469.2021.1912038} to the refined setting.
\item Our formula for the Weber curve with parameter set to $\mu=1$ shows that $(-\frac{1}{\sqrt{\beta}})^{g}F_g$ coincides with the $s=0$ limit of the \emph{parametrized Euler characteristic} $\xi_{g}^{s}(\beta)$ of $s$-pointed real algebraic curves in \cite{goulden2001geometric}, generalizing the Harer-Zagier formula \cite{Zagier1986} (which can be obtained by the unrefined topological recursion \cite{iwaki2023voros}). It would be desirable to understand if there is a deeper relation between real or other kinds of algebraic curves and refined topological recursion \cite{iwaki2023voros}, and what the geometric meaning in this context of the parameter $\mu$ (introduced in \cite{kidwai2023quantum}) is.
\end{itemize} 

\subsubsection*{Acknowledgements}

We thank Tom Bridgeland, Andrea Brini, Ben Davison, Lotte Hollands, Kohei Iwaki, Gerg\H o Nemes, Andy Neitzke, and Nick Williams for helpful discussions and correspondence. 

Part of this research was conducted while visiting the Okinawa Institute of Science and Technology (OIST) through the Theoretical Sciences Visiting Program (TSVP)'s thematic program ``Exact Asymptotics: From Fluid Dynamics to Quantum Geometry''. Both authors partook in this program.

The work of OK was supported by the Leverhulme Trust's Research Project Grant ``Extended Riemann-Hilbert Correspondence, Quantum Curves, and Mirror Symmetry'', a JSPS Postdoctoral Fellowship for Research in Japan (Standard) and a JSPS Grant-in-Aid (KAKENHI Grant Number 19F19738). He also gratefully acknowledges the Simons Center for Geometry and Physics for hospitality and a stimulating environment during the program ``The Stokes Phenomenon and its Applications in Mathematics and Physics''.  The work of KO is in part supported by the Engineering and Physical Sciences Research Council under grant agreement ref.~EP/S003657/2, in part by the TEAM programme of the Foundation for Polish Science co-financed by the European Union under the European Regional Development Fund (POIR.04.04.00-00-5C55/17-00), and also in part by JSPS KAKENHI Grant Numbers 22J00102, 22KJ0715,
and 23K12968. 

\section{Review of refined topological recursion and quantum curves}

First, we recall the necessary definitions and results regarding the refined topological recursion \cite{kidwai2023quantum}. 

\subsubsection*{Multidifferentials}
Topological recursion involves a collection of geometric objects called multidifferentials defined on the product $(\mathcal{C})^k$ of a (say, compact) Riemann surface $\mathcal{C}$. 
Denote by $\pi_i:(\mathcal{C})^k\rightarrow \mathcal{C}$, $i=1,\ldots k$ the $i$th projection map. Then a (meromorphic) \emph{$k$-differential} is a meromorphic section of the line bundle
\begin{equation}\label{eq:multidif}
\pi_1^*(T^*\mathcal{C})\otimes \pi_2^*(T^*\mathcal{C})\ldots\otimes\pi_k^*(T^*\mathcal{C}). 
\end{equation}

We use the term \emph{multidifferential} to mean a $k$-differential without specifying $k$, and sometimes \emph{bidifferential} for the case $k=2$. Usually when writing multidifferentials in coordinates $z_1,\ldots z_k$ we simply omit the $\otimes$ symbol, and write e.g. $f(z_1,z_2) dz_1dz_2$. Integration and differentiation is defined factorwise in the obvious way. A multidifferential will be called \emph{symmetric} if it is invariant under permutation of variables:
\begin{equation}
\omega(p_1,\ldots,p_k)=\omega(p_{s(1)}, \ldots, p_{s(k)})
\end{equation}
for any permutation $s\in\mathfrak{S}_k$.

Note, on $\mathcal{C}=\mathbf{P}^1$, there is a distinguished bidifferential
\begin{equation}\label{eq:bidiff}
    B(z_1,z_2):=\dfrac{dz_1dz_2}{(z_1-z_2)^2}
\end{equation}
independent of the coordinate with a pole of biresidue $1$ along the diagonal and no poles elsewhere. $B$ is often called the fundamental bidifferential or the Bergman kernel.

\subsection{Refined Topological Recursion}

 We work in the setting in which refined topological recursion was established in our previous work \cite{kidwai2023quantum}. We omit a precise discussion of the assumptions, since all examples studied in this paper satisfy them. The main piece of data is a pair of functions $x,y:\mathcal{C}\rightarrow\mathbf{P}^1$ satisfying 
 \begin{equation}\label{eq:ysquaredQ}y^2 = Q(x)
 \end{equation}
 for rational $Q$ which is not a perfect square. In particular, our curves are canonically equipped with a globally defined involution $\sigma$ which is defined by exchanging sheets (viewing $x$ as a branched covering).

 Let us first fix some notation. We denote by $\mathcal{R}$ the set of ramification points of $x$, $\mathcal{P}$ the set of poles of $ydx$, and ${\mathcal{P}'}\subset\mathcal{P}$ the set of poles\footnote{The definition of ${\mathcal{P}'}$ here is slightly more general than one in \cite{kidwai2023quantum}, in which ${\mathcal{P}'}$ is defined as the set of unramified zeroes and poles of $y$ instead of $ydx$. All theorems and propositions shown in \cite{kidwai2023quantum} still hold as proven in \cite{Osu23-1}.} of $ydx$ that are not in $\mathcal{R}$. A ramification point $p\in\mathcal{R}$ is called \emph{ineffective} if $p\in\mathcal{R}\cap\mathcal{P}$ and \emph{effective} otherwise. We denote by $\mathcal{R}^*$ the set of effective ramification points. We assume that $ydx$ does not have zeroes away from ramification points --- this is a part of requirements of Assumption 2.7 in \cite{kidwai2023quantum}. Since ${\mathcal{P}'}$ does not contain ramification points, it admits a non-unique decomposition ${\mathcal{P}'}={\mathcal{P}'_{\!\mathsmaller{+}}}\cup\,\sigma({\mathcal{P}'_{\!\mathsmaller{+}}})$. We fix a choice of such a decomposition as part of our initial data, and to each $p\in {\mathcal{P}'_{\!\mathsmaller{+}}}$ we assign a parameter $\mu_p\in\mathbb{C}$. It is convenient to assemble them into one package as a (complex) divisor $D({\bm \mu})$ given by:

\begin{equation}
    D({\bm \mu}):=\sum_{p\in{\mathcal{P}'_{\!\mathsmaller{+}}}}\mu_p[p]\label{deta}
\end{equation}

\begin{definition}[\cite{kidwai2023quantum,Osu23-1}]\label{def:Rcurve}
A {\emph{genus zero degree two refined spectral curve}} $\mathcal{S}^{\bm \mu}$ is a quintuple \newline $\mathcal{S}^{\bm \mu}=(\mathcal{C},x,y,B,D({\bm \mu}))$ such that:
\begin{itemize}
    \item A compact Riemann surface $\mathcal{C}$ isomorphic to $\mathbf{P}^1$, 
    \item Meromorphic functions $x,y:\mathcal{C}\rightarrow \mathbf{P}^1$ satisfying \eqref{eq:ysquaredQ},
    \item $B$ is the unique symmetric bidifferential whose only pole is a double pole on the diagonal with biresidue 1,
    \item A choice of complex divisor $D({\bm \mu})$ supported at a choice of ${\mathcal{P}}_{\!\mathsmaller{+}}'$.
\end{itemize}
\end{definition}

We now define the refined topological recursion.

\begin{definition}[\cite{kidwai2023quantum,Osu23-1}]\label{def:RTR}
Fix $\mathscr{Q}\in\mathbb{C}$. Given a {genus zero degree two refined spectral curve} $\mathcal{S}^{\bm \mu}$, the \emph{refined topological recursion} is a recursive construction of an infinite sequence of multidifferentials $\omega_{g,n+1}$ for $2g,n\in\mathbb{Z}_{\geq0}$ with $2g+n\geq 0$ by the following formulae:
\allowdisplaybreaks[0]
\begin{equation}
    \omega_{0,1}(p_0):=y(p_0)dx(p_0), \qquad \omega_{0,2}(p_0,p_1):=-B(\sigma(p_0),p_1),
\end{equation}
\begin{equation}\label{eq:half1def}
    \omega_{\frac12,1}(p_0):=\frac{\mathscr{Q}}{2}\left(-\frac{dy(p_0)}{y(p_0)}+\sum_{p\in{\mathcal{P}'_{\!\mathsmaller{+}}}}\mu_p\eta_p(p_0)\right).
\end{equation}
\allowdisplaybreaks[1]

\noindent where $\eta_p(p_0):=\int_{\sigma(p)}^pB(p_0,\cdot)$, and for $2g-2+n\geq0$,
\begin{equation}
\omega_{g,n+1}(p_0,J)=-\left(\sum_{r\in\mathcal{R}}\underset{p=r}{\text{Res}}+\sum_{r\in\sigma(J_0)}\underset{p=r}{\text{Res}}+\sum_{r\in{\mathcal{P}'_{\!\mathsmaller{+}}}}\underset{p=r}{\text{Res}}\right)\frac{\eta_p(p_0)}{2\omega_{0,1}(p)}\!\cdot\text{Rec}_{g,n+1}^{\mathscr{Q}}(p,J),\label{eq:RTR}
\end{equation}
where
\begin{align}
\text{Rec}^\mathscr{Q}_{g,n+1}(p,J)=&\sum_{i=1}^n\frac{dx(p)\cdot dx(p_i)}{(x(p)-x(p_i))^2} \cdot \omega_{g,n}(p,\widehat{J}_i)+\sum^{*}_{\substack{g_1+g_2=g \\ J_1\sqcup J_2=J}} \omega_{g_1,n_1+1}(p,J_1) \cdot  \omega_{g_2,n_2+1}(p,J_2) \nonumber\\
&+\omega_{g-1,n+2}(p,p,J)+\mathscr{Q}\,dx(p)\cdot d_p\left(\frac{ \omega_{g-\frac12,n+1}(p,J)}{dx(p)}\right),\label{RecQ}
\end{align}

\noindent where $J:=(p_1,...,p_n)$ denotes a point in $(\mathcal{C})^n$, $\widehat{J}_i:=(p_1,\ldots \widehat{p_i},\ldots, p_n)$, $J_0:=(p_0,p_1\ldots,p_n)$, and $\sigma(J_0):=(\sigma(p_0),\sigma(p_1),...,\sigma(p_n))$; the $*$ in the sum means that we remove terms involving $\omega_{0,1}$, and $d_p$ is the exterior derivative with respect to $p$.
\end{definition}

Note that the above definition is slightly different from that in \cite{kidwai2023quantum}, in particular, the definition of $\omega_{0,2}$ and $\text{Rec}^\mathscr{Q}_{g,n+1}$. This is merely a difference in convention and they are equivalent as discussed in \cite{Osu23-1}. The following fundamental properties of the multidifferentials $\omega_{g,n}$ were shown in \cite{kidwai2023quantum}:

\begin{theorem}\label{theorem:RTR}
Let $\mathcal{S}^{\bm \mu }$ be a {genus zero degree two refined spectral curve}. For any $(g,n)$ with $2g,n\in\mathbb{Z}_{\geq0}$ and $2g+n-2\geq0$, the multidifferentials $\omega_{g,n+1}$ constructed from the refined topological recursion satisfy the following properties:
\begin{enumerate}
\itemsep2pt 
\item\label{RTR1} The multidifferential $\omega_{g,n+1}$ is symmetric;

\item\label{RTR2} All poles of $\omega_{g,n+1}$ (in any variable) lie in $\mathcal{R}^*\cup\sigma(J_0)$;

\item\label{RTR3} At any $ o\in \mathcal{C}$, $\omega_{g,{n+1}}$ is residue-free in the first (thus, any) variable:
\begin{equation}
    \underset{p=o}{{\rm Res}}\,\omega_{g,n+1}(p,J)=0;
\end{equation}

\item \label{RTR4}  For $n>0$ we have
\begin{equation}
(2g+n-2)\,\omega_{g,n}(J)=-\left(\sum_{r\in\mathcal{R}^*}\underset{p=r}{{\rm Res}}+\sum_{r\in\sigma(J)}\underset{p=r}{{\rm Res}}\right)\Phi(p)\cdot\omega_{g,n+1}(p,J).\label{Rinverse}
\end{equation}

\end{enumerate}

\end{theorem}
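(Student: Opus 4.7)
The plan is to prove properties (1)--(4) by simultaneous induction on $2g+n$, with the base cases at $2g+n=2$ (namely $\omega_{0,3}$, $\omega_{\frac12,2}$, $\omega_{1,1}$) serving as the starting point. The explicit initial data $\omega_{0,1},\omega_{0,2},\omega_{\frac12,1}$ from Definition \ref{def:RTR} are not themselves part of the claim (they lie below the range $2g+n\geq 2$) but are used freely in the recursion \eqref{eq:RTR}. Throughout I would follow the arguments of \cite{kidwai2023quantum,Osu23-1}, adapting them to the minor convention change noted immediately after Definition \ref{def:RTR}.

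I would establish properties (2) and (3) before tackling the more delicate symmetry. For (2), I analyze each residue contribution in the recursion: near $r\in\mathcal{R}^*$ the kernel $\eta_p(p_0)/(2\omega_{0,1}(p))$ produces a pole in $p_0$ only at $r$, and near $r\in\sigma(J_0)$ the pole in $p_0$ is manifest. The nontrivial cases are $r\in\mathcal{R}\setminus \mathcal{R}^*$, where the pole of $\omega_{0,1}$ at $r$ forces $1/\omega_{0,1}(p)$ to have a zero of sufficient order to kill the residue, and $r\in{\mathcal{P}'_{\!\mathsmaller{+}}}$, where the refined term $\mathscr{Q}\, dx(p)\cdot d_p(\omega_{g-\frac12,n+1}/dx)$ supplies exactly the counterterm that cancels the contribution coming from the $\omega_{\frac12,1}$--$\omega_{g,n}$ pieces of $\text{Rec}^{\mathscr{Q}}_{g,n+1}$. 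Property (3) then follows from a direct residue computation using $\eta_p(p_0)=\int_{\sigma(p)}^{p}B(p_0,\cdot)$: the kernel is regular in $p_0$ away from $\mathcal{R}^*$, and at $r\in\mathcal{R}^*$ an integration by parts converts the $p_0$-residue into that of an exact differential. Property (4) is then obtained by multiplying \eqref{eq:RTR} by a local primitive $\Phi$ of $\omega_{0,1}$ and invoking the residue theorem, with the prefactor $2g+n-2$ emerging from a standard degree-counting on $\text{Rec}^{\mathscr{Q}}_{g,n+1}$ and the refined derivative term contributing coherently under integration by parts.

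The main obstacle is property (1). Following the standard strategy, I would establish the exchange symmetry between $p_0$ and $p_1$ by computing both orderings of the recursion, subtracting, and reducing the difference to a sum of residues that vanishes by the inductive hypotheses on lower $(g',n'+1)$ together with properties (2)--(3) already proved at level $(g,n+1)$. The new difficulty compared to the unrefined case lies in handling the term $\mathscr{Q}\, dx(p)\cdot d_p(\omega_{g-\frac12,n+1}(p,J)/dx(p))$: under the exchange one must verify that the exterior derivative in $p$ and the residue manipulations commute, that the contribution from the residues at ${\mathcal{P}'_{\!\mathsmaller{+}}}$ (absent in the unrefined setting) does not obstruct the cancellation, and that the $\mathscr{Q}^2$ pieces arising from $\omega_{\frac12,\cdot}\cdot\omega_{\frac12,\cdot}$ products appear symmetrically in $p_0,p_1$. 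Once this refined bookkeeping is carried through, the residue-shuffling argument of \cite{kidwai2023quantum,Osu23-1} closes the induction.
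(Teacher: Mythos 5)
This theorem is not proved in the paper at all: it is imported verbatim from the authors' earlier work, introduced by the sentence ``The following fundamental properties of the multidifferentials $\omega_{g,n}$ were shown in \cite{kidwai2023quantum}'' (with the convention-change caveat deferred to \cite{Osu23-1}). There is therefore no in-paper argument to compare yours against, and any assessment has to be of your outline on its own terms.

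As an outline, your plan is the standard one and the overall architecture (simultaneous induction on $2g+n$ with base cases $\omega_{0,3},\omega_{\frac12,2},\omega_{1,1}$; pole analysis and residue-freeness first; symmetry by comparing the two orderings of the recursion; the dilaton-type identity (4) by pairing with a primitive $\Phi$ and applying the residue theorem) is surely consistent with what the cited references do. But it is a plan rather than a proof, and the step you defer is exactly the one that constitutes the substance of \cite{kidwai2023quantum,Osu23-1}. Two concrete gaps: first, your treatment of property (2) asserts, without verification, that the $\mathscr{Q}\,dx(p)\cdot d_p(\omega_{g-\frac12,n+1}/dx(p))$ term cancels the would-be poles at ${\mathcal{P}'_{\!\mathsmaller{+}}}$ coming from the $\omega_{\frac12,1}$-type contributions; in the refined setting this cancellation is established via the global loop equation (the paper's Lemma \ref{lem:GLE}, also quoted from \cite{kidwai2023quantum}), which recasts $\omega_{g,n+1}$ as $-{\rm Rec}^{\mathscr{Q}}_{g,n+1}(p_0,J)/2\omega_{0,1}(p_0)$ plus exact terms and is the actual mechanism controlling the poles away from $\mathcal{R}^*\cup\sigma(J_0)$ --- your proposal never engages with it. Second, the symmetry argument is reduced to the statement that ``the refined bookkeeping closes the induction''; in the refined recursion the exchange-of-variables computation does not collapse to the unrefined residue-shuffling because of the derivative term and the extra residues at ${\mathcal{P}'_{\!\mathsmaller{+}}}$, and naming these as difficulties is not the same as resolving them. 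So the proposal correctly identifies the shape of the argument but cannot be credited as a proof of the theorem.
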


In addition to these fundamental properties, the following important lemma, also proved in \cite{kidwai2023quantum}, will be needed to establish the behaviour of various integrals below.
\begin{lemma}[Global loop equations]\label{lem:GLE} Given a {genus zero degree two refined spectral curve}, for any $(g,n)$ with $g,n\in\mathbb{Z}_{>0}$ and $2g+n -2>0$, we have:
    \begin{equation}
    \omega_{g,n+1}(p_0,J)=-\frac{{\rm Rec}^{\mathscr{Q}}_{g,n+1}(p_0,J)}{2\omega_{0,1}(p_0)} +\sum_{i=1}^n d_{p_i}\left(\frac{\eta_{p_i}(p_0)}{2\omega_{0,1}(p_i)}\cdot\omega_{g,n}(J)\right).\label{pole2}
\end{equation}
\end{lemma}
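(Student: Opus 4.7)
The natural approach is to apply the residue theorem on the compact Riemann surface $\mathcal{C}\simeq \mathbf{P}^1$ to the meromorphic $1$-form in $p$
\[
F(p;p_0,J):=\frac{\eta_p(p_0)}{2\omega_{0,1}(p)}\cdot\text{Rec}^{\mathscr{Q}}_{g,n+1}(p,J),
\]
which takes values in the product of cotangent spaces in the remaining variables. Since the sum of residues of $F$ in $p$ over all its poles vanishes, the defining formula \eqref{eq:RTR} expresses $-\omega_{g,n+1}(p_0,J)$ as the residue sum at $\mathcal{R}\cup\sigma(J_0)\cup\mathcal{P}'_{\!\mathsmaller{+}}$, and the residue theorem then identifies $\omega_{g,n+1}(p_0,J)$ with the sum of residues at the complementary poles. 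The plan is to match that with the RHS of \eqref{pole2}.

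First I would catalogue the poles of $F$ in $p$, using the inductive pole structure from Theorem \ref{theorem:RTR} together with the explicit formulas for $\omega_{\frac12,1}$ in \eqref{eq:half1def}, as well as for $\eta_p$ and $B$. These poles lie in $\mathcal{R}\cup\mathcal{P}'\cup J_0\cup\sigma(J_0)$, so the residue theorem yields
\[
\omega_{g,n+1}(p_0,J)=\underset{p=p_0}{\text{Res}}\,F+\sum_{i=1}^n\underset{p=p_i}{\text{Res}}\,F+\sum_{p_*\in\mathcal{P}'_{\!\mathsmaller{-}}}\underset{p=p_*}{\text{Res}}\,F.
\]

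The residues over $J_0$ can be evaluated directly. Near $p=p_0$, the local expansion $\eta_p(p_0)=-dp_0/(p-p_0)+(\text{regular})$ together with the regularity of $\text{Rec}^{\mathscr{Q}}/\omega_{0,1}$ at $p_0$ produces the first term $-\text{Rec}^{\mathscr{Q}}_{g,n+1}(p_0,J)/(2\omega_{0,1}(p_0))$ of \eqref{pole2}. Near $p=p_i$ the only relevant singularity is the bidifferential factor $dx(p)dx(p_i)/(x(p)-x(p_i))^2$ inside $\text{Rec}^{\mathscr{Q}}$, and the standard identity $\text{Res}_{p=p_i}g(p)B(p,p_i)=d_{p_i}g(p_i)$ produces the summand $d_{p_i}(\eta_{p_i}(p_0)\omega_{g,n}(J)/(2\omega_{0,1}(p_i)))$, after rewriting $\omega_{g,n}(p_i,\widehat{J}_i)$ as $\omega_{g,n}(J)$ using the symmetry of $\omega_{g,n}$.

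The main obstacle is showing $\sum_{p_*\in\mathcal{P}'_{\!\mathsmaller{-}}}\text{Res}_{p=p_*}F=0$. Under the hypothesis $g,n\geq 1$, every $\omega_{g',n'+1}$ appearing in $\text{Rec}^{\mathscr{Q}}$ other than the single factor $\omega_{\frac12,1}(p)$ in the stable-splitting term $\omega_{\frac12,1}(p)\,\omega_{g-\frac12,n+1}(p,J)$ has $n'+1\geq 2$, and is therefore regular at $\mathcal{P}'$ by Theorem \ref{theorem:RTR}. The only pole of $\text{Rec}^{\mathscr{Q}}$ at $p_*\in\mathcal{P}'_{\!\mathsmaller{-}}$ is thus the simple pole from $\omega_{\frac12,1}(p)$ (via the term $\mu_{\sigma(p_*)}\eta_{\sigma(p_*)}(p)$ and/or $-dy/y$ in \eqref{eq:half1def}), which is cancelled by the matching zero of $1/\omega_{0,1}$ at $p_*$ (since $p_*\in\mathcal{P}$, $\omega_{0,1}$ has a pole of order $r_*\geq 1$ there). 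Consequently $F$ is regular at each $p_*\in\mathcal{P}'_{\!\mathsmaller{-}}$ and the residue vanishes. Assembling the three contributions yields \eqref{pole2}.
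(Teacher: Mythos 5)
The paper does not reprove this lemma --- it is imported from \cite{kidwai2023quantum,Osu23-1} --- but your strategy (apply the residue theorem on $\mathcal{C}\simeq\mathbf{P}^1$ to the $1$-form $p\mapsto \frac{\eta_p(p_0)}{2\omega_{0,1}(p)}\mathrm{Rec}^{\mathscr{Q}}_{g,n+1}(p,J)$ and move the residue contour to the complementary poles) is exactly the standard route to such global loop equations, and your evaluation of the residues at $p_0$ and at the $p_i$ is correct, including the use of symmetry to rewrite $\omega_{g,n}(p_i,\widehat J_i)=\omega_{g,n}(J)$.

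There are, however, two imprecisions in your analysis of the poles at $\mathcal{P}'$, one of which is a genuine oversight. First, the criterion ``$n'+1\geq 2$, hence regular at $\mathcal{P}'$ by Theorem \ref{theorem:RTR}'' is not the right one: the theorem covers $\omega_{g',n'+1}$ with $2g'+n'-2\geq 0$, so it does apply to the factors $\omega_{g_1,1}(p)$ with $g_1\geq 1$ (which have $n'+1=1$) but does \emph{not} apply to $\omega_{0,2}(p,p_j)=-B(\sigma(p),p_j)$ (which has $n'+1=2$); the latter must be checked by hand (it is regular at $\mathcal{P}'$ for generic $p_j$). Second, and more importantly, $\omega_{\frac12,1}(p)$ is not the only source of a pole of $\mathrm{Rec}^{\mathscr{Q}}$ at $p_*\in\mathcal{P}'_{\!\mathsmaller{-}}$: the term $\mathscr{Q}\,dx(p)\cdot d_p\bigl(\omega_{g-\frac12,n+1}(p,J)/dx(p)\bigr)$ acquires a simple pole at any $p_*$ lying over $x=\infty$ (where $dx$ has a double pole), even though $\omega_{g-\frac12,n+1}$ itself is regular there --- in a local coordinate $t$ with $x=1/t$ one finds $dx\cdot d_p(f\,dt/dx)=\bigl(2f/t+f'\bigr)dt^2$. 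This is precisely the situation for the Weber and Whittaker curves, where $\mathcal{P}'=\{\infty_\pm\}$. The conclusion survives because this extra pole is again at most simple and is absorbed by the zero of $1/\omega_{0,1}$ of order equal to the pole order of $ydx$ at $p_*$ (which is $\geq 1$), so $F$ is still regular on $\mathcal{P}'_{\!\mathsmaller{-}}$ and the residues vanish; but as written your pole inventory is incomplete, and you should add this case before declaring the $\mathcal{P}'_{\!\mathsmaller{-}}$ contribution zero.
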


The fundamental object we are interested in is the so-called free energy. Thanks to the residue-free property \eqref{RTR3}, we can define this ``$\omega_{g,0}$'' for $g>1$ as follows (we will give the definition for $g=0,\frac12,1$ later):
\begin{definition}\label{def:rFg}
For $g>1$, the \emph{genus $g$ free energy $F_g$} of the refined topological recursion is defined by
\begin{equation}
F_g=\frac{1}{2-2g}\sum_{r\in\mathcal{R}^*}\underset{p=r}{\text{Res}}\,\Phi(p)\!\cdot\omega_{g,1}(p),\label{RTRFg}
\end{equation}
where $\Phi$ is any primitive of $ydx$.
\end{definition}

\subsection{Refined quantum curves and Voros coefficients}

\subsubsection{(Refined) quantum curves}

With $\omega_{g,n}$ on hand, we may carefully assemble them into a natural object. The \emph{topological recursion wavefunction} $\psi^{{\rm TR}}(x)$ is defined as:
\begin{equation}\label{eq:wavefn}
    \psi^{\rm TR}(x) := \exp \left( \sum_{g\in \frac{1}{2}\mathbb{Z}_{\geq 0},n>0} \dfrac{\hbar^{2g-2+n}}{\beta^{n/2}} \dfrac{1}{n!} \int_{ D(z;{ {\bm \nu}})}\!\!\!\ldots\int_{D(z;{ {\bm \nu}})}\omega_{g,n}\right)
\end{equation}
which should be understood as the exponential of a formal $\hbar$-series\footnote{Since our definition of $\omega_{0,2}$ is slightly different from the usual convention, we do not need a shift for $\omega_{0,2}$ in the definition of the wave function which commonly appears in literature (e.g. \cite{kidwai2023quantum,BE2})}. Here, we integrate from the rightmost variable to the leftmost, along a complex divisor
\begin{equation}
    D(z)=D(z,{\bm\nu}):=\sum_{p\in{\mathcal{P}}}\nu_p[p], \qquad \sum_{p\in\mathcal{P}}\nu_p=1,
\end{equation}
where integration of any $\omega$ is defined by
\begin{equation}
    \int_{D(z)} \omega \,\,:= \sum_{p\in\mathcal{P}}\nu_p\int_{p}^z \omega 
\end{equation}

The definition of $\psi^{\rm TR}$ relies on the following lemma proved in \cite{kidwai2023quantum}:
\begin{lemma}\label{lem:residue3}
Given a genus zero degree two refined spectral curve, let $p_i \in\mathcal{C}$ for $i=0,\ldots n$ and suppose that $p_i\not\in\mathcal{P}$, and $p_i\neq \sigma(p_j)$ for any $i,j$. Suppose furthermore that lower endpoints $q_i\in {\mathcal{P}}$ are chosen. Then for all $2g+n\geq2$, the integral
\begin{equation}
   \int^{p_0}_{q_0}\cdots\int^{p_{n}}_{q_{n}}\omega_{g,n+1}\label{residue3}
\end{equation}
exists and defines a meromorphic function $F_{g,n+1}$ on $\Sigma$ with respect to each $p_i$, regular at $p_i=p_j$ for any $i,j$, and is independent of the path of integration.

\end{lemma}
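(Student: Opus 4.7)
\medskip

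My plan is to prove this by strong induction on $2g+n$, using the global loop equation (Lemma \ref{lem:GLE}) as the central tool. The starting cases $(g,n+1)\in\{(0,3),(\tfrac12,2),(1,1)\}$ can be handled by direct inspection since $\omega_{0,2}$, $\omega_{\frac12,1}$, and the defining formulas give explicit expressions whose integrals are easily analyzed. For the inductive step with $2g+n\geq 2$, I would carry out the integrations in sequence, starting with the outermost variable $p_0$, and show at each stage that the integrand is a meromorphic $1$--form in the active variable whose residues vanish and whose pole set is controlled.

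For the integration in $p_0$, substitute the global loop equation
\begin{equation*}
    \omega_{g,n+1}(p_0,J)=-\frac{\text{Rec}^{\mathscr{Q}}_{g,n+1}(p_0,J)}{2\omega_{0,1}(p_0)}+\sum_{i=1}^{n}d_{p_i}\!\left(\frac{\eta_{p_i}(p_0)}{2\omega_{0,1}(p_i)}\omega_{g,n}(J)\right).
\end{equation*}
The second group of summands is exact in $p_i$, so after integrating in $p_i$ (at the later stage) it contributes only boundary terms, all of which are controlled by Lemma \ref{lem:GLE}'s ingredients and inductively by smaller-$(g,n)$ cases. The first term, viewed as a $1$--form in $p_0$, has poles only in $\mathcal{R}^*\cup\mathcal{P}\cup\{\sigma(p_j):j\geq1\}$ and, crucially by Theorem \ref{theorem:RTR}\eqref{RTR2}, is free of diagonal poles $p_0=p_j$ for $j\geq1$. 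The residue-free property \eqref{RTR3} then shows that this $1$--form possesses a single-valued meromorphic primitive on $\mathcal{C}\cong\mathbf{P}^1$ with the same pole set. Since $q_0\in\mathcal{P}$ is not a pole (as $\mathcal{R}^*\cap\mathcal{P}=\emptyset$ and $\sigma(\mathcal{P})=\mathcal{P}$ is disjoint from $\{p_j\}$ by hypothesis), the integral $\int_{q_0}^{p_0}$ is well-defined and, by the residue theorem, independent of path in $\mathcal{C}$ minus the pole set.

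The main obstacle is the regularity statement at the diagonals $p_i=p_j$. The dangerous singularities originate in the $B$-type pole of $\omega_{0,2}$, which propagates through the recursion and would naively produce diagonal singularities in $\omega_{g,n+1}$; the key is that the global loop equation precisely captures these as exact pieces $d_{p_i}(\cdots)$, which yield merely boundary contributions upon integration. To finish, I would iterate the argument: integrate $p_0$ first using the above decomposition, then treat the resulting function of the remaining variables $p_1,\ldots,p_n$ as the integral of $\omega_{g,n}$ twisted by explicit regular factors (the boundary terms) and apply the inductive hypothesis. Regularity at $p_0=p_j$ follows because after integration the term $\eta_{p_j}(p_0)$ is manifestly regular there, and the Rec piece was diagonal-free to begin with; regularity at $p_i=p_j$ for $i,j\geq 1$ is inherited from $\omega_{g,n}(J)$ by induction. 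Symmetry of $\omega_{g,n+1}$ (Theorem \ref{theorem:RTR}\eqref{RTR1}) ensures the argument is independent of which variable is integrated first.
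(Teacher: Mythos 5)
You should first be aware that this paper does not actually prove Lemma \ref{lem:residue3}: it is imported from \cite{kidwai2023quantum} (it is Lemma A.5 there, as the proof of Lemma \ref{lem:continuous} later records), so there is no in-text argument to compare against. Your toolkit --- induction on $2g+n$, the global loop equation of Lemma \ref{lem:GLE}, the pole locations \eqref{RTR2} and the residue-free property \eqref{RTR3} --- is certainly the right one, but two of your central steps do not hold as stated. The bookkeeping of diagonal poles in the loop-equation decomposition is backwards: the term $-{\rm Rec}^{\mathscr{Q}}_{g,n+1}(p_0,J)/2\omega_{0,1}(p_0)$ is \emph{not} free of poles at $p_0=p_j$, since it contains the summand $\tfrac{dx(p_0)\,dx(p_j)}{(x(p_0)-x(p_j))^2}\,\omega_{g,n}(p_0,\widehat{J}_j)$ with a manifest double pole on the diagonal, and the exact terms $d_{p_j}\bigl(\eta_{p_j}(p_0)\cdots\bigr)$ carry a matching double pole there coming from $\eta_{p_j}(p_0)=\int_{\sigma(p_j)}^{p_j}B(p_0,\cdot)$. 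Only the \emph{sum} is diagonal-regular, by \eqref{RTR2}; the conclusion of Theorem \ref{theorem:RTR} cannot be invoked for either piece separately. Since your plan is precisely to integrate the two groups of terms separately, you must control each piece on its own --- including convergence at the lower endpoints: you list $\mathcal{P}$ among the possible poles of the Rec piece and then assert, without argument, that the integral from $q_0\in\mathcal{P}$ converges.

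More seriously, the actual analytic content of the lemma --- existence and meromorphy of the \emph{iterated} integral --- is asserted rather than proved. The form $\omega_{g,n+1}$ has poles at $\zeta_i=\sigma(\zeta_j)$; when $\zeta_j$ is an inner integration variable sweeping a path from $q_j$ to $p_j$, these poles sweep out $\sigma(\mathrm{path})$, so the partially integrated object is a priori defined only off that locus and could develop branch points or logarithmic singularities at $\zeta_i=\sigma(p_j)$ and $\zeta_i=\sigma(q_j)$. One must show, at \emph{each} stage of the iteration, that the partial integral extends to a single-valued meromorphic function (using \eqref{RTR3} to move the inner contour across $\sigma(\zeta_i)$ without changing the value, and the vanishing of the relevant residues to exclude logarithms at the endpoint images), and that the result is again residue-free in the next variable so that the next integration is path-independent. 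Your induction runs over $(g,n)$ through the loop equation, but the missing induction is over the number of integrations already performed --- exactly the role played by the intermediate quantities $I^{(k)}_{g,n+1}$ in the proof of Lemma \ref{lem:continuous}. Without that second layer, the claims ``regular at $p_i=p_j$'' and ``independent of the path of integration'' do not follow from what you have written.
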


\begin{remark}[Crucial remark] \label{rem:crucial}
      This integral is in fact not guaranteed to be continuous in $p_i$ at the excluded points, even if it exists. Thus, we have said that \eqref{residue3} \emph{defines} a meromorphic function $F_{g,n+1}$, with the understanding that it is given by a rational expression whose value is given by the integral \eqref{residue3} on a dense open subset of $\mathcal{C}^{n+1}$. We note that due to this, for some special points the integral signs must strictly speaking be interpreted as a limit of the meromorphic function $F_{g,n+1}$. This subtle point turns out to cause a crucial difficulty which separates the examples treated in this paper from the examples with second order poles, and will matter in \S\ref{sec:main} below when we consider the Voros coefficient. We treat the Weber and Whittaker curves in this paper precisely because in this case, the integral actually \emph{is} continuous where needed, and the wavefunction can be interpreted as the genuine integrals in \eqref{eq:wavefn}.
\end{remark}

Roughly speaking, a \emph{quantum curve} is a differential operator on $X$ which annihilates $\psi^{\rm TR}$:
\begin{equation}
    \mathcal{D}_\hbar(\bm \nu)\psi^{\rm TR} = 0
\end{equation}
More precisely, let $\mathcal{S}^{\bm \mu}=(\mathcal{C},x,y,B,D({\bm \mu}))$ be a (smooth) genus zero degree two refined spectral curve. Given a divisor $D(z)$ as above,
a quantum curve is an ODE with formal solution $\psi^{\rm \mathsmaller{TR}}(x) = \varphi^{\rm \mathsmaller{TR}}(z(x))$:
\begin{equation}
\left(\hbar^{2}\dfrac{d^2}{dx^2}+q(x,\hbar)\hbar \dfrac{d}{dx}+r(x,\hbar)\right)\psi^{\rm \mathsmaller{TR}}(x)=0,
\end{equation}  
for any choice of inverse $z(x)$, such that $q(x,\hbar)$, $r(x,\hbar)$ are (finite) formal $\hbar$-series of rational functions in $x$
\begin{equation}
    q(x,\hbar)=q_0(x)+\hbar q_1(x), \quad r(x,\hbar) = r_0(x) + \hbar r_1(x) + \hbar^2 r_2(x).
\end{equation}
and the semiclassical limit ($\beta^{\frac12}\hbar\frac{d}{dx}\rightarrow y$ followed by $\hbar\rightarrow 0$) recovers the algebraic equation satisfied by the functions $x$ and $y$.

\subsection{Spectral curves of hypergeometric type}
In \cite{kidwai2023quantum} the refined topological recursion for genus zero degree two refined spectral curves was formulated and established, but in the present paper we are concerned with some explicit examples. In practice, these arise from a quadratic differential, that is, a meromorphic section of $\omega_X^{\otimes 2}$, where $X$ is a compact Riemann surface. Any such $\varphi$ has the usual notion of poles, zeroes, and their orders by passing to a local coordinate chart. The set of poles is denoted $P$, and the set of \emph{turning points} (simple zeroes and simple poles) is denoted $T$.

Given a (reasonable) quadratic differential $\varphi$, we may construct the so-called \emph{spectral cover}. First, we form
\begin{equation}
    \Sigma=\left\{\lambda\,|\, \lambda^2=\pi^*\varphi\right\}\subset T^*X
\end{equation}

\noindent which is a smooth subvariety of the cotangent bundle. We equip $\Sigma$ with the restrictions of the projection map $\pi:T^*X \rightarrow X$ and the tautological one-form $\lambda$. If $x$ denotes a coordinate on $X$, and $y$ the corresponding fibre coordinate, we may write this as
\begin{equation}\label{eq:hyperelliptic}
    y^2=Q(x).
\end{equation}
as a local equation in $T^*X$. The corresponding compact Riemann surface is denoted $\overline{\Sigma}$, and $\pi$, $\lambda$ extend to a branched double covering and meromorphic one-form, respectively; we refer to this data $(\overline{\Sigma},\pi,\lambda)$ as the spectral cover.

Although this construction is very general, in the present paper, we will be concerned only with the following four examples:\medskip
\begin{align}
    &\varphi_{\rm Web}=\left(\frac{x^2}{4}-m\right)dx^2&\hspace{-1cm} &\varphi_{\rm Whi}=\frac{x+4m}{4x}dx^2\\ &\varphi_{\rm dBes}= \frac{1}{4x}dx^2 &\hspace{-1cm} &\varphi_{\rm Ai}= x dx^2
\end{align}
where $x$ is a coordinate on $X=\mathbf{P}^1$ and $m\neq 0$ is a complex parameter. The refined spectral curve in the sense of topological recursion is then given by $\overline{\Sigma}$ together with the functions $x:=\pi$ and $y:=\lambda/dx$ (and the canonical choice \eqref{eq:bidiff} for $B$). The set of poles of $\lambda$ is then exactly $\mathcal{P}=\pi^{-1}(P\setminus T)$. It is easy to verify that the curves above together with a choice of divisor $D({\bm \mu}) = \sum_{p\in\mathcal{P}'}\mu_p [p]$ are all examples of genus zero degree two refined spectral curves, so that the results of \cite{kidwai2023quantum} may be applied. There are several other examples, corresponding to those in Figure \ref{fig:confdiag} in which the differential also has second-order poles, written out explicitly in e.g. \cite{kidwai2023quantum}. We call these examples the \emph{(refined) spectral curves of hypergeometric type}, although in this paper we only consider the four above, which we call the Weber, Whittaker, degenerate Bessel, and Airy curves, respectively.

We note that the Weber and Whittaker curves come in a one-dimensional family parametrized by a complex number $m\neq 0$, called the \emph{mass}. The Airy and degenerate Bessel curves do not contain any parameters. We will label points in $\mathcal{P}$ by their image under $x$ and a subscript $\pm$ with the convention that
\begin{equation}\label{eq:signconv}
     \underset{p=\infty_\pm}{{\rm Res}}ydx(p)=\pm m
\end{equation}
Furthermore, the the Weber and Whittaker refined spectral curves come with one parameter $\mu$ associated to the choice $\mathcal{P}'_{\!\mathsmaller{+}
}=\{\infty_+\}$.

\subsubsection{Refined quantum curves of hypergeometric type}
One of our goals is to determine certain invariants of differential equations obtained via the refined topological recursion. These equations were determined in \cite{kidwai2023quantum}, and can be summarized as follows:
\begin{theorem}[\cite{kidwai2023quantum}]\label{theorem:main}
For all spectral curves of hypergeometric type, the refined quantum curve exists and can be explicitly computed. In particular, we have:\newline
Weber:
\begin{equation}
\left(
\epsilon_1^2 \dfrac{d^2}{dx^2}  -\frac{x^2}{4  }  +m+ \frac{ \epsilon_2\nu-(\epsilon_1\!\!+\!\epsilon_2)\mu}{2} \right) \psi^{\rm TR} (x) =0
\end{equation}
Whittaker:\footnote{We have made a gauge transformation to eliminate the $\frac{d}{dx}$ term in comparison with \cite{kidwai2023quantum}.}


\begin{equation}
   \left( \epsilon_1^2\frac{d^2}{dx^2}-\frac14-\frac{m}{x}-\frac{\epsilon_1\nu-(\epsilon_1+\epsilon_2)\mu}{2x}+\frac{\epsilon_1^2}{4x^2}-\frac{(\epsilon_1+\epsilon_2)^2}{4x^2}\right)\psi^{\rm TR}(x)=0
\end{equation}
 Degenerate Bessel:
\begin{equation}
    \left(\epsilon_1^2\frac{d^2}{dx^2}-\frac{1}{4x}\right)\psi^{\rm TR}(x)=0
\end{equation}
Airy:
\begin{equation}
    \left(\epsilon_1^2\frac{d^2}{dx^2}-x\right)\psi^{\rm TR}(x)=0
    \end{equation}
where we have written $\nu=\nu_{\infty_+}\!\! - \nu_{\infty_-}$, $\epsilon_1=\hbar\beta^{\frac12}$, $\epsilon_2=-\hbar \beta^{-\frac12}$. 
\end{theorem}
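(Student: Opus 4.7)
The plan is to start from the definition of $\psi^{\rm TR}$ in \eqref{eq:wavefn}, write it in WKB form, and convert the refined topological recursion into a Riccati-type equation for the logarithmic derivative that matches the putative quantum curve. Concretely, fix a local inverse $z(x)$ and define
\begin{equation}
S(x,\hbar) := \sum_{g\in\frac12\mathbb{Z}_{\geq 0},\,n\geq 1}\frac{\hbar^{2g-2+n}}{n!\,\beta^{n/2}} \int_{D(z)}\!\!\!\cdots\int_{D(z)}\omega_{g,n}(z,\cdot,\ldots,\cdot),
\end{equation}
so that $\psi^{\rm TR}=\exp S$. Substituting $\psi^{\rm TR}=e^S$ into the putative operator $\hbar^2\partial_x^2+q(x,\hbar)\hbar\partial_x+r(x,\hbar)$ shows that the quantum curve equation is equivalent to the Riccati-type identity
\begin{equation}
\hbar^2\bigl((\partial_x S)^2+\partial_x^2 S\bigr)+q(x,\hbar)\hbar\,\partial_x S+r(x,\hbar)=0.
\end{equation}
So the task reduces to computing $\partial_x S$ and $\partial_x^2 S$ order by order in $\hbar$ and checking this identity holds with the stated rational coefficients $q_i$, $r_i$, each of which contains only finitely many terms.

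The key computational input is the refined global loop equation (Lemma \ref{lem:GLE}), which when integrated along $D(z)^n$ and specialized at $p_0=z$ yields a closed relation between the principal specializations of $\omega_{g,n+1}$ and the quadratic/derivative combinations of lower-order $\omega_{g',n'}$. More precisely, set $F_{g,n+1}(z)=\int_{D(z)}\cdots\int_{D(z)}\omega_{g,n+1}$ and differentiate in $x$; the right-hand side of \eqref{pole2}, divided by $dx$, becomes a quadratic expression in $\partial_x F_{g',n'}$ plus a single refined correction proportional to $\mathscr{Q}\,\partial_x(\omega_{g-\frac12,n+1}/dx)$. Summing the resulting identities weighted by $\hbar^{2g-2+n}/(n!\,\beta^{n/2})$ and using Lemma \ref{lem:residue3} (plus the fact that no second order poles are present, so the integrals are genuinely continuous in $z$ at the relevant endpoints) collapses the infinite sum into exactly the Riccati form above. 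The leading $\hbar^0$ piece reproduces $y^2=Q(x)$ by construction, giving $r_0(x)=Q(x)$; the next $\hbar^1$ piece involves $\omega_{\frac12,1}$ and $\omega_{0,2}$ and fixes $q_0,q_1,r_1$; the $\hbar^2$ piece involves $\omega_{1,1}$ and fixes $r_2$. The cancellation of all higher powers of $\hbar$ is then a consequence of the refined loop equation together with property \eqref{RTR2}, because the meromorphic right-hand side can only have poles at the ramification and pole divisors, while the explicit rational $q,r$ account for all of them.

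The bulk of the work is to carry this out case by case for the four curves, using the explicit pole structure. For Weber, $Q(x)=\frac14 x^2-m$, ramification at $x=\pm 2\sqrt{m}$, and $\mathcal{P}=\{\infty_\pm\}$; for Whittaker, $Q(x)=(x+4m)/(4x)$ with ramification at $x=-4m$ and $\mathcal{P}=\{0,\infty_\pm\}$; the two simpler curves follow as degenerations. In each case one computes $\omega_{\frac12,1}$ from \eqref{eq:half1def} explicitly (the sum over ${\mathcal{P}'_{\!\mathsmaller{+}}}$ collapses to a single residue), reads off the contribution to $q_1$ and the shift in $r_1$ proportional to $(\epsilon_1+\epsilon_2)\mu$, and computes $\omega_{1,1}$ via a single residue to read off $r_2$. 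The semiclassical limit $\beta^{\frac12}\hbar\,\partial_x\to y$, $\hbar\to 0$ then manifestly returns \eqref{eq:ysquaredQ}. The Whittaker case additionally needs the indicated gauge transformation $\psi\mapsto x^{\alpha}\psi$ with $\alpha$ determined by the coefficient of $1/x$ in the original $q$, converting the operator to the stated self-adjoint form.

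The main obstacle is justifying that the principal specialization of the loop equation really does close into a finite-order ODE — equivalently, that all putative higher-order corrections $r_k$ with $k\geq 3$ vanish. This is the point at which the refined case is genuinely more delicate than the unrefined one: the $\omega_{\frac12,1}$ term in the recursion \eqref{RecQ} generates new contributions whose cancellation relies on the absence of second-order poles (so that the integrals defining $S$ are continuous at the endpoints, cf.\ Remark \ref{rem:crucial}) and on the specific algebraic form of $y$ for these four curves. Once this vanishing is verified, the explicit form of $q(x,\hbar)$ and $r(x,\hbar)$ stated in Theorem \ref{theorem:main} is read off directly, and the semiclassical limit check is immediate.
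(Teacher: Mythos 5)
First, a point of comparison: the paper does not prove this statement at all --- Theorem \ref{theorem:main} is imported verbatim from \cite{kidwai2023quantum}, so there is no in-paper proof to match against. Your overall strategy (write $\psi^{\rm TR}=e^{S}$, reduce the putative ODE to a Riccati identity for $\partial_x S$, and sum the $D(z)$-integrated global loop equations to close the system) is the standard route and is essentially the method of the cited work. But as written your sketch has two genuine problems.

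The crucial step is asserted rather than carried out. The entire content of the theorem is that the summed, integrated loop equations close into a second-order operator whose coefficients $q(x,\hbar),r(x,\hbar)$ are \emph{finite} $\hbar$-polynomials of rational functions; saying this "is a consequence of the refined loop equation together with property \eqref{RTR2}" is the statement to be proved, not an argument. In particular you do not address the diagonal specializations: $\omega_{0,2}(p_0,p_1)=-B(\sigma(p_0),p_1)$ and the residues over $\sigma(J_0)$ in \eqref{eq:RTR} produce poles at coinciding arguments, and the total-derivative terms $\sum_i d_{p_i}(\cdots)$ in \eqref{pole2} contribute boundary terms at the endpoints of the divisor $D(z;\bm\nu)$; it is precisely these boundary terms that generate the $\nu$-dependent shifts visible in the stated operators (e.g.\ $\tfrac{\epsilon_2\nu}{2}$ for Weber), so they cannot be waved away.

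More seriously, you tie the closure of the ODE to the absence of second-order poles, invoking Remark \ref{rem:crucial}. This is a misattribution that would, if taken at face value, make your argument fail to prove the theorem as stated: the theorem asserts existence of the refined quantum curve for \emph{all} nine hypergeometric-type curves, including Gauss, Kummer, Legendre and Bessel, all of which have second-order poles. The continuity issue of Remark \ref{rem:crucial} and Lemma \ref{lem:continuous} is a separate difficulty affecting the difference relation for the Voros coefficients (Proposition \ref{prop:diffreln}) in the present paper; it plays no role in the existence or explicit form of the quantum curve, which only requires the meromorphy statement of Lemma \ref{lem:residue3}. You should decouple these two issues and supply the actual order-by-order bookkeeping for the four curves.
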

\noindent Expressions for other hypergeometric type curves can be found in \cite{kidwai2023quantum}.

\subsubsection{Cycles on the spectral curve}
\label{section:cycle-path}
Voros coefficients are certain invariants of the quantum curve which are obtained by integrating a certain (formal $\hbar$-series of) one forms along various cycles. We will not give a general definition to avoid introducing excess notation, and simply allow $\gamma$ to denote the generator of $\Gamma$ such that
\begin{equation}
    \int_\gamma ydx = 2\pi i m
\end{equation}
The other important (relative) cycle will be given by a path $\alpha$ on $\widetilde{\Sigma}$ which goes from $\infty_-$ to $\infty_+$ avoiding ramification points, considered as a cycle $\alpha \in H_1(\overline{\Sigma},\pi^{-1}\{\infty\},\mathbb{Z})$. Though only the class of the path (for which there is only one choice) matters, we will often use $\alpha$ to denote some chosen representative (avoiding the ramification points). It is clear that both $\gamma$ and $\alpha$ are anti-invariant with respect to the involution $\sigma$.

\subsubsection{Voros coefficients}

Equipped with the quantum curve, we may ask for its invariants. One natural type of invariant is a formal series in $\hbar$ called the \emph{Voros coefficient}, sometimes known as a \emph{quantum period}, since its leading term is the period of $ydx$ on the spectral curve.

Let $T$ denote the log-derivative of $\varphi=\psi \circ x$. We consider its ``odd part":
\begin{equation}
T_{\rm odd}(z, \hbar)dz := 
\frac{T(z,  \hbar) - \sigma^\ast T(z,  \hbar)}{2}dz,
\end{equation}
where $\sigma$ is the covering involution of $\overline{\Sigma}$. This is a formal series of one-forms whose coefficients are meromorphic on $\overline{\Sigma}$, satisfying 

\begin{equation} \label{anti-invariance-dSodd}
\sigma^\ast T_{\rm odd}(z, \hbar) 
= - T_{\rm odd}(z, \hbar).
\end{equation}

To avoid divergent integrals, we denote the same formal series with its first two terms truncated as:
\begin{equation}T^{\geq 1}_{
\rm odd}(z,\hbar):=
T_{\rm odd}(z, \hbar) - 
\dfrac{T^{(-1)}_{\rm odd}(z)}{\hbar} - T^{(0)}_{\rm odd}(z).
\end{equation}

\begin{definition}
~
\begin{itemize}
\item 
The \emph{Voros coefficient for the cycle $\gamma$} of the quantum curve is a formal series of $\hbar$ defined by the term-wise integral:
\begin{equation}  \label{def-Voros-closed}
V_{\gamma}(\hbar)
:= \oint_{\gamma} T_{\rm odd}(z, \hbar)dz.
\end{equation}

\item 
The \emph{Voros coefficient for the path $\alpha$} of the quantum curve is a formal series of $\hbar$ defined by the term-wise integral:
\begin{equation}\label{def-Voros-relative}
V_\alpha(\hbar):= \int_\alpha T^{\geq 1}_{\rm odd}(z,\hbar)dz
\end{equation}
\end{itemize}
For the degenerate Bessel and Airy equations, no Voros coefficients are defined since the homology groups are trivial.
\end{definition}
Since the integrand has poles only at ramification points and is anti-invariant, $V_\alpha$ is well-defined independent of the choice of precise path, as long as the relative homology class of $\alpha$ in $H_1(\overline{\Sigma},\pi^{-1}(\infty),\mathbb{Z})$ remains fixed (and similarly for $V_\gamma$).
\subsubsection{Variational formula}
We will show below a relation between the free energy $F_g$ and and Voros coefficients $V_{\beta}$, as a generalisation of results shown in \cite{iwaki2023voros,iwaki2019voros}. The crucial ingredient is the so-called \emph{variational formula} proved in \cite{Osu23-2} as a refined analogue of the formula in \cite{EO}. 

Let us first fix some notation. For simplicity, we only state the formula for the Weber and Whittaker curve, though analogous results hold for other hypergeometric curves\footnote{There is no variational formula for degenerate Bessel and Airy, since there is no mass parameter.}. We denote by $\infty_\pm\in{\mathcal{P}}$ the (unique two) poles of $\lambda$, which satisfy $\pi(\infty_{\pm})=\infty$, and the sign convention in \eqref{eq:signconv} We denote by $\alpha$ any oriented path which begins at $\infty_-$ and ends at $\infty_+$, avoiding ramification points. Then, $F_0,F_\frac12,F_1$ for the Weber and Whittaker curve are respectively defined as follows.
\begin{definition}\label{def:Funstable}
    Given a refined spectral curve of the Weber  or Whittaker type, $F_0,F_\frac12,F_1$ are defined as a solution of the following differential equations:
    \begin{align}
        \frac{\partial^3 F_{0}}{\partial  m^3}=\int_{\alpha}\int_{\alpha}\int_{\alpha}\omega_{0,3},\quad  \frac{\partial^2F_{\frac12}}{\partial  m^2}=\int_{\alpha}\int_{\alpha}\omega_{\frac12,2},\quad  \frac{\partial F_{1}}{\partial  m}=\int_{\alpha}\omega_{1,1}.
    \end{align}
\end{definition}
Note that $F_0,F_\frac12,F_1$ comes with a polynomial ambiguity of $m$ of degree $2-2g$ which can be chosen arbitrarily. The variational formula states that an analogous relation holds for all $\omega_{g,n}$:

\begin{proposition}[\cite{Osu23-2}]\label{theorem:variation} Let $F_g$ denote the free energy of the refined topological recursion for the Weber or Whittaker curves. For any $2g+n-2\geq 1$ we have the following relation:
\begin{equation}
\frac{\partial^n F_{g}}{\partial  m^n}=\int_\alpha\cdots\int_\alpha\,\omega_{g,n}.\label{Fvariation}
\end{equation}
\end{proposition}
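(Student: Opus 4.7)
The plan is to use double induction. The outer induction is on $n$ for fixed $g$ (or equivalently on $2g+n$), while the workhorse is a variational formula for the multidifferentials themselves:
\begin{equation}
\frac{\partial \omega_{g,n}}{\partial m}(J) = \int_\alpha \omega_{g,n+1}(J,\cdot), \qquad 2g+n \geq 1. \tag{$\star$}
\end{equation}
Assuming $(\star)$, the proposition follows: the cases $(g,n) = (0,3), (\tfrac12,2), (1,1)$ are built into Definition \ref{def:Funstable}, and for larger $n$ the induction step is carried out by commuting $\partial/\partial m$ past the outermost integral sign and applying $(\star)$ to the integrand. The remaining base case, $g \geq \tfrac32$ with $n=1$ (i.e.\ $\partial_m F_g = \int_\alpha \omega_{g,1}$), is obtained by differentiating the residue definition \eqref{RTRFg} in $m$: the derivative of $\Phi$ contributes a meromorphic form whose $m$-dependence comes via $\omega_{0,1}$, while $\partial_m\omega_{g,1}$ is handled by $(\star)$ at $(g,1)$, and the two pieces can be repackaged using property \eqref{RTR4} with $n=1$ to give exactly $\int_\alpha\omega_{g,1}$.

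The proof of $(\star)$ itself proceeds by an inner induction on $2g+n$. The starting points $(g,n) = (0,1)$ and $(\tfrac12,1)$ are checked by direct computation: using $y^2 = Q(x,m)$ one sees that $\partial_m(y\,dx)$ is a meromorphic differential on $\overline{\Sigma}$ whose singular part and residues at $\infty_\pm$ match those of $\int_\alpha B(p,\cdot) = -\int_\alpha\omega_{0,2}(\sigma(p),\cdot)$, and a similar check using the explicit formula \eqref{eq:half1def} handles $\omega_{\frac12,1}$. For the inductive step I would apply $\partial/\partial m$ to the global loop equation \eqref{pole2}: every multidifferential $\omega_{g',n'}$ appearing inside $\text{Rec}^\mathscr{Q}_{g,n+1}$ has $2g'+n' < 2g+n+1$, so the inductive hypothesis replaces each such factor by an integral along $\alpha$ of the next-level object, while the derivative of $1/\omega_{0,1}$ in the kernel contributes an extra term which, via the base case $(0,1)$ of $(\star)$, reproduces exactly the missing $\omega_{0,2}\cdot\omega_{g,n}$ contribution in $\text{Rec}^\mathscr{Q}_{g,n+2}$. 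Reassembling and reapplying \eqref{pole2} at $(g,n+1)$ yields $\int_\alpha\omega_{g,n+2}$, which is $(\star)$ at the next level.

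The main technical obstacle is the one flagged in Remark \ref{rem:crucial}: justifying the interchange of $\partial/\partial m$ with the path integral $\int_\alpha$. For this one needs to show that as $m$ varies in a small neighbourhood, neither the ramification points nor the singular locus of the integrand collide with $\alpha$, and that the integrand is uniformly bounded in $m$ on compact pieces of $\alpha$ so that differentiation under the integral sign is legitimate. Property \eqref{RTR2}—stating that the poles of $\omega_{g,n+1}$ lie only in $\mathcal{R}^*\cup\sigma(J_0)$—together with the fact that for the Weber and Whittaker curves the endpoints $\infty_\pm$ of $\alpha$ are poles of $y\,dx$ but not ramification points (there being no second-order poles of the quadratic differential) supplies exactly the uniform boundedness required. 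This is precisely the step that breaks down once second-order poles are permitted, explaining why the variational identity, and therefore the formula in the proposition, is restricted here to the Weber and Whittaker curves.
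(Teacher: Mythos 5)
The first thing to note is that the paper does not prove this proposition at all: it is imported verbatim from \cite{Osu23-2} (the citation is in the proposition header, and the surrounding text says the variational formula is ``proved in \cite{Osu23-2}''), so there is no in-paper argument to compare yours against. Judged on its own terms, your skeleton is the standard Eynard--Orantin variational argument (prove $\partial_m\omega_{g,n}=\int_\alpha\omega_{g,n+1}$ by induction on $2g+n$, then integrate up to $F_g$ using the dilaton-type relation \eqref{Rinverse}), and this is almost certainly the right general strategy for the refined case as well. The base cases you identify are the correct ones, and the observation that the derivative of the kernel $1/\omega_{0,1}$ must regenerate the $\omega_{0,2}\cdot\omega_{g,n}$ term at the next level is the correct mechanism.

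There are, however, two concrete gaps. First, you have not addressed what is held fixed under $\partial/\partial m$. The ramification points of the Weber curve sit at $x=\pm2\sqrt{m}$ and move with $m$, so the residue contours in \eqref{eq:RTR} and \eqref{RTRFg} move as well; one must either work in a uniformizing coordinate on $\overline{\Sigma}$ in which $\mathcal{R}$ is fixed (so that $x$, $y$, and hence $\eta_p$ and the kernel all acquire $m$-dependence) or carefully track the boundary terms from the moving contours. Your base-case check of $(\star)$ implicitly differentiates at fixed $x$, while your differentiation of the recursion implicitly assumes fixed residue locations; these must be reconciled. Second, and more seriously, your justification of the interchange of $\partial_m$ with $\int_\alpha$ misidentifies the danger. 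By property \eqref{RTR2} the poles of $\omega_{g,n}$ lie in $\mathcal{R}^*\cup\sigma(J_0)$; since $\sigma$ exchanges the two endpoints $\infty_\pm$ of $\alpha$, in the iterated integral $\int_\alpha\cdots\int_\alpha\omega_{g,n}$ two integration variables can collide with a pole along $p_i=\sigma(p_j)$ precisely when one tends to $\infty_+$ and another to $\infty_-$ (and, depending on the representative of $\alpha$, also in the interior where $\alpha$ meets $\sigma(\alpha)$). The fact that $\infty_\pm$ are not ramification points does not by itself control this; it is exactly the kind of degeneration that the paper's Lemma \ref{lem:continuous} is designed to handle in the closely related wavefunction integrals, and an analogous uniform-boundedness estimate is needed here before dominated convergence can be invoked. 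Without it, both the well-definedness of the right-hand side of \eqref{Fvariation} for $n\geq 2$ and the inductive step of your argument are unsupported.
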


For the Weber curve, $F_0,F_\frac12,F_1$ can be explicitly computed as
\begin{align}
    F_0&=\frac{1}{2} m^2 \log (m)-\frac{3 m^2}{4},\label{F0Web}\\
F_{\frac12}&=\frac12\mu\mathscr{Q} m\log m - \frac12\mu\mathscr{Q}m,\label{F1/2Web}\\
 F_1&=-\frac{2+(1-3\mu^2)\mathscr{Q}^2}{24}\log m,\label{F1Web}
\end{align}
where we made a convenient choice for the polynomial ambiguity. Similarly, for the Whittaker curve, we have:
\begin{align}
    F_0&=m^2 \log (m)-\frac{3 m^2}{2},\\
F_{\frac12}&=\mu\mathscr{Q} m\log m - \mu\mathscr{Q}m,\\
 F_1&=-\frac{2+(2-3\mu^2)\mathscr{Q}^2}{12}\log m,
\end{align}
For the Airy curve and the degenerate Bessel curve, there is no parameter in the spectral curve, and we set $F_0=F_\frac12=F_1=0$ in this case.

\section{Refined BPS structures from quadratic differentials}
\subsection{Refined BPS structures}
Our main result will show that the free energies and Voros coefficients can be written in terms of a refined BPS structure associated to the quadratic differential, together with an extra $\hbar$-dependent piece of data. The former is defined as follows:
\begin{definition}
 A \emph{refined BPS structure} is a tuple $(\Gamma,Z,\Omega)$ of the following data:
\begin{itemize}
\item a free abelian group of finite rank $\Gamma$ equipped with an antisymmetric pairing 
\begin{equation}
\langle \cdot , \cdot \rangle : \Gamma \times \Gamma \rightarrow \mathbb{Z},
\end{equation}
\item a homomorphism of abelian groups $Z:= \Gamma \rightarrow \mathbb{C}$, and
\item a map of sets $\Omega : \Gamma \to {\mathbb Q}[t,t^{-1}]$,
\begin{equation}
    \Omega(\gamma)=\sum_{n\in \mathbb{Z}}\Omega_n(\gamma)\cdot t^n
\end{equation}
where $t$ is a formal symbol,
\end{itemize}
satisfying the conditions
\begin{itemize}
\item \emph{Symmetry}: $\Omega(\gamma)=\Omega(-\gamma)$ for all $\gamma \in \Gamma$, and $\Omega(0)=0$.
\item \emph{Support property}: for some (equivalently, any) choice of norm $\|\cdot\|$ on 
$\Gamma \otimes \mathbb{R}$, there is some $C>0$ such that
\begin{equation} \label{support-property}
\Omega(\gamma)\neq  0 \implies |Z(\gamma)|>C\cdot \|\gamma\|.
\end{equation}
\end{itemize}
We call $\Gamma$ the \emph{charge lattice}. The homomorphism $Z$ is called the \emph{central charge}. 
The  $\Omega(\gamma)$ are called the \emph{(refined) BPS invariants} of the refined BPS structure. 

\end{definition}

Note that when $\Omega$ is concentrated entirely at $n=0$ (that is, $\Omega_n=0$ for all $n\neq0$ so $\Omega$ is valued in $\mathbb{Q}$), we recover the ordinary notion of BPS structure \cite{Bri19}. 
We refer to an element $\gamma \in \Gamma$ which has 
nonzero BPS invariant, $\Omega(\gamma)\neq 0$, as an \emph{active class}. We note that the central charge $Z(\gamma)$ for an active class $\gamma$ is always nonzero 
due to the support property. 




We can also consider certain classes of refined BPS structures with particularly nice properties.
A refined BPS structure $(\Gamma, Z, \Omega)$ is said to be
\begin{itemize} 
\item 
\emph{finite} if there are only finitely many active classes,
\item
\emph{uncoupled} if $\langle \gamma_{1},\gamma_{2}\rangle=0$ holds whenever 
$\gamma_{1},\gamma_{2}$ are both active classes (otherwise, it is \emph{coupled})
\item \emph{palindromic} if $\Omega_n(\gamma)=\Omega_{-n}(\gamma)$ for all $\gamma\in\Gamma$, $n\in \mathbb{Z}$.
\item
\emph{integral} if $\Omega_n$ take values in ${\mathbb Z}$.

\end{itemize}

All the examples related to hypergeometric type curves are associated finite, uncoupled, integral refined BPS structures.

 In anticipation of its role later, we also introduce the notion of an $\hbar$-correction to the central charge.
\begin{definition} Given a BPS structure $(\Gamma,Z,\Omega)$, a \emph{quantum correction to the central charge} (or \emph{quantum correction} for short) will be a choice of homomorphism 
    \begin{equation}
        \zbar:\Gamma\rightarrow \mathbb{C}\llbracket \hbar\rrbracket
    \end{equation}
\end{definition}
We call the combination $\widehat{Z}:=Z+\hbar\,{\zbar}$ the \emph{quantum central charge}.

\subsection{BPS structures from quadratic differentials}\label{sec:bpsfromqd}


In previous joint work of the first named author \cite{IWAKI2022108191}, a generalized Gaiotto-Moore-Neitzke method was used to construct BPS structures from the quadratic differentials in Figure \ref{fig:confdiag}. Here we briefly recall the construction based on spectral networks associated with these quadratic differentials, before generalizing to the refined case and proposing a modified construction suited to our purposes. We note that the construction of BPS structures given here can be generalized significantly \cite{GAIOTTO2013239}, though there are technical issues involved.

Given a meromorphic quadratic differential $\varphi$, recall that we have the spectral cover
\begin{equation}
    \Sigma=\{\lambda\,|\, \lambda^2=\pi^*\varphi\}\subset T^*X.
\end{equation}
We will furthermore ``fill-in'' (partial compactification) the points corresponding to simple poles of $\varphi$ and denote the resulting curve by $\widetilde{\Sigma}$. We equip $\widetilde{\Sigma}$ with the restrictions of the projection map $\pi:T^*X \rightarrow X$ and the tautological one-form $\lambda$ as before. 

\subsubsection{Lattice and central charge} 
Recall that a BPS structure requires a charge lattice $\Gamma$ and a central charge $Z$. Following \cite{GAIOTTO2013239,bridgeland2015quadratic}, we define:
\begin{definition} \label{def:central-charge}
The charge lattice $\Gamma$ associated to $\varphi$ is the sublattice 
of $H_1(\widetilde{\Sigma}, {\mathbb Z})$ defined by
\begin{equation}
\Gamma := \{ \gamma \in H_1(\widetilde{\Sigma}, {\mathbb Z}) 
~|~ \sigma_\ast \gamma = - \gamma \}
\end{equation}
equipped with the intersection pairing $\langle \cdot , \cdot \rangle : \Gamma \times \Gamma \to {\mathbb Z}$. The central charge $Z : \Gamma \to {\mathbb C}$ is the period integral: 
\begin{equation}
Z(\gamma) := \oint_{\gamma} \lambda.
\end{equation}
\end{definition}

 The lattice $\Gamma$ is called the hat-homology group in \cite{bridgeland2015quadratic}. For the Weber and Whittaker curves it is of rank $1$ generated by $\gamma$, and for degenerate Bessel and Airy it is trivial (rank $0$). Since $\overline{\Sigma}$ is of genus zero, the intersection pairing $\langle \cdot , \cdot \rangle$ is always trivial in our examples. As a result, the BPS structures we will obtain are a fortiori uncoupled. 

\subsubsection{Spectral networks}
To define the function $\Omega$, we use the notion of a \emph{(WKB) spectral network} \cite{Gaiotto:2012rg} associated to the quadratic differential $\varphi$\footnote{A more general construction can be made for arbitrary tuples of meromorphic $k$-differentials, but it is more complicated. We only consider BPS structures arising from quadratic differentials in this paper, so we omit the details.}. 
In this case, it is essentially synonymous with the notion of \emph{Stokes graph} (c.f., \cite{kawai2005algebraic}) of the corresponding quantum curve, which controls the summability properties of WKB solutions of certain Schro\"dinger-type equations \cite{koikeschafke,NEMES2021105562,nikolaev2023exact}.

We define a \emph{trajectory} of $\varphi$ to be any maximal curve on $X$ along which
\begin{equation}
\mathrm{Im}\, e^{-i \vartheta}\int^{x}{ \sqrt{\varphi}} = {\rm constant}.
\end{equation}
\noindent is satisfied. The general structure of trajectories of quadratic differentials is described in detail in the classic book by Strebel \cite{strebel1984quadratic} (see also \cite{bridgeland2015quadratic}), which we refer to for details. We are primarily interested in trajectories with at least one endpoint on a turning point --- we call these \emph{critical trajectories}. The set of (oriented) critical trajectories is called the \emph{spectral network $\mathcal{W}_\vartheta({\varphi})$ of $\varphi$ at phase $\vartheta$}.

A fundamental and difficult in general problem is to understand and classify the trajectories associated to a given $\varphi$. This was determined for the examples in this paper in \cite{IWAKI2022108191}, but since we will only really use the BPS structure which is obtained as output, we do not delve into details and examples of spectral networks; we refer the reader to \cite{IWAKI2022108191,GAIOTTO2013239,bridgeland2015quadratic,strebel1984quadratic} for more details.

\subsubsection{Degenerate spectral networks and saddle trajectories}
\label{subsection:saddle-and-BPS-indices}

We say the spectral network $\mathcal{W}_\vartheta(\varphi)$ is \emph{degenerate} if it contains a {\em saddle trajectory} (i.e., a critical trajectory both of whose endpoints are turning points) or {\em closed trajectory} of phase $\vartheta$. 
%
%
In the physics of 4d $\mathcal{N}=2$ QFTs, they signal the presence of certain BPS states in the spectrum of the theory \cite{GAIOTTO2013239}; from a mathematical point of view, they correspond (in certain cases) to stable objects in a 3-Calabi-Yau category associated to a quiver with potential determined by $\varphi$ \cite{bridgeland2015quadratic}. 

Let us summarize the possible types of saddle trajectories which can appear in degenerate spectral networks arising when $\varphi$ is of hypergeometric type\footnote{Although there is a fifth type of saddle trajectory (which forms the boundary of a ``non-degenerate ring domain'' and the associated class $\gamma$ gives $\Omega(\gamma) = -2$ as its unrefined BPS invariant) in general, we omit it here since it does not appear for hypergeometric type $\varphi$.}: 
\begin{itemize}
\item 
A {\em type I}\, saddle connects two distinct simple zeros of $\varphi$. 
\item 
A {\em type II}\, saddle connects a simple zero and a simple pole of $\varphi$.
\item 
A {\em type III}\, saddle connects two distinct simple poles of $\varphi$.
\item 
A {\em type IV}\, saddle is a closed curve which forms the boundary of a degenerate ring domain (i.e., a maximal region foliated by closed trajectories around a second order pole of $\varphi$).
\end{itemize}

We note that type IV trajectories do not appear for the Weber, Whittaker, degenerate Bessel and Airy differentials, and in fact they are precisely the case in which additional technical difficulties arise in extending our result.

\subsubsection{Definition of $\Omega$}

Given a degenerate spectral network containing a saddle trajectory of type I, II, or III, we can associate (up to sign) to that saddle a homology class $\gamma_{\rm BPS} \in \Gamma$ represented by the closed cycle (up to its orientation) on $\widetilde{\Sigma}$ obtained as the pullback by $\pi : \widetilde{\Sigma} \to X$ of the saddle trajectory. On the other hand, for a degenerate spectral network with a type IV saddle, we define  $\gamma_{\rm BPS} = \gamma_{s_+} - \gamma_{s_-} \in \Gamma$ (up to orientation) where $s$ is the second order pole of $\varphi$ in the degenerate ring domain surrounded by the type IV saddle. We refer to homology classes $\gamma_{\rm BPS} \in \Gamma$ defined in this way as \emph{BPS cycles}. Degenerate spectral networks only appears when the phase $\vartheta$ coincides with the argument of the central charge $Z(\gamma_{\rm BPS})$ of a certain BPS cycle.



Now we recall the computation of the BPS structures introduced in \cite{GAIOTTO2013239, bridgeland2015quadratic, IWAKI2022108191}. Given any $\varphi$ of hypergeometric type, the BPS structure was determined in \cite{IWAKI2022108191}. The results are summarized in Table \ref{table:BPS-str-HG} below; we refer the reader to \cite{IWAKI2022108191} for additional details.

\begin{theorem}[{\cite[\S 4]{IWAKI2022108191}}]
Let $\varphi=Q(x)dx^2$ be any quadratic differential of hypergeometric type. The BPS structure determined by $\varphi$ is the one given in Table \ref{table:BPS-str-HG}. In particular, the BPS structure obtained is finite, integral and uncoupled.
\end{theorem}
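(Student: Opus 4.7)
The plan is to reduce the theorem to a finite case analysis over the nine quadratic differentials listed in Figure \ref{fig:confdiag}, and for each one explicitly enumerate the degenerate phases of the associated spectral network. The underlying machinery is Strebel's structure theory of trajectories of meromorphic quadratic differentials on $\mathbf{P}^1$: the complement of the critical graph $\mathcal{W}_\vartheta(\varphi)$ decomposes into a finite union of horizontal strips, half-planes, and (where second-order poles are present) ring and spiral domains. Each of our nine $\varphi$ is very simple — a rational function with only a few zeros and poles — so this decomposition can be read off more or less by hand once one writes $\sqrt{Q(x)}\,dx$ in suitable local coordinates near each singular point.

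First, I would treat the finiteness claim. For each $\varphi$, the hat-homology lattice $\Gamma$ is either rank $0$ (Airy, degenerate Bessel) or rank $1$ (all others), since $\widetilde\Sigma$ is of genus zero and the number of simple poles of $\varphi$ is small. Uncoupledness is then automatic because the intersection pairing on $\Gamma$ is identically zero. Integrality will follow from the definition of $\Omega$ on BPS cycles, which by construction takes values in $\{+1, -1, -2\}\subset \mathbb{Z}$ depending on the saddle type. So the main content to check is (a) the list of active classes and (b) the values of $Z(\gamma)$.

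Next, I would carry out the trajectory analysis case by case. For Airy and degenerate Bessel there are no saddles (no cycles in $\Gamma$), so the BPS structure is trivial and the table entry is immediate. For Weber, Whittaker, Kummer, Bessel, Degenerate Gauss, Legendre, and Gauss one rotates the phase $\vartheta$ and looks for the critical values at which a saddle trajectory appears. Using the explicit form of $\varphi$, the saddle (when it exists) is cut out by the equation $\operatorname{Im} e^{-i\vartheta}\!\int^x \sqrt{Q}\,dx = \text{const}$ joining two turning points or simple poles; one then classifies it as type I, II, III, or IV from which endpoints it connects, and assigns the corresponding $\Omega(\gamma_{\rm BPS})$. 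The central charge is then computed as the period of $\lambda$ around the lift of the saddle to $\widetilde\Sigma$, which is a residue calculation or an elementary integral (e.g.\ for Weber one gets essentially $Z(\gamma)=2\pi i m$, matching the sign convention fixed in \eqref{eq:signconv}).

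The main obstacle is book-keeping rather than conceptual: one must (i) verify that the spectral network is non-degenerate except at the claimed phases, which for differentials with multiple poles (Kummer, Bessel, Gauss) requires a careful Strebel-type argument ruling out unexpected recurrent or closed trajectories, and (ii) pin down the correct sign/orientation of $\gamma_{\rm BPS}$ so that $Z(\gamma_{\rm BPS})$ matches the tabulated value. For the cases with second-order poles, the possibility of a type IV saddle bounding a degenerate ring domain must also be ruled in or out, which in practice is the most delicate step; the presence of such a domain is controlled by the ratio of the residue of $\sqrt{\varphi}$ at the double pole to the period, and one verifies this by a direct computation. Since this theorem is taken verbatim from \cite[\S 4]{IWAKI2022108191}, where this analysis is carried out in full, for the present paper I would simply cite that reference and record the resulting Table \ref{table:BPS-str-HG}.
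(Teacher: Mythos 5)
Your proposal reaches the same endpoint as the paper: this theorem is imported verbatim from \cite[\S 4]{IWAKI2022108191}, and the paper itself offers no proof beyond recalling the construction (spectral networks, saddle types I--IV, BPS cycles) and recording Table \ref{table:BPS-str-HG}. Your closing sentence --- cite the reference and record the table --- is exactly what the paper does, and your sketch of what that reference actually proves (Strebel trajectory analysis on $\mathbf{P}^1$, phase rotation to locate degenerate networks, classification of saddles by their endpoints, period computation for $Z$) is a faithful description of the cited argument.

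Two factual slips in your sketch are worth correcting, though neither undermines the conclusion. First, $\Gamma$ is \emph{not} of rank $0$ or $1$ for all nine curves: for Kummer, Gauss, degenerate Gauss, Legendre and Bessel the hat-homology lattice has rank $\geq 2$ (e.g.\ Kummer's active classes $\gamma_{0_+}+\gamma_{\infty_\pm}$ and $\gamma_{0_+}-\gamma_{0_-}$ already span a rank-$2$ sublattice). Uncoupledness therefore does not follow from low rank; the paper's actual justification is that $\overline{\Sigma}$ has genus zero, so $H_1$ of the punctured cover is generated by loops around punctures and the intersection pairing vanishes identically. Second, the unrefined invariants in Table \ref{table:BPS-str-HG} take the values $1$, $2$, $4$ and $-1$ (for types I, II, III and degenerate ring domains respectively); your stated value set $\{+1,-1,-2\}$ omits $2$ and $4$ and includes $-2$, which only arises for nondegenerate ring domains and is explicitly excluded for hypergeometric type. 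Integrality of course still holds.
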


\begin{table}[h]
\begin{center}
\begin{tabular}{cccc}\hline
Spectral curve & $\pm\gamma_{\rm BPS}$ & $\pm Z(\gamma_{\rm BPS})/2\pi i$ & $\Omega(\gamma_{\rm BPS})$ 
\\\hline
\vspace{1mm}
\parbox[c][2.5em][c]{0em}{}
Gauss 
& $ \pm (\gamma_{0_+} + \gamma_{1_s } + \gamma_{\infty_{s'} }) $ 
\hspace{+.5em} $(s, s' \in \{ \pm \})$ 
&$ \pm ( m_0+s\, m_1+ s' m_\infty)$
&  $1$
\\[-.5em] 
& $\pm (\gamma_{p_+} - \gamma_{p_-})$
\quad $(p \in \{0,1,\infty \})$
&$\pm \, 2 m_p$ 
& $-1$\vspace{1mm}
\\[+.3em]\hline
\parbox[c][2.5em][c]{0em}{}
\begin{minipage}{.15\textwidth}
\begin{center} Degenerate Gauss \end{center}
\end{minipage}
& $\pm (\gamma_{1_+} + \gamma_{\infty_s})$ 
\quad $(s \in \{ \pm \})$
&$\pm (m_1+s\, m_\infty)$
&  $2$
\\[-.em]
& $\pm (\gamma_{p_+} - \gamma_{p_-})$
\quad $(p \in \{1,\infty \})$
&$\pm 2 m_p$ 
& $-1$ \vspace{1mm}
\\[+.3em]\hline
\parbox[c][2.5em][c]{0em}{}
Kummer 
& $\pm (\gamma_{0_+} +  \gamma_{\infty_s})$ 
~ $(s \in \{ \pm \})$
& $\pm (m_0 + s \, m_\infty)$
&  $1$
\\[-.em]
& $\pm (\gamma_{0_+} - \gamma_{0_-})$
&$\pm 2 m_0$ 
& $-1$ \vspace{1mm}
\\[+.3em]\hline
\parbox[c][2.5em][c]{0em}{}
Legendre 
& $\pm \gamma_{\infty_+}$
&$\pm m_\infty$
& $4$ 
\\[-.em]
& $\pm (\gamma_{\infty_+} -  \gamma_{\infty_-})$
& $\pm 2 m_\infty$ 
& $-1$ \vspace{1mm}
\\[+.3em]\hline
\parbox[c][2.5em][c]{0em}{}
Bessel 
&
$ \pm (\gamma_{0_+} - \gamma_{0_-})$
& $\pm 2 m_0$
&  
$-1$ 
\\\hline
\parbox[c][2.5em][c]{0em}{}
Whittaker 
& $\pm \gamma_{\infty_+}$
& $\pm m_\infty$
&  $2$
\\\hline
\parbox[c][2.5em][c]{0em}{}
Weber 
& $\pm \gamma_{\infty_+}$
&$\pm m_\infty$
&  $1$
\\\hline
\end{tabular}
\end{center}
\caption{ The (unrefined) BPS structures (active classes with corresponding central charges and BPS invariants) associated 
to the quadratic differentials of hypergeometric type corresponding to Figure \ref{fig:confdiag} (the Airy and degenerate Bessel cases are trivial). 
For $p$ an even order pole of $\varphi$,
$\gamma_{p_{\pm}}$ denotes the positively oriented small circle around $p_{\pm}$ (thanks to automorphisms of $\mathbf{P}^1$, singularities are chosen to always lie at $p=0,1$, or $\infty$). To obtain the corresponding refined BPS structures, the column with $\Omega$ should be replaced by their refined versions in Definition \ref{def:newRBPS} below. The quantum correction to the central charge is obtained by replacing $m$ with $\frac{\mu \mathscr{Q}}{2}$ in $Z(\gamma_{\rm BPS})$.
}
\label{table:BPS-str-HG}
\end{table}

\subsection{Definition of $\Omega$}
Having given the construction corresponding to the unrefined case, we can now define the refined BPS structures which are central to this paper. In fact, the lattice and central charge remain the same, with the difference being that $\Omega$ is no longer concentrated purely at $n=0$. To be precise,
\begin{definition}\label{def:newRBPS}
For each $\gamma\in\Gamma$, we define {$ \Omega(\gamma)$} as follows\footnote{There is a fifth possibility, saddles bounding a so-called ``nondegenerate ring domain'' which plays a role in the general case. However, none of the curves considered in this paper contain such a degeneration, and thus we do not discuss them.}: 
\begin{equation} \label{eq:newbps}
\Omega(\gamma)= \begin{cases} 
      1 & \quad \text{if $\gamma$ is a BPS cycle associated with a type I saddle,} \\
      t+t^{-1} & \quad \text{if $\gamma$ is a BPS cycle associated with a type II saddle,} \\
      t^2+2+t^{-2} & \quad \text{if $\gamma$ is a BPS cycle associated with a type III saddle}, \\
      -t^{-1} & \quad \text{if $\gamma$ is a BPS cycle associated with a degenerate ring domain}
   \end{cases}
\end{equation}
and we set $\Omega(\gamma) = 0$ for any non-BPS cycles $\gamma \in \Gamma$.
\end{definition}
\noindent It is clear that setting $t=1$ we recover the (integer-valued, unrefined) BPS invariants of Table \ref{table:BPS-str-HG} \cite{IWAKI2022108191}.

\begin{remark} One may argue that this definition is somewhat arbitrary. At the very least, it is a correct one that will make our formula hold, though it is not the unique such choice. In Proposition \ref{prop:Blemma} and below, we characterize and discuss its uniqueness -- in essence, $\Omega_n+\Omega_{-n}$ must remain fixed, but $\Omega_{n}$ and $\Omega_{-n}$ can be varied without spoiling our main theorem and conjecture. Requiring $\Omega_n$ to be integer-valued obviously limits this, but an infinite amibiguity remains. Further investigation is needed to justify fully the definition above (for example, to see how the approach of \cite{iwaki2023topological} generalizes to the quantum Riemann-Hilbert problem \cite{BBS}). In a slightly different context, some of this jumping behaviour is foreshadowed in \cite{iwaki2016exact, 2000297}, where a similar expression appears in the description of the connection formula for WKB solutions across a type II saddle. Finally, we note that one significant motivation for the choice of value for the degenerate ring domain is that, although it appears unmotivated here, it will be obtained as a genuine Donaldson-Thomas invariant in \cite{kidwai:toappear}.
\end{remark}

\begin{remark}We note furthermore that the definition makes perfect sense for any (reasonable) quadratic differential (whenever $\gamma$ arises from a saddle of type I, II, III, or IV), not only the examples considered in this paper. It would be very interesting to investigate the case where the resulting refined BPS structure is coupled and determine whether our $\Omega$ indeed satisfies the Kontsevich-Soibelman wall-crossing formula. Although we did not treat the case of nondegenerate ring domains here, one reasonable candidate, as in the unrefined case, would be to propose that type I and type V saddles remain unchanged from their usual value in Gaiotto-Moore-Neitzke/Bridgeland-Smith's theory, i.e we assign $\Omega(\gamma)=-(t+t^{-1})$. This provides a definition of a BPS structure associated to any generic quadratic differential, without any constraint on the genus of the spectral curve.
\end{remark}

\begin{remark}The values of $\Omega(\gamma)$ defined here for type II, III, and IV saddles have not appeared before. Although the numbers $2$ and $4$ in the unrefined case (setting $t=1$) agree with the BPS invariants obtained by Haiden \cite{Haiden:2021qts} in a slightly different context, it seems when passing to the refined case we do \textbf{not} obtain the same invariants \cite{kidwai:toappear}. More precisely, in \cite{Haiden:2021qts} a certain 3-Calabi-Yau triangulated category was constructed whose stability conditions correspond to quadratic differentials with simple zeroes and simple poles only. One might hope that a similar construction could be made to cover the case in which higher order poles occur as well. However, the refined DT invariants associated to type II and III saddles in that work were precisely $2$ and $4$ (with no $|n|>0$ terms). Thus, while seemingly related, our $\Omega$ (or their shifts preserving $\Omega_n+\Omega_{-n}$) are not the BPS invariants of the ${\rm CY}3$ triangulated category discussed in that work. It is natural to ask whether they arise as refined DT invariants of some other ${\rm CY}3$ triangulated category. However, this seems difficult to obtain since the value for the type II saddle does not resemble the ``virtual motive'' of any obvious space, which it would need to for such an interpretation.
\end{remark}

\begin{remark}\label{rem:physics}
Indeed, in the physical picture \cite{GAIOTTO2013239,Dimofte:2009tm,dimofte2010refined}, we have that the ``refined BPS index'' is given by a trace over some Hilbert space\footnote{A ``univeral hypermultiplet'' has been factored out in this formula -- see \cite{GAIOTTO2013239,Dimofte:2009tm,dimofte2010refined}.} $\mathcal{H}_{\rm BPS,\gamma}$ of BPS particles of fixed charge $\gamma$:
\begin{equation}
\Omega^{\rm ref}(\gamma,y)={\mathrm{Tr}}_{\mathcal{H}_{\mathrm{BPS},\gamma}} (-y)^{2J_3}
\end{equation}
Here, the $J_3$ appearing in the exponent is a symmetry generator (of spatial rotations around an axis), and $y$ is a formal variable (the refinement parameter) keeping track of the spin of particles. However, in all of the cases in which a double pole is present, there is an ${\rm SU}(2)$ flavour symmetry associated to that point. Since type II saddles can be thought of as the collision of a second order pole with one of the zeroes of a type I saddle, it is then natural to seek an answer as some extension of $\Omega(\gamma)$ in which in additional to the variable $y$ we introduce ``flavour'' fugacities $t_i$ with symmetry generators $F_i$ as exponents, and consider some appropriate specialization/identification of the parameters $t_i$. We do not know if such an index would agree with the values observed here.
\end{remark}

\subsection{Quantum central charge}\label{sec:quantumZ}
Finally, we can enhance the refined BPS structure with the remaining data needed to write down our formula for $F_g$.

Indeed, since our BPS structures are not arbitrary, but rather come from a (refined) spectral curve, they come equipped with one-forms $\omega_{g,1}$, almost all of which are residue-free away from $\mathcal{R}$, and whose integral along cycles descends to homology. The only exception is $\omega_{\frac12,1}$, which can have residues at ramification points. However, one observes that this arises entirely from the $-\frac{\mathscr{Q}}{2}\frac{dy}{y}$ term appearing in the definition, is eliminated by taking the odd part:
\begin{equation}
    \omega_{\frac12,1}^{\rm odd} := \frac{\mathscr{Q}}{2}\sum_{p\in\mathcal{P}_{\!\mathsmaller{+}}'}\mu_{p} \eta_p\label{incorrect formula of 1/2,1}
\end{equation} and define the \emph {quantum central charge}:
\begin{equation}
    \widehat{Z}(\gamma)=Z+ \hbar\, \zbar :=\int_\gamma \omega_{0,1}+\hbar\!\int_{\gamma} \omega_{\frac12,1}^{\rm odd}.
\end{equation}
of $\mathcal{S}^{\bm \mu}$, where $\zbar$ is a (canonical thanks to the spectral curve) {quantum correction to the central charge}. 
We do not understand its meaning in terms of BPS or stability structures, yet our formula shows it is the natural quantity appearing in the free energy. We hope it can be understood in terms of an appropriate generalization of the theory.
\begin{remark}
    One may ask why we did not consider higher-order $\hbar$ corrections in $\widehat{Z}$. This is simply because
    \begin{equation}
    \sum_{g\geq1}^{\infty}\hbar^{2g}\int_{\gamma}\omega_{g,1}=0
    \end{equation}
    due to the residue-free property, so it has no effect.
\end{remark}

\section{Explicit formula for Voros coefficients and free energies}\label{sec:main}

We now turn to the main computation of the paper. First, we need to study some integrals.

\subsection{Limits of integrals}\label{sec:limits}
In contrast with the unrefined case, it is not guaranteed that the terms
\begin{equation}
\int^{p}_{q_0}\cdots\int^{p}_{q_{n}}\omega_{g,n+1}
\end{equation}
\noindent  appearing in the wavefunction \eqref{eq:wavefn} are continuous in $p$. This is due to the pole structures of $\omega_{g,n}$, which are no longer confined to have poles at ramification points alone -- although continuity holds at generic points $p$, one must be careful as $p$ approaches points in $\mathcal{P}$. Fortunately, we can show that in the Weber and Whittaker curves, continuity does in fact hold (it is easy to check that this fails in examples when $\varphi$ has a second order pole).

\begin{lemma}[Main Lemma]\label{lem:continuous}
    Suppose $\mathcal{S}^{\rm \mu}$ is the Weber or Whittaker refined spectral curve, and choose $q_i\in\mathcal{P}$, $i=1\ldots n+1$. For all $2g+n\geq2$, the function
    
    \begin{equation}
          I_{g,n+1}(p)=\int^{p}_{q_0}\cdots\int^{p}_{q_{n}}\omega_{g,n+1}
    \end{equation}
    \noindent is continuous along the path $\alpha$, in particular at the endpoint $p=\sigma{(q_i)}$,    \begin{equation}
        I_{g,n+1}=F_{g,n+1}
    \end{equation}
    as functions of $p$ in a neighbourhood (along $\alpha$) of $\sigma(q_i)$.
\end{lemma}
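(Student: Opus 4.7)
The plan is to argue by induction on $2g+n$. For the base case $2g+n=2$: the multidifferential $\omega_{1,1}$ has poles only at $\mathcal{R}^*$ by Theorem \ref{theorem:RTR}(\ref{RTR2}), so $I_{1,1}(p)$ is manifestly continuous along $\alpha$ (which avoids $\mathcal{R}^*$), including at the endpoints $\sigma(q_0)\in\mathcal{P}$, which for both Weber and Whittaker are disjoint from $\mathcal{R}$. The remaining base cases $(g,n+1)=(\tfrac12,2)$ and $(0,3)$ are handled by direct analysis from Definition \ref{def:RTR}.

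For the inductive step, apply the global loop equation (Lemma \ref{lem:GLE}) to rewrite
\begin{equation*}
\omega_{g,n+1}(p_0,J) = -\frac{\text{Rec}^{\mathscr{Q}}_{g,n+1}(p_0,J)}{2\omega_{0,1}(p_0)} + \sum_{i=1}^n d_{p_i}\!\left(\frac{\eta_{p_i}(p_0)}{2\omega_{0,1}(p_i)}\omega_{g,n}(J)\right).
\end{equation*}
Integrating the total derivative pieces from $q_i$ to $p$ yields boundary contributions at $p_i=q_i$ and $p_i=p$. The lower boundary term vanishes: the vector field $1/\omega_{0,1}(p_i)$ has a zero of order at least two at every point of $\mathcal{P}$ (order $3$ for Weber, order $2$ for Whittaker), while $\omega_{g,n}(J)$ is regular in $p_i$ at $p_i=q_i$ by Theorem \ref{theorem:RTR}(\ref{RTR2}) since $q_i\in\mathcal{P}$ lies neither in $\mathcal{R}^*$ nor generically in $\sigma(J\setminus\{p_i\})$. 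The upper boundary term at $p_i=p$ produces an expression involving $\omega_{g,n}(p,J\setminus\{p_i\})$ whose subsequent integrations are controlled by the inductive hypothesis. The remaining contribution, $-\int\!\cdots\!\int\text{Rec}^{\mathscr{Q}}_{g,n+1}(p_0,J)/(2\omega_{0,1}(p_0))$, is split into the four pieces from \eqref{RecQ}; in each, the $J$-integrations fall under the inductive hypothesis, leaving an integrand in $p_0$ to integrate along $\alpha$.

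The main obstacle is a uniform boundedness argument needed to interchange the limit $p\to\sigma(q_i)$ with the multi-integral. As $p\to\sigma(q_i)$, a potential singularity of $\omega_{g,n+1}$ at $p_j=\sigma(p)$ drifts toward the lower endpoint $p_j=q_i$, so one must verify the near-boundary singularity remains integrable uniformly in $p$ along $\alpha$. The crucial input is that the zeros of order two or three of $1/\omega_{0,1}$ at $\mathcal{P}$ for Weber and Whittaker dominate the poles acquired by $\omega_{g,n}$ through the recursion, so the family of integrands is uniformly bounded (up to an integrable local singularity) in a neighborhood of $\sigma(q_i)$; dominated convergence then gives a finite limit for $I_{g,n+1}(p)$. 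It is precisely this compensation which fails for curves with second-order poles of $\varphi$, where $\omega_{0,1}$ has only a simple pole, exactly as foreshadowed in Remark \ref{rem:crucial}. Once continuity of $I_{g,n+1}$ along $\alpha$ is established, the identification $I_{g,n+1}=F_{g,n+1}$ in a neighborhood of $\sigma(q_i)$ follows from Lemma \ref{lem:residue3}: the two sides already agree on the dense open subset on which the integration endpoints are generic, and continuity of $I_{g,n+1}$ combined with meromorphicity of $F_{g,n+1}$ forces $F_{g,n+1}$ to have no pole along the diagonal at $\sigma(q_i)$, with the two values coinciding there.
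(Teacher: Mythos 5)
Your proposal follows essentially the same route as the paper's proof: induction on $2g+n$ via the global loop equation, a uniform-boundedness argument so that dominated convergence lets the limit $p\to\sigma(q_i)$ pass inside the multi-integral, the vanishing of the lower boundary term from the order-$\geq 2$ zero of $1/\omega_{0,1}$ at $\mathcal{P}$, and the same diagnosis of why second-order poles of $\varphi$ break the argument. The only part you leave at the level of assertion is the precise mechanism the paper uses to make uniform boundedness checkable term by term — fixing the endpoints by a $p$-dependent diffeomorphism and choosing the integration path so that $|z(p)-z(\sigma(q))|<|\zeta-z(\sigma(p))|$, which reduces everything to verifying that the coefficient of each order-$r$ pole of the integrand at the \emph{moving} singularity $\sigma(p)$ vanishes to order at least $r$ in $p$ at $\sigma(q)$ — but your sketch is consistent with, and would be completed in the same way as, that argument.
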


\begin{proof}
We treat the case with all $q_0,\ldots,q_n$ equal to lighten the notation (the other cases follow by a partial application of the same reasoning, at which point the next integral is from $\sigma(q)$ to itself). For convenience, we use a global coordinate $z$ on $\overline{\Sigma}$, and denote its dummy variables as $\zeta$, $\zeta_i$, etc. Furthermore, we use the notational convention that $\omega(\zeta)$ etc. for $\omega$ a function, section, etc. denotes the coordinate representation (local function) in a given coordinate $\zeta$ etc.

We rewrite the integral with varying endpoint $p$ instead as an integral depending on a parameter $p$ but with fixed endpoints
\begin{equation}\label{eq:covint}
    I_{g,n+1}(p)=\int_{q}^{\sigma({q})}\!\!\cdots \int_{q}^{\sigma({q})}h^*_p\,\omega_{g,n+1}
\end{equation}
where $h_{p}:\overline{\Sigma}\rightarrow \overline{\Sigma}$ is a ($p$-dependent) diffeomorphism\footnote{For example, if we take a coordinate $z$ with $\mathcal{R}=\{0,\infty\}$, $\mathcal{P}=\{\pm 1\}$, we can use $\zeta_i=-1+\frac{z+1}{2}(\xi_i+1)$.} taking $q,\sigma(q)$ to $q,p$ and we abuse notation using the same symbol for the product diffeomorphism (we denote the resulting dummy coordinates by $\xi=h^{-1}_p\circ\zeta$).
We will need the intermediate integrals ($0\leq k < n+1$): \begin{equation}
I_{{g},{n}+1}^{({k})}(\zeta_0,\ldots,\zeta_{n-k};p):=\int_{q}^{\sigma(q)}\!\!\cdots\int_{q}^{\sigma(q)} h^{*}_p\omega_{{g},{n}+1}(\zeta_0,\ldots \zeta_{n-k},\xi_{n-k+1},\ldots,\xi_n)\end{equation}
where the pullback acts on the last $k$ factors. It was shown Lemma A.5 of \cite{kidwai2023quantum} that (a fortiori) $I^{(k)}_{g,n+1}$ exists whenever $\zeta_i\neq \sigma(\zeta_j)$ and $\zeta_i\neq \sigma(p)$, is independent of the path between the endpoints of the integral and has vanishing residues at all points and is \emph{regular} at $\zeta_i = p, q,\sigma(q)$. Thus all our concerns will be concentrated on the poles at $\sigma(p)$ (in a neighbourhood of $q$), and we can chop off the rest of the path without loss of generality.

Since we are free to choose the path of integration (as long as we avoid any singularities on the interior), we can always choose $\tilde{\gamma}:[0,1]\rightarrow\overline{\Sigma}$ (ending on $p$) to ensure that for $p$ approaching $\sigma(q)$ along $\alpha$, we have \begin{equation}
\label{eq:ineq}|z(p)-z(\sigma(q))|<|\zeta_{n-k}-z(\sigma(p))|,
\end{equation}
\noindent for $\zeta_{n-k}$ lying in $z(\tilde{\gamma})$

\begin{figure}[h]
    \centering
    \includegraphics[width=0.35\linewidth]{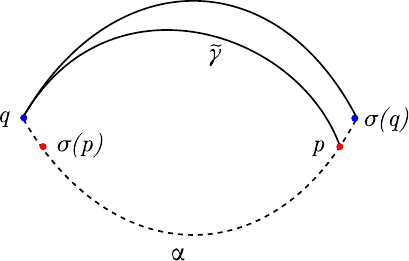}
    \caption{Paths $\tilde{\gamma}$ and $\alpha$. The unlabelled path is obtained by applying $h_p^{-1}$ to $\tilde{\gamma}$.}
    \label{fig:paths}
\end{figure}

We will show that the absolute value of $I^{(k)}_{g,n+1}(\xi_0,\ldots,\xi_{n-k};p)$ is uniformly bounded in the parameter $p$ and variable $\xi_{n-k}$ along the path $h_p(\gamma)$ for all $k<n+1$. Then by the dominated convergence theorem,

 \begin{align}
\lim_{p\rightarrow\sigma(q)}I_{g,n+1}(p)&=\int_{q}^{\sigma(q)}\lim_{p\rightarrow\sigma(q)} h^{*}_pI^{(n)}_{g,n+1}(\xi_0;p)\\ &\hspace{2.1cm}\vdots\\   &=\int_{q}^{\sigma(q)}\!\!\cdots \int_{q}^{\sigma(q)}\lim_{p\rightarrow \sigma(q)}h^*_p I_{g,n+1}^{(k)}(\xi_0,\ldots, \xi_{n-k};p)\\ &\hspace{2.1cm}\vdots\\
&=\int_{q}^{\sigma(q)}\!\!\cdots \int_{q}^{\sigma(q)}\lim_{p\rightarrow\sigma(q)}h^{*}_p\omega_{g,n+1}(\xi_0,\ldots \xi_n)
=I_{g,n+1}(\sigma(q))
\end{align}
as desired.
 \normalsize
We note that due to the dependence of the integrands on $p$ which comes from pulling back by $h_{p}$ (which has a simple derivative causing no difficulties), it is sufficient to show that for any $g,n$ and $k$, $I^{(k)}_{g,n+1}(\zeta_0,\ldots,\zeta_{n-k};p)$ is uniformly bounded along the path $\gamma$ (in the $\zeta_{n-k}$ variable) in the parameter $p$ (valued along $\alpha$), automatically ensuring uniformity in $\xi_{n-k}$. 

Finally, to show uniform boundedness in $\zeta$, note due to all functions being rational it is sufficient to show that for all $r\geq0$ and all $\zeta$ along $\gamma$ (and near $q$), the coefficient of $(\zeta_{n-k}-\zeta)^{-r}$ in the Laurent expansion of $f$ at $\zeta$ (viewed as a function of $z$) has a zero of order at least $r$ at $z=\sigma(q)$ (the converse also holds), so that \eqref{eq:ineq} can be applied.


We proceed by induction on $2g+n$. For $I_{0,3}^{(k)}$, it follows from the usual topological recursion, and $I_{1,1}$ because it is a single integral. For $I_{\frac12,2}^{(1)}$, the result follows by explicit calculation and inspection of the poles and zeroes, together with \eqref{eq:ineq}. The crucial point is that the coefficient of the pole (order $-2$ only) $I^{(1)}_{\frac12,2}(\zeta_0)$ at $\zeta_{0}=z(\sigma(q))$ has a double zero in $z$ (indeed this does not hold for e.g. the Bessel curve, as there is only a simple zero in $z$, so that \eqref{eq:ineq} does not suffice).


As the inductive assumption, we assume that for all $(\tilde{g},\tilde{n})$ with $1<m:=2\tilde{g}+\tilde{n}<2g+n $ $(=m+1)$, and all $\tilde{k}<\tilde{n}+1$, $I_{\tilde{g},\tilde{n}+1}^{(\tilde{k})}$is bounded uniformly along the (truncated) $\tilde{\gamma}$ in $\zeta_{n-k}$ for $p$ lying on $\alpha$ converging to $\sigma(q)$.




Integrating (we always integrate starting with $\zeta_n$ and proceed leftward to $\zeta_0$) the global loop equation \eqref{pole2} and using the refined recursion \eqref{eq:RTR}, we obtain the following  ``$\Delta\omega_{0,2}$ terms'' in $I^{(k)}_{g,n+1}$:
\begin{align}\label{eq:deltaterms} &-\int_{q}^{p}\!\!\ldots\int_{q}^{p}\sum_{j=1}^{n}\dfrac{\Delta\omega_{0,2}(\zeta_0,\zeta_j) \cdot \omega_{g,n}(\zeta_0,\ldots,\widehat{\zeta}_{j},\ldots,\zeta_{n})}{2y(\zeta_0)x'(\zeta_0)}\nonumber\\&\hspace{0mm}=-\sum_{j=1}^{n-k}\dfrac{\Delta\omega_{0,2}(\zeta_0,\zeta_j) \cdot I_{g,n}^{(k)}(\zeta_0,\ldots,\widehat{\zeta}_{j},\ldots,\zeta_{n-k};p)}{2y(\zeta_0)x'(\zeta_0)}\,\,-\!\!\!\sum_{j=n-k+1}^{n}\!\!\left(\frac{\int^{p}_{q}\Delta\omega_{0,2}(\zeta_0,\cdot\,)}{2y(\zeta_0)x'(\zeta_0)}\right)I_{g,n}^{(k-1)}(\zeta_0,\ldots{{\zeta}_{n-k}};p) 
\end{align}
\normalsize

\noindent where $\Delta\omega(p_0,p_1):=\omega(p_0,p_1)-\omega(\sigma(p_0),p_1)$. Since $I^{(k)}_{g,n}$ was uniformly bounded in the variable $\zeta_{n-k}$ by a constant $C^{(k)}_{g,n}(\zeta_0,\ldots,\widehat{\zeta_j},\ldots,\zeta_{n-k-1})$ (independent of $p$) along the path of integration, we have that the $i$th term in the first sum is bounded by \begin{equation}
    C_{g,n}^{(k)}(\zeta_0,\ldots,\widehat{\zeta_j},\ldots,\zeta_{n-k-1}) \sup_{\zeta_{n-k}\in\tilde{\gamma}}\Bigg| \dfrac{\Delta\omega_{0,2}(\zeta_0,\zeta_j)}{2y(\zeta_0)x'(\zeta_0)}\Bigg|.
\end{equation}
\noindent which is a constant independent of $p$ (if $n-k=0$, the sum does not appear at all).

For the second term, we note that $\int_{q}^{p}\Delta \omega_{0,2}(\zeta_0,\zeta_i)=\eta_{p}(\zeta_0)-\eta_{q}(\zeta_0)$ has poles (in $\zeta_0$) only at $p,\sigma(p),q,\sigma(q)$, and they are simple. Since $ydx$ has at least a second order pole at $\mathcal{P}$, the term inside the brackets has a pole at $\zeta_0=p,\sigma(p)$ only\footnote{Note zeroes of $ydx$ must lie in $\mathcal{R}$ by the definition of refined spectral curve -- Assumption 2.7 of \cite{kidwai2023quantum}, so we certainly do not have any poles along the path from the reciprocal.}, so the lower endpoint does not contribute at all. For the upper endpoint, it is easy to check for the term in brackets that the pole in $\zeta_0$ at $\sigma(p)$ is accompanied by a zero in $z$ at $\sigma(q)$, so it is uniformly bounded thanks to \eqref{eq:ineq} as long as we consider the path only near $q$, and the inductive assumption ensures the product is too.


Next we look at the ``generic'' (product) terms. Letting $J_{i}'$ denote the subsets of indices which are not being integrated out, we have:

\begin{align}
    -\int_{q}^{p}\ldots \int_{q}^{p}\frac{\omega_{g_1,n_1+1}(\zeta_0, \zeta_{J_1})\,\omega_{g_2,n_2+1}(\zeta_0,\zeta_{J_2})}{2y(\zeta_0)x'(\zeta_0)}  &=-\dfrac{I^{(k_1)}_{g_1,n_1+1}(\zeta_0,\zeta_{J_{1}'};p)\,I^{(k_2)}_{g_2,n_2+1}(\zeta_0,\zeta_{J_{2}'};p)}{2y(\zeta_0)x'(\zeta_0)}
\end{align}


Unless some $(g_1,n_1)$ or $(g_2,n_2)$ equals $(\frac12,1)$, uniform boundedness follows from the inductive assumption (in the $n-k=0$ case, since the denominator has poles near $p=\sigma(q)$). For terms of the form:

    \begin{equation}
        -\int_{q}^{p}\cdots\int_{q}^{p}\frac{\omega_{\frac12,1}(\zeta_0)\omega_{g-\frac12,n+1}(\zeta_0,\ldots,\zeta_n)}{2y(\zeta_0)x'(\zeta_0)}=-\left(\frac{\omega_{\frac12,1}(\zeta_0)}{2y(\zeta_0)x'(\zeta_0)}\right)I^{(k)}_{g-\frac12,n+1}(\zeta_0,\ldots,\zeta_{n-k};p).
    \end{equation}
we know $\omega_{\frac12,1}$ is holomorphic except at $\mathcal{R}$ and $\mathcal{P}$, but that its poles at $\mathcal{P}$ are simple, so thanks to the poles of $ydx$ at $\mathcal{P}$, the term in the brackets is uniformly bounded along the path. By the inductive assumption, we have uniform boundedness again.

For the third term, we have that $I^{(k)}_{g-1,n+2}({\zeta_0,\zeta_1;p})$ has potential poles in $\zeta_i$ only at $\zeta_0=\sigma(\zeta_1)$ and $\sigma(p_j)$. Thus
    \begin{equation}
-\int_{q}^{p}\cdots\int_{q}^{p}\dfrac{\omega_{g-1,n+2}(\zeta_0,\zeta_0,\zeta_1,\ldots,\zeta_{n})}{2y(\zeta_0)x'(\zeta_0)}=\dfrac{I^{(k)}_{g-1,n+2}(\zeta_0,\zeta_0,\zeta_1,\ldots,\zeta_{n-k};p)}{{2y(\zeta_0)x'(\zeta_0)}}
    \end{equation}
    can only have poles at $\zeta_0\in\mathcal{R}$. Again, by the assumption, the result is bounded uniformly.

For the fourth term,

    \begin{equation}
        -\mathscr{Q}\,\int_{q}^{p}\cdots\int_{q}^{p}\dfrac{x'(\zeta_0)\cdot \partial_{\zeta_0}\left(\frac{ \omega_{g-\frac12,n+1}(\zeta_0,\ldots,\zeta_{n})}{dx(\zeta_0)}\right)}{y(\zeta_0)x'(\zeta_0)}=-\mathscr{Q}\,\dfrac{1}{2y(\zeta_0)}\cdot \partial_{\zeta_0}\left(\frac{ I_{g-\frac12,n+1}^{(k)}(\zeta_0,\ldots,\zeta_{n-k};p)}{x'(\zeta_0)}\right)
    \end{equation}
By the product rule, we obtain the product of a uniformly bounded $I^{(k)}_{g-\frac12,n+1}$ with $\frac{-\partial_{\zeta_0}y}{2y^2x'}= \frac{\partial_{\zeta_0}\log y}{y(\zeta_0)x'(\zeta_0)}$ which is independent of $p$ (the total derivative term doesn't cause any issue since although $\partial_{\zeta_0}$ increase the order of poles in $\zeta_0$ by $1$, the $y(\zeta_0)x'(\zeta_0)$ in the denominator ensures the zero (in $p$) in the coefficient increases too, so that inequality \eqref{eq:ineq} is sufficient).

Finally, we have the derivative term in \eqref{pole2}:
\begin{align}
    &\sum_{j=1}^n\int_{q}^{p}\ldots \int_{q}^{p}\partial_{\zeta_j}\left(\frac{\eta_{\zeta_j}(\zeta_0)}{2y(\zeta_j)x'(\zeta_j)}\cdot \omega_{g,n}(\zeta_1,\ldots,\zeta_n)\right)\\&=\sum_{j=1}^{n-k}\partial_{\zeta_j}\left(\frac{\eta_{\zeta_j}(\zeta_0)}{2y(\zeta_j)x'(\zeta_j)}\cdot I^{(k)}_{g,n}(\zeta_1,{...},\zeta_{n-k};p)\right)+\!\!\!\!\!\sum_{j=n-k+1}^{n}\int_{q}^{p}\partial_{\zeta_j}\left(\frac{\eta_{\zeta_j}(\zeta_0)}{2y(\zeta_j)x'(\zeta_j)}\cdot I^{(k-1)}_{g,n}(\zeta_1,{...},\zeta_{n-k},\zeta_j;p)\right) \\
    &=\sum_{j=1}^{n-k}\partial_{\zeta_j}\left(\frac{\eta_{\zeta_j}(\zeta_0)}{2y(\zeta_j)x'(\zeta_j)}\cdot I^{({k})}_{g,n}(\zeta_1,\ldots,\zeta_{n-k};p)\right)+\!\!\!\!\!\sum_{j=n-k+1}^n\frac{\eta_{\zeta_j}(\zeta_0)}{2y(\zeta_j)x'(\zeta_j)}\cdot I^{({k-1})}_{g,n}(\zeta_1,\ldots,\zeta_{n-k},\zeta_j;p)
    {\Bigg|}_{\zeta_j=z(q)}^{\zeta_j=z(p)}
\end{align}
where the derivative term causes no issue as above (since the inside is uniformly bounded by the inductive assumption and inspection of $\frac{\eta_p(\zeta_0)}{2y(\zeta_0)x'(\zeta_0)}$ with \eqref{eq:ineq}). The contribution from the lower endpoint vanishes since the denominator contains $y(\zeta_j)x'(\zeta_j)$. For the upper endpoint, if $n\neq k$ there is no problem by the inductive assumption. If $n=k$ the only dependence on $\zeta_0$ appears in the brackets, whose pole structure can be inspected and is uniformly bounded by \eqref{eq:ineq}.


Thus, we conclude that for all $(g,n)$ with $2g+n=m+1$ and $k<n+1$, all $I^{(k)}_{g,n+1}$ are uniformly bounded in the variable $\zeta_{n-k}$ along (truncated) $\tilde{\gamma}$ as $p$ tends to $\sigma(q)$ along $\alpha$, completing the induction.

\end{proof}

\subsection{Difference relations}
The variational formula \eqref{Fvariation} can be used to obtain a fundamental relationship between Voros coefficients and the free energy. More precisely, we obtain the following difference relation between $V$ and $F$, generalizing results of \cite{iwaki2023voros,iwaki2019voros} to the refined setting:

\begin{proposition}\label{prop:diffreln}
Suppose $\mathcal{S}^{{\bm \mu}}$ is either the Weber or Whittaker curve, and let $\alpha$ denote a path from $\infty_-$ to $\infty_+$ avoiding ramification points. We have:


\begin{equation}\label{eq:diffreln}
V_{\alpha}=F\left(m-\frac{\nu-1}{2\beta^\frac12}\hbar\right)    - F\left(m-\frac{\nu+1}{2\beta^\frac12}\hbar\right) -\frac{1}{{\beta^{\frac12}}\hbar}\dfrac{\partial F_0}{\partial m}+\frac{\nu}{2\beta} \dfrac{\partial^2 F_0}{\partial^2 m}-\frac{1}{{\beta^{\frac12}}}\dfrac{\partial F_{\frac12}}{\partial m}
\end{equation}
 understood as a relation between formal series in $\hbar$.
\end{proposition}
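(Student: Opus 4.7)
The plan is to derive \eqref{eq:diffreln} by evaluating the topological recursion wavefunction $\varphi^{\rm TR}$ at the two endpoints $\infty_\pm$ of $\alpha$, then applying the variational formula (Proposition \ref{theorem:variation}) to recognise the resulting expression as the Taylor expansion of $F(m+a\hbar)$ for appropriate shifts $a$. This adapts the unrefined strategy of \cite{iwaki2019voros,iwaki2023voros} to the refined setting; the crucial new inputs are Lemma \ref{lem:continuous} and an additional $\hbar^0$ contribution from $\omega_{\frac12,1}$.

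The first step is to establish the identity
\begin{equation}
\int_\alpha T_{\rm odd}(z,\hbar)\,dz \;=\; \log\varphi^{\rm TR}(\infty_+) \;-\; \log\varphi^{\rm TR}(\infty_-)
\end{equation}
as formal series in $\hbar$. Since $T\,dz = d\log\varphi^{\rm TR}$ is exact at each order in $\hbar$, the fundamental theorem of calculus gives $\int_\alpha T\,dz = \log\varphi^{\rm TR}(\infty_+) - \log\varphi^{\rm TR}(\infty_-)$, and the pullback satisfies $\int_\alpha \sigma^*(T\,dz) = \int_{\sigma\circ\alpha} T\,dz = \log\varphi^{\rm TR}(\infty_-) - \log\varphi^{\rm TR}(\infty_+)$ because $\sigma$ interchanges $\infty_\pm$; the antisymmetric average yields the claim. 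For each stable term (i.e.\ $2g+n\geq 3$) in the wavefunction expansion, the endpoint evaluation $\int_{D(z;\bm\nu)}^n\omega_{g,n}$ at $z=\infty_\pm$ is well-defined as a limit by virtue of Lemma \ref{lem:continuous}; the unstable contributions from $(g,n)\in\{(0,1),(0,2),(\tfrac12,1)\}$ are logarithmically divergent at the endpoints, but they occupy precisely the $\hbar^{-1}$ and $\hbar^0$ coefficients that are removed by the truncation $T^{\geq 1}_{\rm odd}$ in the definition of $V_\alpha$.

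Next I would compute the wavefunction evaluations explicitly. The key observation is that every multi-integral factor $\int_{\infty_\pm}^{\infty_\pm}$ vanishes as a limit, which collapses
\begin{equation}
\int_{D(\infty_+;\bm\nu)}^{n}\!\omega_{g,n} = \nu_{\infty_-}^{n}\!\int_\alpha^{n}\!\omega_{g,n}, \qquad \int_{D(\infty_-;\bm\nu)}^{n}\!\omega_{g,n} = (-\nu_{\infty_+})^{n}\!\int_\alpha^{n}\!\omega_{g,n}.
\end{equation}
Applying the variational formula $\int_\alpha^n\omega_{g,n} = \partial_m^n F_g$ (extended to the unstable cases through Definition \ref{def:Funstable} as a formal convention), and substituting $\nu_{\infty_\pm} = (1\pm\nu)/2$, rewrites the difference as
\begin{equation}
\log\varphi^{\rm TR}(\infty_+) - \log\varphi^{\rm TR}(\infty_-) \;=\; \sum_{g,n}\frac{\hbar^{2g-2+n}(a_1^n-a_2^n)}{n!}\,\partial_m^n F_g \;=\; F(m+a_1\hbar) - F(m+a_2\hbar),
\end{equation}
where $a_1 = \tfrac{1-\nu}{2\beta^{1/2}}$ and $a_2 = -\tfrac{1+\nu}{2\beta^{1/2}}$, reproducing the first two terms of \eqref{eq:diffreln}.

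Finally, I would extract $V_\alpha$ by subtracting the $\hbar^{-1}$ and $\hbar^0$ pieces of this series. Using the identities $a_1-a_2 = \beta^{-1/2}$ and $a_1^2-a_2^2 = -\nu/\beta$, the $\hbar^{-1}$ coefficient (from $(g,n)=(0,1)$) is $\beta^{-1/2}\partial_m F_0$, and the $\hbar^0$ coefficient (from $(g,n)\in\{(0,2),(\tfrac12,1)\}$) is $-\tfrac{\nu}{2\beta}\partial_m^2 F_0 + \beta^{-1/2}\partial_m F_{\tfrac12}$; subtracting these reproduces exactly the three correction terms of \eqref{eq:diffreln}. The main obstacle throughout is the justification of the first step: in the unrefined case continuity of the wavefunction at $\infty_\pm$ is automatic because $\omega_{g,n}$ have poles only at ramification points, but in the refined setting it requires the delicate uniform-boundedness analysis carried out in Lemma \ref{lem:continuous}, and it is precisely the failure of this continuity in the presence of second-order poles that restricts the result to the Weber and Whittaker curves.
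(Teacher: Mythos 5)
Your proposal is correct and follows essentially the same route as the paper: evaluate the wavefunction integrals at the endpoints $\infty_\pm$ (justified by Lemma \ref{lem:continuous}), use $D(\infty_+)=\nu_{\infty_-}([\infty_+]-[\infty_-])$ and $D(\infty_-)=-\nu_{\infty_+}([\infty_+]-[\infty_-])$ to collapse the multi-integrals onto $\alpha$, apply the variational formula, and recognise the Taylor expansions $F(m+a_i\hbar)$ with the unstable $\hbar^{-1}$ and $\hbar^{0}$ terms subtracted. Your shifts $a_1,a_2$ and the correction terms match the paper's computation exactly.
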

\begin{proof}
The proof is similar to \cite{iwaki2019voros}, and follows easily thanks to the continuity property shown in Lemma \ref{lem:continuous}. From the definition of Voros coefficient,

\begin{align}
V_{\alpha}
&=\sum_{k=1}^{\infty}\hbar^k\int_{\infty_-}^{\infty_+}\left(\sum_{\substack{2g-2+n=k \\ g\geq0,n\geq1}} \dfrac{1}{\beta^{n/2}n!}\dfrac{d}{dz}\int_{D(z)}\!\!\!\!\!\ldots\int_{D(z)}  \omega_{g,n}(\zeta_1,\ldots,\zeta_n)\right)
\\
&=\sum_{k=1}^{\infty}\hbar^k\sum_{\substack{2g-2+n=k \\ g\geq0,n\geq1}} \dfrac{1}{\beta^{n/2}n!}\left(\int_{D(\infty_+)}\!\!\!\!\!\ldots\,\,\int_{D(\infty_+)}-\,\,\,\int_{D(\infty_-)}\!\!\!\!\!\ldots\,\,\int_{D(\infty_-)}\right)  \omega_{g,n}(\zeta_1,\ldots,\zeta_n)
\end{align}
where we used Lemma \ref{lem:continuous} to write the integral in terms of $D(\infty_{\pm};{\bm \nu})$. Then since 
\begin{equation}
    D(\infty_+)=\nu_{\infty_-}([\infty_+]-[\infty_-]), \qquad D(\infty_-)=-\nu_{\infty_+}([\infty_+]-[\infty_-])
\end{equation}

\noindent we get 

\begin{align}
    V_\alpha(\hbar)&=\sum_{k=1}^\infty \hbar^k \sum_{\substack{2g+n-2=k\\g\geq0,n\geq 1}}\dfrac{\nu_-^n-(-\nu_+)^n}{n!\beta^{n/2}}\int_{\infty_-}^{\infty_+}\ldots\int_{\infty_-}^{\infty_+}  \omega_{g,n}(\zeta_1,\ldots,\zeta_n)\\
    &=\sum_{n=1}^\infty \dfrac{\nu_{-}^n-(-\nu_{+})^n}{n!\beta^{n/2}}\hbar^n \dfrac{\partial^n F (m;\hbar)}{\partial m^n} - \dfrac{\nu_{-}-(-\nu_{+})}{\beta^{\frac12}\hbar}\dfrac{\partial F_0}{\partial m}-\frac{\nu_-^2-(-\nu_+)^2}{2\beta} \dfrac{\partial^2 F_0}{\partial^2 m}-\frac{\nu_{-}-(-\nu_{+})}{{\beta^{\frac12}}}\dfrac{\partial F_{\frac12}}{\partial m}
\end{align}
where we used the variational formula \eqref{Fvariation} in the second line.
Since we started at $m=1$, we subtract off the lowest order terms in $F$ as well, writing $\nu:=\nu_{+}-\nu_-$ to obtain the result.

\end{proof}

\begin{remark}
    This relation breaks down in the other examples when considering Voros coefficients associated to paths between the two preimages of a second order pole of $\varphi$, due to the failure of Lemma \ref{lem:continuous}. We will return to the issue of second order poles in future work.
\end{remark}

In order to use the relation \eqref{eq:diffreln} to learn about $F$, we must first know something about $V$. Fortunately, various ``contiguity relations'' between Voros coefficients exist, which we can exploit.
\begin{lemma}\label{lemm:Vrelns} The Voros coefficients $V=V_\alpha$ satisfy:
\begin{itemize}
    \item For the Weber curve, 
\begin{align}
    V(\nu+2\beta)-V(\nu)=&-\log\left(m+\hbar\frac{\mathscr{Q} \mu}{2}+\frac{\epsilon_2}{2}\nu-\frac{\epsilon_1}
{2}\right)+\log m.
\end{align}
In particular, when $\nu=-\beta$,
\begin{equation}
     V(\beta)-V(-\beta)=-\log\left(m+\hbar\frac{\mathscr{Q} \mu}{2}\right)+\log m\label{VVWeb}
\end{equation}
\item
For the Whittaker curve, 
\begin{align}
    V(\nu-2\beta)-V(\nu)=&\log\left(m+\hbar\frac{\mathscr{Q} \mu}{2}+\frac{\epsilon_2}{2}\nu+\frac{\epsilon_1}
{2}+\frac{\epsilon_1+\epsilon_2}{2}\right)\nonumber\\
&\hspace{0mm}+\log\left(m+\hbar\frac{\mathscr{Q} \mu}{2}+\frac{\epsilon_2}{2}\nu+\frac{\epsilon_1}
{2}-\frac{\epsilon_1+\epsilon_2}{2}\right) -2\log m
\end{align}
where
In particular, when $\nu=\beta$,
\begin{align}
    V(\beta)-V(-\beta)=&-\log\left(m+\hbar\frac{\mathscr{Q} \mu}{2}+\frac{\hbar\mathscr{Q}}{2}\right) -\log\left(m+\hbar\frac{\mathscr{Q} \mu}{2}-\frac{\hbar\mathscr{Q}}{2}\right) +2\log m.\label{VVWhi}
\end{align}
\end{itemize}
\end{lemma}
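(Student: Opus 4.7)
The plan is to adapt the contiguity‐relation strategy of \cite{iwaki2019voros,iwaki2023voros} to the refined setting, using the explicit form of the refined quantum curves of Theorem~\ref{theorem:main}. The key observation is that shifting $\nu$ by $\pm 2\beta$ induces a very simple shift of the parameters appearing in the quantum curve. For the Weber operator, the combination $m + \tfrac{\epsilon_{2}\nu - (\epsilon_{1}+\epsilon_{2})\mu}{2}$ is shifted by $\epsilon_{2}\beta = -\epsilon_{1}$, so that $\psi^{\rm TR}(x;\nu+2\beta)$ solves the same equation as $\psi^{\rm TR}(x;\nu)$ with the effective mass shifted by $-\epsilon_{1}$. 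For the Whittaker operator, the analogous shift of $\nu$ by $-2\beta$ leaves the operator Whittaker-shaped but with shifted parameters in a manner dictated by the $\mathscr{Q}/2$ and $\mu$ terms.

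Next I would establish the refined analogue of the classical contiguity relation. For the classical Weber/parabolic cylinder functions $D_{a}(z)$ one has the raising/lowering relations $D_{a}'(z)\pm\tfrac{z}{2}D_{a}(z)$ proportional to $D_{a\mp 1}(z)$, and for Whittaker functions $M_{\kappa,\tfrac{1}{2}\mu'}(z)$ one has analogous first‐order operators linking $\kappa$ to $\kappa\pm 1$. After replacing $d/dx$ by $\epsilon_{1}^{-1}\hbar d/dx$ and tracking the $\mu$‐dependence coming from the $\omega_{\frac{1}{2},1}$ initial data, one obtains an identity of the form
\begin{equation}
\psi^{\rm TR}(x;\nu+2\beta) \;=\; c(\nu,\hbar)\,\bigl(\epsilon_{1}\partial_{x} + A(x,\nu,\hbar)\bigr)\,\psi^{\rm TR}(x;\nu),
\end{equation}
for explicit $c$ and linear polynomial $A$, and similarly (with a first–order operator of a slightly different shape, coming from the Whittaker contiguity) in the Whittaker case. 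This step is where the bulk of the verification lies; one checks the identity by comparing both sides as formal WKB solutions of the refined quantum curve with shifted parameters, using the semiclassical limit as a uniqueness input and the half-integer initial data $\omega_{\frac{1}{2},1}$ to fix the $\mu$‐term.

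Once the operator identity is in hand, I would take the logarithmic derivative of both sides, which yields
\begin{equation}
\partial_{x}\log\psi^{\rm TR}(\nu+2\beta) - \partial_{x}\log\psi^{\rm TR}(\nu) \;=\; \partial_{x}\log\!\Bigl(\epsilon_{1}\partial_{x}\log\psi^{\rm TR}(\nu) + A(x,\nu,\hbar)\Bigr),
\end{equation}
extract the $\sigma$‐odd part as in the definition of $T_{\rm odd}$, and integrate along the path $\alpha$ from $\infty_{-}$ to $\infty_{+}$. The left‐hand side reproduces $V(\nu+2\beta)-V(\nu)$ (after subtracting the first two terms, which match on both sides since $A$ reproduces the leading semiclassical data), while on the right the integrand is the log-derivative of an \emph{algebraic} function of $x$, so the integral reduces to the difference of its values at $\infty_{\pm}$. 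Using the prescribed residues $\underset{p=\infty_{\pm}}{\rm Res}\, y\,dx=\pm m$ to read off the asymptotic behaviour of $\epsilon_{1}\partial_{x}\log\psi^{\rm TR}+A$ at $\infty_{\pm}$, one finds precisely the logarithmic expressions on the right of the stated identities. Specialising $\nu=-\beta$ (Weber) or $\nu=\beta$ (Whittaker) then gives \eqref{VVWeb} and \eqref{VVWhi}; in the Whittaker case the two log factors originate from the two residues of the first–order operator appearing in the Whittaker contiguity relation, which is why one obtains a sum of two logs minus $2\log m$.

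The main obstacle is the rigorous derivation of the $\hbar$‐deformed contiguity relation for $\psi^{\rm TR}$, in particular its $\mu$‐dependent piece. Unlike in the unrefined setting, one must track the contribution of $\omega_{\frac{1}{2},1}$ throughout, and the operator $A(x,\nu,\hbar)$ must absorb both the classical $x/2$ (resp.\ $1/2 - \kappa/x$) part and the $\mu\mathscr{Q}/2$ correction dictated by \eqref{eq:half1def}. Lemma~\ref{lem:continuous} is what guarantees that the formal manipulations at the level of wavefunctions can be translated into genuine integral identities along $\alpha$; without it (e.g.\ in the presence of second‐order poles) the argument breaks down.
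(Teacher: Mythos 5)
Your proposal follows essentially the same route as the paper: the paper's own proof is a one-line deferral to the contiguity-relation argument of Iwaki--Koike--Takei and its predecessors, which is exactly the ladder-operator strategy you sketch (shift of the effective mass parameter under $\nu\mapsto\nu\pm 2\beta$, a first-order raising/lowering operator relating the shifted and unshifted wavefunctions, log-derivative, odd part, and evaluation of the resulting algebraic logarithm at $\infty_\pm$). The one point you rightly flag as the bulk of the work --- verifying that the refined contiguity relation holds for the specific TR-normalized wavefunction, including the $\mu$-dependent contribution of $\omega_{\frac12,1}$ and the fact that $\nu$ enters $\psi^{\rm TR}$ through the divisor $D(z;\bm\nu)$ and not only through the operator coefficients --- is precisely what the paper leaves to the cited references, so your sketch is consistent with (and somewhat more explicit than) the paper's own treatment.
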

\begin{proof}
This follows from a similar argument used in \cite{iwaki2023voros,publishedpapers/8600728} which treat a special case.


 \end{proof}

Combining these with the difference relation between $V$ and $F$, we can give a difference equation for the free energy itself. In order to simplify notation, let us first define an operator $D$ as:
\begin{equation}
     \Delta_{\epsilon_1,\epsilon_2}:=-\left(e^{\frac{1}{2}\epsilon_1\partial_m}-e^{-\frac{1}{2}\epsilon_1\partial_m}\right)\left(e^{\frac{1}{2}\epsilon_2\partial_m}-e^{-\frac{1}{2}\epsilon_2\partial_m}\right)
\end{equation}
Then, $\Delta$ acts on functions $f(m;\hbar)$ of $m$ and $\hbar$, as
\begin{align}
   &\Delta_{\epsilon_1,\epsilon_2}\cdot f(m;\hbar)\nonumber\\
   &= -f\left(m-\frac{\epsilon_1}{2}-\frac{\epsilon_2}{2};\hbar\right)+  f\left(m-\frac{\epsilon_1}{2}+\frac{\epsilon_2}{2};\hbar\right)+f\left(m+\frac{\epsilon_1}{2}-\frac{\epsilon_2}{2};\hbar\right)-f\left(m+\frac{\epsilon_1}{2}+\frac{\epsilon_2}{2};\hbar\right).
\end{align}

\begin{proposition}\label{prop:Fdiff}
The free energy for the Weber curve satisfies the difference equation  \medskip
\begin{align}
\Delta_{\epsilon_1,\epsilon_2}\cdot F=\log\left(m+\frac{\mu}{2} \mathscr{Q}\hbar\right) \label{F4Web}
\end{align}
\medskip
and the free energy for the Whittaker curve satisfies\medskip
\begin{align}
   \Delta_{\epsilon_1,\epsilon_2}\cdot F=\log{\left(m+\frac{\mu}{2} \mathscr{Q}\hbar+\frac{\mathscr{Q}\hbar}{2}\right)}+\log{\left(m+\frac{\mu}{2} \mathscr{Q}\hbar-\frac{\mathscr{Q}\hbar}{2}\right)}. \label{F4Whi}
\end{align}
\end{proposition}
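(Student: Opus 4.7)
The plan is to combine the two preceding results directly: the difference relation of Proposition \ref{prop:diffreln} expresses $V_\alpha$ in terms of shifted values of $F$, and the contiguity relations of Lemma \ref{lemm:Vrelns} give $V(\beta)-V(-\beta)$ as an explicit combination of logarithms. Comparing these will recover the action of $\Delta_{\epsilon_1,\epsilon_2}$ on $F$ exactly, once the lowest-order corrections involving $F_0$ are tracked.

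Concretely, I would first specialize \eqref{eq:diffreln} at $\nu=\beta$ and $\nu=-\beta$. Using $\epsilon_1=\hbar\beta^{1/2}$ and $\epsilon_2=-\hbar\beta^{-1/2}$, a short computation rewrites the two shift arguments in each case as
\begin{align}
-\tfrac{\nu-1}{2\beta^{1/2}}\hbar\Big|_{\nu=\pm\beta}=\mp\tfrac{\epsilon_1}{2}\mp\tfrac{\epsilon_2}{2},\qquad -\tfrac{\nu+1}{2\beta^{1/2}}\hbar\Big|_{\nu=\pm\beta}=\mp\tfrac{\epsilon_1}{2}\pm\tfrac{\epsilon_2}{2},
\end{align}
so that subtracting $V(-\beta)$ from $V(\beta)$ produces precisely the four-term combination
\begin{align}
F(m-\tfrac{\epsilon_1}{2}-\tfrac{\epsilon_2}{2})-F(m-\tfrac{\epsilon_1}{2}+\tfrac{\epsilon_2}{2})-F(m+\tfrac{\epsilon_1}{2}-\tfrac{\epsilon_2}{2})+F(m+\tfrac{\epsilon_1}{2}+\tfrac{\epsilon_2}{2}),
\end{align}
which is exactly $-\Delta_{\epsilon_1,\epsilon_2}\!\cdot F$. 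The $\frac{1}{\beta^{1/2}\hbar}\partial_m F_0$ and $\frac{1}{\beta^{1/2}}\partial_m F_{1/2}$ terms in \eqref{eq:diffreln} are $\nu$-independent and drop out, while the $\frac{\nu}{2\beta}\partial_m^2 F_0$ term contributes exactly $\partial_m^2 F_0$ to $V(\beta)-V(-\beta)$.

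Next I would invoke \eqref{VVWeb} and \eqref{VVWhi} to rewrite the left-hand side in closed form, obtaining
\begin{align}
\Delta_{\epsilon_1,\epsilon_2}\!\cdot F=V(-\beta)-V(\beta)+\partial_m^2 F_0.
\end{align}
For the Weber curve, \eqref{F0Web} gives $\partial_m^2 F_0=\log m$, which cancels the extra $-\log m$ on the right-hand side of \eqref{VVWeb} (after flipping signs), leaving exactly $\log(m+\tfrac{\mu}{2}\mathscr{Q}\hbar)$. For the Whittaker curve, the analogous formula gives $\partial_m^2 F_0=2\log m$, cancelling the $-2\log m$ in \eqref{VVWhi} and leaving the sum of the two shifted logarithms in \eqref{F4Whi}.

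The proof is essentially a bookkeeping exercise once one has the two inputs in hand; there is no serious technical obstacle at this stage. The only point requiring care is keeping track of the signs in the shift arguments (which depend on identifying $\hbar\beta^{\pm 1/2}$ with $\pm\epsilon_{1,2}$) and verifying that the correction terms proportional to $\partial_m F_0$ and $\partial_m F_{1/2}$ cancel under $\nu\mapsto -\nu$; the remaining term $\tfrac{\nu}{2\beta}\partial_m^2 F_0$ is the only one that survives, and it combines with the explicit Voros difference to give the claimed right-hand sides.
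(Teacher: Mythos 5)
Your proposal is correct and is exactly the argument the paper leaves implicit: its own proof of Proposition \ref{prop:Fdiff} is a one-line citation of Proposition \ref{prop:diffreln} together with \eqref{VVWeb} and \eqref{VVWhi}, which is precisely the bookkeeping you carry out (including the cancellation of the $\nu$-independent $\partial_m F_0$, $\partial_m F_{1/2}$ terms and the surviving $\partial_m^2 F_0=\log m$, resp.\ $2\log m$, contribution). The only blemish is a sign slip in your compact display for the shift arguments at $\nu=-\beta$ (the correct reading is $\mp\tfrac{\epsilon_1}{2}-\tfrac{\epsilon_2}{2}$ and $\mp\tfrac{\epsilon_1}{2}+\tfrac{\epsilon_2}{2}$), but the four-term combination you actually write down and everything after it are correct, so the conclusion stands.
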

\vspace{1mm}\begin{proof}

This is a straightforward consequence of Proposition \ref{prop:diffreln} and \eqref{VVWeb}, \eqref{VVWhi} in Lemma \ref{lemm:Vrelns}


\end{proof}
\noindent 
As expected, \eqref{F4Web}, \eqref{F4Whi} recover exactly the difference equations for $F$ in \cite{iwaki2019voros} in the unrefined limit.
\subsection{Free energy}
Equipped with the difference equation for $F$, it is now straightforward to solve it and obtain its explicit form, following the same technique as \cite{iwaki2023voros,iwaki2019voros}.

\begin{theorem}\label{thm:main} Let $\mathcal{S}^{\bm \mu}= (\overline{\Sigma},x,y,D({\bm \mu}))$ denote the Weber, Whittaker, degenerate Bessel, or Airy refined spectral curve. For half-integers $g>1$, the genus $g$ refined topological recursion free energy of $\mathcal{S}^{\bm \mu}$ can be expressed in terms of the corresponding refined BPS structure $(\Gamma,Z,\Omega)$ together with the quantum correction $\zbar$ as


   \begin{equation}\label{eq:mainformula}    F_{g}=(-1)^{2g-2}\sum_{\gamma\in\Gamma} \sum_{{ n \in \mathbb{Z}}} \dfrac{{\mathsf{B}_{2,2g}^{[n]}(\gamma)}}{4g(2g-1)(2g-2)}\Omega_n(\gamma)\left( 
{\dfrac{2 \pi i}{Z(\gamma)}} \right)^{2g-2}
\end{equation}
where we write
\begin{equation}
\mathsf{B}_{2,2g}^{[n]}(\gamma):=  B_{2,2g}{\left(\frac{\mathscr{Q}}{2}+\frac{\zbar(\gamma)}{2\pi i}+n\frac{\mathscr{Q}}{2}\,\Big|\,\beta^{\frac{1}{2}},-\beta^{-\frac{1}{2}}\right)}\end{equation}
with $B_{2,2g}$ the double Bernoulli polynomial of degree $2g$.
\end{theorem}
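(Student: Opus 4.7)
The plan is to solve the difference equations of Proposition \ref{prop:Fdiff} order by order in $\hbar$, following the strategy used in \cite{iwaki2023voros, iwaki2019voros} for the unrefined case, and then match the result to the BPS-theoretic expression on the right of \eqref{eq:mainformula}. For the Airy and degenerate Bessel curves, the associated BPS lattice $\Gamma$ is trivial, so the right hand side of \eqref{eq:mainformula} is empty, and I would verify separately (from the explicit form of $\omega_{g,n}$ for these elementary curves, or directly from the residue in Definition \ref{def:rFg}) that $F_g = 0$ for $g>1$. The substantive cases are Weber and Whittaker.

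The main analytic step is to factor the difference operator as
\begin{equation}
\Delta_{\epsilon_1,\epsilon_2} = -4\sinh\!\Bigl(\tfrac{\epsilon_1}{2}\partial_m\Bigr)\sinh\!\Bigl(\tfrac{\epsilon_2}{2}\partial_m\Bigr),
\end{equation}
and invert it formally using the generating identity for the double Bernoulli polynomials,
\begin{equation}
\frac{t^{2}\,e^{xt}}{(e^{\epsilon_1 t}-1)(e^{\epsilon_2 t}-1)} \;=\; \sum_{k\geq 0} B_{2,k}(x\,|\,\epsilon_1,\epsilon_2)\,\frac{t^{k}}{k!}.
\end{equation}
Setting $\epsilon_1 = \hbar\beta^{1/2}$, $\epsilon_2 = -\hbar\beta^{-1/2}$ (so $\mathscr{Q} = \beta^{1/2}-\beta^{-1/2}$), one expands the right hand side of \eqref{F4Web} or \eqref{F4Whi} as a formal $\hbar$-series and reads off each $F_g$ as a polynomial expression in $m$ coming from a residue/derivative of the generating series. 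The shift $\mathscr{Q}/2$ appearing in the argument of $\mathsf{B}_{2,2g}^{[n]}$ arises from the $-\tfrac{\epsilon_1+\epsilon_2}{2}$ centring of the sinh$\cdot$sinh inverse, and the $\zbar(\gamma)/(2\pi i)$ contribution is precisely the $\mu\mathscr{Q}/2$ (half-)shift appearing inside the logarithm on the right of \eqref{F4Web}, since $\zbar(\gamma) = \pi i\mu\mathscr{Q}$ for the unique generator $\gamma$ with $Z(\gamma) = 2\pi i m$ in both cases.

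For the Weber curve, $\Omega(\gamma)=1$ is concentrated at $n=0$, so the double sum in \eqref{eq:mainformula} collapses to a single pair of terms in $\pm\gamma$ whose argument $\tfrac{\mathscr{Q}}{2}+\tfrac{\zbar(\gamma)}{2\pi i}$ reproduces the logarithm $\log(m+\tfrac{\mu\mathscr{Q}}{2}\hbar)$ after inversion. For the Whittaker curve, the right hand side of \eqref{F4Whi} contains two logarithms whose arguments differ by $\pm \tfrac{\mathscr{Q}\hbar}{2}$; these extra shifts are absorbed exactly into the $n=\pm 1$ contributions of $\Omega(\gamma) = t+t^{-1}$ through the $n\mathscr{Q}/2$ summand in the argument of $\mathsf{B}_{2,2g}^{[n]}$. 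The combinatorial prefactor $\tfrac{1}{4g(2g-1)(2g-2)}$ is the $1/k!$ from the Bernoulli generating function combined with the two derivatives needed to obtain $F_g$ from $\partial_m^{2}F_g$ (the only form in which $F_g$ enters the difference equation up to polynomial ambiguity).

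The main obstacle is the careful bookkeeping of signs, normalisations, and argument shifts needed to convert the residue of the generating function into exactly the closed form on the right of \eqref{eq:mainformula}; in particular, one must check that the symmetrisation over $\pm\gamma$ cooperates with the parity of $B_{2,2g}$ (which is indeed compatible since $2g$ is even, so $B_{2,2g}$ has the correct symmetry under $x \mapsto -x + (\epsilon_1+\epsilon_2)$) to assemble the two $\pm\gamma$ terms into the single logarithm being inverted. A secondary point is to verify that the polynomial-in-$m$ ambiguity in $F_g$ for $g>1$ is absent (which follows since the difference equation determines $F_g$ uniquely up to terms annihilated by $\Delta_{\epsilon_1,\epsilon_2}$, and $F_g$ is required to vanish as $m\to\infty$ in a suitable sense by its recursive definition), and to confirm that the resulting polynomial matches $\mathsf{B}_{2,2g}^{[n]}$ as a polynomial in $m$ and not just up to constant terms.
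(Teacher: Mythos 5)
Your proposal is correct and follows essentially the same route as the paper: starting from the difference equations of Proposition \ref{prop:Fdiff}, inverting $\Delta_{\epsilon_1,\epsilon_2}$ via the generating identity for double Bernoulli polynomials, matching the $n=0$ (Weber) and $n=\pm1$ (Whittaker, from $\Omega=t+t^{-1}$) shifts to the logarithms on the right-hand side with $\zbar(\gamma)/2\pi i=\mu\mathscr{Q}/2$, fixing the residual ambiguity by homogeneity of $F_g$ in $m$, and disposing of the Airy and degenerate Bessel cases by showing $F_g=0$ directly. The only point worth tightening is the uniqueness step: the paper deduces $\partial_m^2 G_g=0$ from $\Delta_{\epsilon_1,\epsilon_2}\cdot G=0$ and then invokes homogeneity of degree $2-2g$ (from rescaling $\omega_{0,1}$) rather than a decay condition, but these amount to the same thing.
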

\begin{proof}
Let us define two operators $R_{\text{Web}},R_{\text{Whi}}$ by
\begin{align}
   R_{\text{Web}}:&=e^{\frac12\mu\mathscr{Q}\hbar\partial_m},\\
    R_{\text{Whi}}:&=e^{\frac12(\mu+1)\mathscr{Q}\hbar\partial_m}+e^{\frac12(\mu-1)\mathscr{Q}\hbar\partial_m}.
\end{align}
Then, the difference equations \eqref{F4Web} and \eqref{F4Whi} are respectively written as
\begin{align}
    \Delta_{\epsilon_1,\epsilon_2}\cdot F = R_\mathsmaller{\bullet} \cdot \log m
\end{align}
where $\mathsmaller{\bullet}\in\{\text{Web},\text{Whi}\}$.

By the definition of double Bernoulli polynomials (Appendix \ref{sec:Bernoulli}), for any ${b}\in\mathbb{C}$ we have
\begin{align}
    \Delta_{\epsilon_1,\epsilon_2} \cdot \sum_{k\geq0}-B_{2,k}\left(\frac{{b} \mathscr{Q}}{2}+\frac{\mathscr{Q}}{2}\,|\,\beta^{\frac12},-\beta^{-\frac12}\right)\dfrac{(\hbar\partial_{m})^{k-2}}{k!}=e^{\frac12{b}\mathscr{Q}\hbar\partial_m}\label{B2importantequation}
\end{align}
where $(\partial_m)^{-1}$ denotes the antiderivative which comes with constant ambiguity that we ignore at the moment. Therefore, if we set 
\begin{align}
    \hat F_0&=\frac{1}{2} m^2 \log (m)-\frac{3 m^2}{4},\\
 \hat F_{\frac12}&=\frac12\mu\mathscr{Q} m\log m - \frac12\mu\mathscr{Q}m,\\
 \hat F_1&=-\frac{2+(1-3\mu^2)\mathscr{Q}^2}{24}\log m
\end{align}
and for $g\geq\frac32$,
\begin{align}
    \hat{F}_{g}&=\frac{(-1)^{2g-2}}{2g(2g-1)(2g-2)}B_{2,2g}\left(\frac{\mu\mathscr{Q}}{2}+\frac{\mathscr{Q}}{2}\,|\,\beta^{\frac12},-\beta^{-\frac12}\right)\left(\frac{1}{m}\right)^{2g-2},\label{hatFg}
\end{align}
then \eqref{B2importantequation} implies that $\hat F:=\sum_{g\geq0}\hbar^{2g}\hat F_g$ is a solution to \eqref{F4Web}

Recall from Definition \ref{def:Funstable} that $F_0,F_\frac12,F_1$ have some polynomial ambiguity in $m$, which we can use to set $\hat F_0=F_0,\hat F_\frac12=F_\frac12,\hat F_1=F_1$. Thus, our last task is to show that for $g\geq\frac32$, $\hat F_g=F_g$. Let us define $G:=F-\hat F$, then by construction $\Delta_{\epsilon_1,\epsilon_2}\cdot G=0$. Applying a similar argument to \cite{iwaki2023voros,iwaki2019voros}, we can show that $G_g$ should satisfy $\partial_m^2G_g(m)=0$. Then, by the homogeneity\footnote{This follows for the examples in this paper by a simple argument rescaling $\omega_{0,1}$.} of $F_g$ in $m$ for $g\geq\frac32$, we conclude that $G_g(m)=0$, and we have $F_g=\hat F_g$ for all $g\geq0$. Finally, by using properties of Bernoulli polynomials (Appendix \ref{sec:Bernoulli}), \eqref{hatFg} can be written as the expression of the theorem.

The Whittaker case follows by a similar argument using $R_{\text{Whi}}$ instead. Finally, the degenerate Bessel and Airy cases follow from Appendix \ref{appendix:airydbes}, since their BPS structures are trivial \cite{IWAKI2022108191}.




\end{proof}

\subsection{Voros coefficients}
Finally, we can combine Theorem \ref{theorem:main} with the difference relation to obtain the explicit formula for the Voros coefficients of the refined Weber and Whittaker quantum curves.

It turns out that the cycle Voros coefficients $V_\gamma$ is very easy to describe. We will use the notation $\nu : H_1(\widetilde{\Sigma}, {\mathbb Z}) \to {\mathbb C}$ for the linear map given on generators by $\nu(\pm\gamma) = \pm {\nu}$. Then we have:
\begin{proposition} The cycle Voros coefficient along $\gamma$ is a finite (two-term) series in $\epsilon_1$ written explicitly as:
\begin{equation}\label{eq:cyclecharge}
    V_\gamma(\hbar) = \frac{\widehat{Z}(\gamma)}{\hbar \beta^{\frac12}} - \frac{\pi i \nu(\gamma)}{\beta} 
\end{equation}
\end{proposition}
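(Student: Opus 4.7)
The plan is to compute $V_\gamma(\hbar) = \oint_\gamma T_{\rm odd}(z, \hbar)\,dz$ order-by-order in $\hbar$ and show only two orders survive. Unfolding $T\,dz = d_z \log \psi^{\rm TR}(x(z))$ gives
\begin{equation}
T_{\rm odd}(z, \hbar)\,dz = \sum_{g \in \frac{1}{2}\mathbb{Z}_{\geq 0},\, n \geq 1} \frac{\hbar^{2g+n-2}}{\beta^{n/2}(n-1)!}\int_{D(z)}\!\!\cdots\!\!\int_{D(z)}\omega_{g, n}^{\rm odd}(z, \zeta_2, \ldots, \zeta_n),
\end{equation}
where the superscript denotes the $\sigma$-odd part in $z$. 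I would identify the surviving contributions as follows.

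First, at order $\hbar^{-1}$, only $(g, n) = (0, 1)$ contributes: $\omega_{0, 1} = y\,dx$ is already $\sigma$-odd, so integrating gives $\oint_\gamma \omega_{0, 1}/\beta^{\frac12} = Z(\gamma)/\beta^{\frac12}$. At order $\hbar^0$, two pairs contribute. The $(\frac12, 1)$ term: since $dy/y$ is $\sigma$-invariant, $\omega_{\frac12, 1}^{\rm odd} = \frac{\mathscr{Q}}{2}\sum_{p \in \mathcal{P}'_{\!\mathsmaller{+}}}\mu_p\,\eta_p$, whose $\gamma$-period equals $\zbar(\gamma)$ by definition, yielding $\zbar(\gamma)/\beta^{\frac12}$. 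The $(0, 2)$ term: substituting $\omega_{0, 2}(z, \zeta) = -B(\sigma(z), \zeta)$, performing the $\zeta$-integration over $D(z)$ in an explicit rational parametrization of $\overline{\Sigma}$ (for the Weber case, $x = z + m/z$), and extracting the $\sigma$-odd part using how $\sigma$ acts on $dz/(z - r)$ for $r \in \mathcal{R}^\ast$, yields a meromorphic 1-form whose only $\sigma$-odd residue lies at $\infty_+$ and equals $\nu(\gamma)/(2\beta)$; by the residue theorem this contributes $-\pi i\,\nu(\gamma)/\beta$.

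For $\hbar^k$ with $k \geq 1$, I would argue that each contribution vanishes. For $n = 1$ (hence $g \geq 1$): Theorem \ref{theorem:RTR}, properties (\ref{RTR2})--(\ref{RTR3}), ensures $\omega_{g, 1}$ has poles only at $\mathcal{R}^\ast$ and is residue-free; on the genus-zero curve $\overline{\Sigma}$ any such 1-form is exact, so $\oint_\gamma \omega_{g, 1} = 0$. For $n \geq 2$: the crucial input is Lemma \ref{lem:continuous}, which ensures that $\int_{D(z)}\!\cdots\!\int_{D(z)}\omega_{g, n}(z, \zeta_2, \ldots, \zeta_n)$ extends continuously to $z \in \mathcal{P}$. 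Combined with the $\sigma$-anti-invariance of the integrand (which pairs the residues at $\infty_+$ and $\infty_-$ with opposite signs) and the residue-freeness of $\omega_{g, n}$ in each variable, this forces the multi-integral to have no residue at $\infty_+$, so its period over $\gamma$ vanishes. Summing the surviving contributions produces the claimed formula.

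The main obstacle is the last step for $n \geq 2$: the vanishing of residues at points of $\mathcal{P}$ is not automatic from the residue-freeness of $\omega_{g, n}$ alone, but rests essentially on the continuity at $\mathcal{P}$ established in Lemma \ref{lem:continuous}. This is precisely the subtlety highlighted in Remark \ref{rem:crucial}, and the failure of this continuity is exactly what would obstruct an analogous formula for curves with second-order poles.
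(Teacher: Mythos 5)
Your proposal is correct and follows essentially the same route as the paper's (very terse) proof, which cites the unrefined computation of \cite{iwaki2019voros} for the $Z$ and $\nu$ terms and observes that the only genuinely new contribution is $\omega_{\frac12,1}^{\rm odd}$, whose $\gamma$-period is $\zbar(\gamma)$ by definition; you have simply made the order-by-order bookkeeping explicit. One substantive remark: your appeal to Lemma \ref{lem:continuous} for the vanishing of the $n\geq 2$ contributions is heavier than necessary, and your closing paragraph misattributes where the real difficulty lies. For $2g+n-2\geq 1$ the $(g,n)$ term in $T_{\rm odd}\,dz$ is $\tfrac{1}{n}\,d_z$ of the single-valued meromorphic function $F_{g,n}$ furnished by Lemma \ref{lem:residue3}, and an exact meromorphic one-form on $\overline{\Sigma}$ has vanishing residue at every point --- in particular at $\infty_+$ --- regardless of whether $F_{g,n}$ is continuous or even finite there. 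So the closed-cycle statement does not rest on continuity at $\mathcal{P}$; that continuity (Lemma \ref{lem:continuous}, Remark \ref{rem:crucial}) is what is needed for the \emph{path} Voros coefficient and the difference relation of Proposition \ref{prop:diffreln}, where the endpoints of integration actually sit at $\mathcal{P}$. Your argument is still valid as written (continuity along $\alpha$ at $\infty_\pm$ rules out a pole of $F_{g,n}$ there, a fortiori a residue of $dF_{g,n}$), but the simpler exactness argument is both sufficient and more robust. Finally, there is a small internal sign inconsistency in your $(0,2)$ step: a residue equal to $\nu(\gamma)/(2\beta)$ would contribute $+\pi i\,\nu(\gamma)/\beta$ rather than $-\pi i\,\nu(\gamma)/\beta$, so the sign bookkeeping in that computation should be rechecked, though it does not affect the structure of the argument.
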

\begin{proof}
Up to the factors of $\beta$, the $Z$ and $\nu$ term are the same as in the unrefined case and follow from the same argument as in \cite{iwaki2019voros}. Note that $\frac{dy}{y}$ is invariant, and thus does not contribute after taking the odd part; the remaining part $\omega^{\rm odd}_{\frac12,1}$ is anti-invariant, so gives the remaining term. Explicitly,
\begin{equation}
    V_\gamma= \frac{2\pi i m}{\beta^\frac12\hbar}+\frac{\mathscr{Q}\mu}{2\beta^{\frac12}}-\frac{\pi i \nu}{\beta},
\end{equation}
which can be written as \eqref{eq:cyclecharge}.
\end{proof}

The path Voros coefficient is less trivial:
\begin{theorem}
For the Weber and Whittaker curves, let us write the path Voros coefficient along $\alpha$ as
\begin{equation}
V_{\alpha} =  \sum_{k \ge 1} \hbar^{k} V_{\alpha, k}.
\end{equation}
Then $V_{\alpha, k}$ can be written explicitly in terms of the  (single) Bernoulli polynomials $B_{1,k}$ as 
\begin{align}
\label{eq:thm1bmain}
V_{\alpha, k}
& =\frac{(-1)^{k+1}}{2}\sum_{\gamma \in \Gamma }\sum_{n\in\mathbb{Z}} 
\langle{\gamma,\alpha}\rangle\Omega_n(\gamma) \,\frac{\mathsf{B}_{1,k+1}^{[n]}(\gamma)}{k(k+1)}
 \, 
\left( \frac{2 \pi i}{Z(\gamma)} \right)^k.  
\end{align}

\noindent where we write
{\begin{equation}
    \mathsf{B}_{1,k}^{[n]}(\gamma)=B_{1,k}\left(\frac{\beta^{\frac12}
    }{2}-\frac{\nu(\gamma)}{2\beta^{\frac12}}+\frac{{\zbar(\gamma)}}{2\pi i} +n\frac{{\mathscr{Q}}}{2}\,\Big|\,\beta^\frac12\right),
\end{equation}}
and $\langle\cdot,\cdot\rangle$ is the intersection pairing.


\end{theorem}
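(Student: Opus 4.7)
The plan is to derive the formula for $V_{\alpha,k}$ by combining the difference relation of Proposition~\ref{prop:diffreln}, the explicit form of $F_g$ from Theorem~\ref{thm:main}, and the standard identity
\begin{equation*}
B_{2,k}(z+b\mid a,b)-B_{2,k}(z\mid a,b)=k\,B_{1,k-1}(z\mid a),
\end{equation*}
which follows immediately from the defining generating functions of the double and single Bernoulli polynomials.

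First, I would rewrite the two $F$-shifts in Proposition~\ref{prop:diffreln} as a symmetric double difference: since $\hbar/\beta^{\frac12}=-\epsilon_2$, the combination $F(m-\tfrac{\nu-1}{2\beta^{\frac12}}\hbar)-F(m-\tfrac{\nu+1}{2\beta^{\frac12}}\hbar)$ equals $F(m^{\star}+\tfrac{\epsilon_2}{2})-F(m^{\star}-\tfrac{\epsilon_2}{2})$ with $m^{\star}:=m-\tfrac{\nu}{2\beta^{\frac12}}\hbar$. Substituting the closed form of Theorem~\ref{thm:main} and applying the Bernoulli identity with $(a,b)=(\beta^{\frac12},-\beta^{-\frac12})$ collapses each double Bernoulli polynomial $B_{2,2g}$ into a single Bernoulli polynomial $B_{1,2g-1}$ in the parameter $\beta^{\frac12}$. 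The $m^{\star}$-dependence enters only through $(2\pi i/Z(\gamma))^{2g-2}$; expanding this around $m$ and repackaging the resulting series using the generating function of $B_{1,k}$ absorbs the $\nu/(2\beta^{\frac12})$ shift into the Bernoulli argument, producing precisely $\tfrac{\beta^{\frac12}}{2}-\tfrac{\nu(\gamma)}{2\beta^{\frac12}}+\tfrac{\zbar(\gamma)}{2\pi i}+n\tfrac{\mathscr{Q}}{2}$.

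Next, I would handle the unstable subtractions $-\tfrac{1}{\beta^{\frac12}\hbar}\partial_{m}F_{0}+\tfrac{\nu}{2\beta}\partial_{m}^{2}F_{0}-\tfrac{1}{\beta^{\frac12}}\partial_{m}F_{\frac12}$, which are designed to cancel precisely the $\hbar^{-1}$ and $\hbar^{0}$ contributions coming from the double difference acting on $F_{0}$ and $F_{\frac12}$, ensuring that $V_\alpha$ begins at order $\hbar^{1}$. Pairing $\gamma$ with $-\gamma$ and using that $(2\pi i/Z(\gamma))^{k}$ changes sign under $\gamma\mapsto-\gamma$ for odd $k$, the antisymmetrisation of the two contributions produces the intersection pairing $\langle\gamma,\alpha\rangle$ (equal to $\pm 1$ for these curves), which recombines the sum into one over all of $\Gamma$ and yields the claimed formula after matching $\hbar^{k}$ coefficients.

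The main obstacle is the second step: carefully tracking how the $m^{\star}$-expansion of $Z(\gamma)^{-(2g-2)}$ reassembles into a shifted Bernoulli polynomial of degree $k+1$, with a shift of the index $k$ by one in the process. The Whittaker case requires additional care, since the two-term difference operator $R_{\text{Whi}}$ from the proof of Theorem~\ref{thm:main} produces two shifts which, on the Voros side, account for the $n=\pm 1$ contributions needed to reproduce the refined invariant $\Omega(\gamma)=t+t^{-1}$ attached to the type II saddle.
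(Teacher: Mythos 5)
Your proposal is correct, but it follows a genuinely different route from the paper's. The paper never substitutes the closed form of $F_g$ from Theorem \ref{thm:main} into the difference relation: it defines $V^{\rm reg}_\alpha$ by adding back the unstable terms, applies the operator $e^{\frac12\epsilon_1\partial_m}-e^{-\frac12\epsilon_1\partial_m}$ to the resulting identity $V^{\rm reg}_\alpha = -e^{-\frac12\nu\epsilon_2\partial_m}\bigl(e^{\frac12\epsilon_2\partial_m}-e^{-\frac12\epsilon_2\partial_m}\bigr)F$, and invokes the difference equation $\Delta_{\epsilon_1,\epsilon_2}F=R_{\mathsmaller{\bullet}}\log m$ of Proposition \ref{prop:Fdiff} to get a first-order difference equation in $m$ for $V^{\rm reg}_\alpha$ alone, which it then solves via the generating function of $B_{1,k}(\cdot\,|\,\beta^{\frac12})$, with uniqueness supplied by homogeneity in $m$. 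Your route instead feeds the explicit $B_{2,2g}$ expression for $F$ directly into Proposition \ref{prop:diffreln} and collapses the double Bernoulli polynomials using $B_{2,k}(z+b\,|\,a,b)-B_{2,k}(z\,|\,a,b)=k\,B_{1,k-1}(z\,|\,a)$. What this buys you is that no inversion of a difference operator occurs, so no kernel/uniqueness argument is needed; what it costs is exactly the bookkeeping you flag as the main obstacle, namely converting the shift of $m$ sitting inside $(2\pi i/Z(\gamma))^{2g-2}$ into a shift of the Bernoulli argument. That step does close: at the level of the generating series $F\leftrightarrow e^{xw}\bigl[(e^{\beta^{1/2}w}-1)(e^{-\beta^{-1/2}w}-1)\bigr]^{-1}$ with $w=\hbar\partial_m$ acting on $\log m$, a shift $m\mapsto m+\delta\hbar$ is multiplication by $e^{\delta w}$, i.e.\ a shift $x\mapsto x+\delta$; the symmetric $\pm\frac{\epsilon_2}{2}$ difference multiplies by $e^{a_2w/2}-e^{-a_2w/2}$ with $a_2=-\beta^{-\frac12}$, which cancels one denominator factor and shifts $x$ by $-\frac{a_2}{2}=\frac{1}{2\beta^{1/2}}$, turning $\frac{\mathscr{Q}}{2}$ into $\frac{\beta^{1/2}}{2}$, while the further $m^{\star}$ shift contributes $-\frac{\nu}{2\beta^{1/2}}$ --- precisely reproducing the argument of $\mathsf{B}_{1,k+1}^{[n]}$ and the index/power bookkeeping $B_{2,j}\mapsto jB_{1,j-1}$, $(\hbar/m)^{j-2}\mapsto(\hbar/m)^{j-1}$ of \eqref{eq:thm1bmain}. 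Your remarks on the unstable subtractions, the $\gamma\leftrightarrow-\gamma$ antisymmetrisation producing $\langle\gamma,\alpha\rangle$, and the two shifts of $R_{\text{Whi}}$ accounting for $n=\pm1$ are all consistent with the paper. Note that both routes rest equally on Proposition \ref{prop:diffreln} and hence on the continuity Lemma \ref{lem:continuous}.
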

\begin{proof}
We take a similar approach to \cite{iwaki2023voros,iwaki2019voros}. Let us define $V^{\text{reg}}_{\alpha}$ by
\begin{equation}
    V^{\text{reg}}_{\alpha}:=V_\alpha+\frac{1}{{\beta^{\frac12}}\hbar}\dfrac{\partial F_0}{\partial m}-\frac{\nu}{2\beta} \dfrac{\partial^2 F_0}{\partial^2 m}+\frac{1}{{\beta^{\frac12}}}\dfrac{\partial F_{\frac12}}{\partial m}.
\end{equation}
Then, Proposition \ref{prop:diffreln} implies that $V^{\text{reg}}_{\alpha}$ satisfies
\begin{equation}
    V^{\text{reg}}_{\alpha}=-e^{-\frac12\nu\epsilon_2\partial_m}\left(e^{\frac12\epsilon_2\partial_m}-e^{-\frac12\epsilon_2\partial_m}\right)F.
\end{equation}
Using the difference equation for $F$ (Proposition \ref{prop:Fdiff}), we obtain:
\begin{equation}
    \left(e^{\frac12\epsilon_1\partial_m}-e^{-\frac12\epsilon_1\partial_m}\right)V^{\text{reg}}_{\alpha}=e^{\frac12\nu\epsilon_2\partial_m} R_\mathsmaller{\bullet}\log m.\label{Vdiff2}
\end{equation}
We note that \eqref{Vdiff2} is \emph{not} a repetition of Proposition \ref{prop:diffreln} because \eqref{Vdiff2} describes a shift of $m$ in $V^{\text{reg}}_{\alpha}$, not $\nu$.

We then apply a similar technique as the proof of Theorem \ref{thm:main}. That is, for any ${b}\in\mathbb{C}$, we have
\begin{align}
      \left(e^{\frac12\epsilon_1\partial_m}-e^{-\frac12\epsilon_1\partial_m}\right)\cdot\left(-\sum_{k\geq0}B_{1,k}\left(\frac{{b}+1}{2}\beta^\frac12\,\Big|\,\beta^\frac12\right)\dfrac{(\hbar\partial_{m})^{k-1}}{k!}\right)=e^{\frac12{b}\epsilon_1\partial_m}
\end{align}
so we get that
\begin{equation}{V_\alpha^{\rm reg
}}=\sum_{k\geq1}\frac{{{(-1)^{k+1}}{}}}{{k(k+1)}}B_{1,k+1}\left(\frac{{b}+1}{2}\beta^\frac12\,\Big|\,\beta^\frac12\right)\left(\frac{\hbar}{m}\right)^k\end{equation}solves \eqref{Vdiff2} for $\mathsmaller{\bullet}={\rm Web}$ after setting $
    {b}=-\frac{\nu}{\beta}+\frac{\mu\mathscr{Q}}{\beta^{\frac12}}$. Uniqueness follows after imposing homogeneity as before, and since $\langle\gamma,\alpha\rangle=1$, this can be written as \eqref{eq:thm1bmain}. 

The derivation for the Whittaker curve is completely parallel.


\end{proof}

\subsection{Conjecture for other spectral curves}

Although technical difficulties arise when $\varphi$ has a a second order pole\footnote{Note that we could treat the Voros coefficients associated to a path $\alpha$ between the preimages of a higher-order ($\geq3$) pole without difficulty, even if elsewhere on the surface there are second-order poles. The result would be similar to what we have presented, but to avoid introducing further notation we did not bother to state it here.} (which is the case for all other hypergeometric type curves, cf. Figure \ref{fig:confdiag}), we can still compute free energies in these examples up to some finite order. Based on this, we have found

\begin{conjecture}\label{conj:main} Fix a refined spectral curve of hypergeometric type $\mathcal{S}^{\bm \mu}= (\overline{\Sigma},x,y,D({\bm \mu}))$. The refined topological recursion free energy of $\mathcal{S}^{\bm \mu}$ can be expressed in terms of the corresponding refined BPS structure and quantum correction to the central charge in \S\ref{sec:bpsfromqd} as

    \begin{equation}\label{eq:mainformula}    F_{g}={(-1)^{2g-2}}\sum_{\gamma\in\Gamma} \sum_{{ n \in \mathbb{Z}}} \dfrac{{\mathsf{B}_{2,2g}^{[n]}(\gamma)}}{4g(2g-1)(2g-2)}\Omega_n(\gamma)\left( 
{\dfrac{2 \pi i}{Z(\gamma)}} \right)^{2g-2}
\end{equation}
where we write
\begin{equation}
\mathsf{B}_{2,2g}^{[n]}(\gamma):=  B_{2,2g}{\left(\frac{\mathscr{Q}}{2}+\frac{\zbar(\gamma)}{2\pi i}+n\frac{\mathscr{Q}}{2}\,\Big|\,\beta^{\frac{1}{2}},-\beta^{-\frac{1}{2}}\right)}\end{equation}
with $B_{2,2g}$ the double Bernoulli polynomial of degree $2g$.
\end{conjecture}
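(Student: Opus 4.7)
The plan is to extend the argument of Theorem \ref{thm:main} to each of the remaining five spectral curves of hypergeometric type (Gauss, degenerate Gauss, Kummer, Legendre, Bessel), treating each one individually since the structure of their second-order poles, mass parameters, and BPS cycles differs. The overall architecture will mirror the proved case: derive contiguity relations for Voros coefficients from the explicit quantum curve, convert these (via a variational-type identity) into a difference equation of the form $\Delta_{\epsilon_1,\epsilon_2}\cdot F = \sum_{\gamma}\Omega(\gamma)\log\!\bigl(\widehat{Z}(\gamma)/2\pi i\bigr)$ where the sum is over active BPS cycles, and then solve using the Bernoulli polynomial identity \eqref{B2importantequation}. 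Step 5 of Theorem \ref{thm:main}, in which homogeneity in the mass(es) pins down the polynomial ambiguity in $F_{g}$ for $g\geq\tfrac{3}{2}$, carries over verbatim once one works with the appropriate multi-parameter scaling.

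Concretely, I would first repeat Iwaki--Koike--Takei-style contiguity arguments using the refined quantum curves of \cite{kidwai2023quantum} to obtain finite-term difference relations for $V_\alpha$ under integer shifts of each $\nu$ parameter. These are purely algebraic consequences of the form of the quantum curve and should go through essentially unchanged from Lemma \ref{lemm:Vrelns}, producing one logarithm for each second-order pole (weighted by the classical residue) and one for each path-endpoint pole. Second, for paths $\alpha$ that only connect simple or higher-order poles, the Main Lemma \ref{lem:continuous} still applies and yields the same $V$--$F$ relation as Proposition \ref{prop:diffreln}; combining with the contiguity relations gives partial difference equations for $F$ immediately.

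The real work concerns paths $\alpha$ that terminate at (preimages of) second-order poles, where Lemma \ref{lem:continuous} fails — this is the main obstacle. My plan here is to replace the continuity statement by a \emph{controlled discontinuity} statement: interpret
\[
\int_{D(\infty_\pm)}\!\!\!\!\cdots\int_{D(\infty_\pm)}\omega_{g,n}
\]
as the limit of the meromorphic function $F_{g,n+1}$ of Lemma \ref{lem:residue3}, and compute explicitly the defect between this limit and the naive iterated integral as a finite sum of residue contributions localized at the second-order poles. The key structural claim, which I would conjecture and then verify case by case, is that these defect contributions assemble into exactly the $\log\!\bigl(\widehat{Z}(\gamma)/2\pi i\bigr)$ terms weighted by $\Omega(\gamma)=-t^{-1}$ for $\gamma$ a type IV BPS cycle, with the asymmetric assignment (as opposed to a symmetric $-(t+t^{-1})$) reflecting the fact that only one of the two sheet-orientations at a ring domain contributes to the relevant limit. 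The evidence for this comes from the low-$g$ computations the authors have already performed, and from the forthcoming identification \cite{kidwai:toappear} of $-t^{-1}$ as a genuine refined DT invariant.

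With this defect formula in hand, the difference equation for $F$ reorganises into $\Delta_{\epsilon_1,\epsilon_2}\cdot F = \sum_{\gamma}\Omega(\gamma)\log\!\bigl(\widehat{Z}(\gamma)/2\pi i\bigr)$ with the sum running over all active classes (type I, II, III, and IV) in accordance with Definition \ref{def:newRBPS} and Table \ref{table:BPS-str-HG}. One then solves exactly as in Theorem \ref{thm:main}: write each $\log\!\bigl(\widehat{Z}(\gamma)/2\pi i\bigr)$ as $R_\gamma\cdot\log(\text{generic variable})$ for an appropriate shift operator $R_\gamma = e^{(\zbar(\gamma)/2\pi i + n\mathscr{Q}/2)\hbar\partial}$, apply \eqref{B2importantequation} termwise, and invoke Proposition \ref{prop:Blemma} to confirm that any ambiguity in splitting between $n$ and $-n$ in $\Omega_n(\gamma)+\Omega_{-n}(\gamma)$ does not affect \eqref{eq:mainformula}. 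The resulting expression is exactly \eqref{eq:mainformula}, completing the conjecture. The main technical obstacle, to repeat, is the rigorous derivation of the defect formula at second-order poles — everything downstream is a bookkeeping exercise in Bernoulli calculus.
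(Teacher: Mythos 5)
First, note that the statement you are proving is stated in the paper as a \emph{conjecture}: the authors explicitly do not prove it, offering only low-genus numerical checks, precisely because their method (Lemma \ref{lem:continuous} and hence Proposition \ref{prop:diffreln}) breaks down in the presence of second-order poles. So the question is whether your proposal actually closes that gap, and it does not. The entire difficulty is concentrated in your third step, the ``controlled discontinuity'' claim: that the defect between $\lim_{p\to\sigma(q)}F_{g,n+1}$ and the naive iterated integral assembles, after summation over $(g,n)$, into logarithms of quantum central charges weighted by $\Omega(\gamma)=-t^{-1}$ for type IV cycles. You yourself label this ``the key structural claim, which I would conjecture and then verify case by case'' --- but that claim is essentially equivalent in content and difficulty to Conjecture \ref{conj:main} itself, and no mechanism for proving it is offered. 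The situation is worse than a missing continuity lemma: the paper states that \eqref{eq:diffrelnintro} \emph{does not hold} when second-order poles are present, so the $V$--$F$ relation must be replaced by something genuinely new, and identifying the correction as a specific sum of $\log\widehat{Z}$ terms with specific refined weights is the theorem, not a lemma on the way to it. Citing the low-$g$ checks and the forthcoming DT computation in \cite{kidwai:toappear} as ``evidence'' does not substitute for an argument.

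There are also secondary gaps you wave at too quickly. Lemma \ref{lem:continuous} is proved only for the Weber and Whittaker curves, and its induction uses curve-specific pole/zero orders (the paper explicitly notes that the required double zero of the coefficient of the second-order pole of $I^{(1)}_{\frac12,2}$ fails for the Bessel curve); so even your ``easy'' case of paths connecting only simple or higher-order poles requires redoing the uniform-boundedness induction for each remaining curve, not invoking the lemma as stated. For the multi-mass curves (Gauss, degenerate Gauss, Kummer) the variational formula, the choice of $F_0,F_{\frac12},F_1$, and the homogeneity argument that kills the kernel of $\Delta_{\epsilon_1,\epsilon_2}$ are all multi-variable and are asserted rather than established. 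Finally, your schematic right-hand side $\sum_\gamma\Omega(\gamma)\log\bigl(\widehat{Z}(\gamma)/2\pi i\bigr)$ does not literally reproduce even the proved Whittaker case, where the correct source term is $\sum_n\Omega_n(\gamma)\log\bigl(\widehat{Z}(\gamma)/2\pi i+n\mathscr{Q}\hbar/2\bigr)$ over a half-lattice of active classes; the $n$-dependent shifts are exactly what produce the $n\mathscr{Q}/2$ argument of $\mathsf{B}^{[n]}_{2,2g}$, so this bookkeeping cannot be glossed over. As it stands the proposal is a reasonable research plan that reduces the conjecture to another conjecture, not a proof.
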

An analogous conjecture should hold for the Voros coefficients, but we do not state it here for brevity of notation and to avoid a few subtleties. We have checked that Conjecture \ref{conj:main} holds for low values of $g$ for all curves of hypergeometric type. We hope to return to its proof in a future paper.

\subsection{Other possible $\Omega$}
Let us finally explain to what extent the BPS structure we have proposed is not unique. That is: given the particular form of the free energy we have given, to what extent would other values of $\Omega$ continue to make our formula hold?

\begin{proposition}\label{prop:Blemma}
Suppose $(\Gamma$, $Z$, $\Omega)$ is a refined BPS structure hypergeometric type, and $\zbar$ any quantum correction to the central charge. If 
\begin{equation}\overline{\Omega}:\Gamma\rightarrow \mathbb{Z}[t,t^{-1}]
\end{equation}
vanishes for all but finitely many $\gamma\in\Gamma$ and satisfies  \begin{equation}\label{eq:omegabar}\sum_{\gamma\in\Gamma} \sum_{{ n \in \mathbb{Z}}} \dfrac{{\mathsf{B}_{2,2g}^{[n]}(\gamma)}}{4g(2g-1)(2g-2)}\overline{\Omega}_n(\gamma)\left( 
{\dfrac{2 \pi i}{Z(\gamma)}} \right)^{2g-2}=\sum_{\gamma\in\Gamma} \sum_{{ n \in \mathbb{Z}}} \dfrac{{\mathsf{B}_{2,2g}^{[n]}(\gamma)}}{4g(2g-1)(2g-2)}{\Omega}_n(\gamma)\left( 
{\dfrac{2 \pi i}{Z(\gamma)}} \right)^{2g-2}\end{equation}
for all $g>1$  and all $m\neq 0$ then
\begin{equation}
\overline{\Omega}_n(\gamma)+\overline{\Omega}_{-n}(\gamma)=\Omega_n(\gamma)+\Omega_{-n}(\gamma)
\end{equation}
for all $n\in \mathbb{Z}$.
\end{proposition}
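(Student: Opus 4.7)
\emph{Proof sketch.} Set $\Delta\Omega := \overline{\Omega} - \Omega$. After clearing the nonzero $g$-dependent prefactor $(2\pi i)^{2g-2}/[4g(2g-1)(2g-2)]$, the hypothesis \eqref{eq:omegabar} becomes
\[
\sum_{\gamma \in \Gamma,\, n \in \mathbb{Z}} \frac{\mathsf{B}^{[n]}_{2,2g}(\gamma)}{Z(\gamma)^{2g-2}}\,\Delta\Omega_n(\gamma)=0
\]
for every half-integer $g>1$. I will use the standard reflection identity for Barnes' double Bernoulli polynomial, $B_{2,k}(\beta^{\frac{1}{2}}-\beta^{-\frac{1}{2}}-x\mid\beta^{\frac{1}{2}},-\beta^{-\frac{1}{2}})=(-1)^k B_{2,k}(x\mid\beta^{\frac{1}{2}},-\beta^{-\frac{1}{2}})$, which combined with $\zbar(-\gamma)=-\zbar(\gamma)$ yields $\mathsf{B}^{[-n]}_{2,2g}(-\gamma)=(-1)^{2g}\mathsf{B}^{[n]}_{2,2g}(\gamma)$. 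Substituting $(\gamma,n)\mapsto(-\gamma,-n)$ in the displayed equation, the factors $(-1)^{2g}$ and $(-1)^{2g-2}$ (the latter from $Z(-\gamma)^{2g-2}$) combine to $+1$, and $\Omega(-\gamma)=\Omega(\gamma)$ identifies $\Delta\Omega_{-n}(-\gamma)$ with $\Delta\Omega_{-n}(\gamma)$. Adding the result to the original yields
\[
\sum_{\gamma, n}\frac{\mathsf{B}^{[n]}_{2,2g}(\gamma)}{Z(\gamma)^{2g-2}}\,\tilde\Delta_n(\gamma) = 0,\qquad \tilde\Delta_n(\gamma):=\Delta\Omega_n(\gamma)+\Delta\Omega_{-n}(\gamma),
\]
with $\tilde\Delta_{-n}=\tilde\Delta_n$ automatic.

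Next I separate by active pair. For a hypergeometric BPS structure, $Z(\gamma)$ is an integer linear combination of the mass parameters $\{m_p\}$ of $\varphi$, with coefficient vectors distinct up to overall sign for distinct pairs $\{\gamma,-\gamma\}$; varying the masses makes the rational functions $1/Z(\gamma)^{2g-2}$ linearly independent over $\mathbb{C}$ modulo the identification $\gamma \sim -\gamma$. A short bookkeeping --- combining the $\gamma$ and $-\gamma$ terms using $\tilde\Delta_n(-\gamma)=\tilde\Delta_n(\gamma)$ and the Bernoulli symmetry above, then re-indexing $n\mapsto -n$ via $\tilde\Delta_{-n}=\tilde\Delta_n$ --- collapses each pair's constraint to
\[
\sum_{n\in\mathbb{Z}} \mathsf{B}^{[n]}_{2,2g}(\gamma)\,\tilde\Delta_n(\gamma) = 0
\]
for every half-integer $g>1$. (This step is empty for the Weber and Whittaker curves, which admit only a single active pair.)

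To handle the family of relations indexed by $g$ at once, I will package them via the generating function of the double Bernoulli polynomial,
\[
\sum_{k \geq 0} B_{2,k}\bigl(x \mid \beta^{\frac{1}{2}}, -\beta^{-\frac{1}{2}}\bigr)\frac{s^k}{k!} \;=\; \frac{s^2 e^{xs}}{(e^{\beta^{1/2} s}-1)(e^{-\beta^{-1/2} s}-1)},
\]
in an auxiliary formal variable $s$. Writing $x_n := \frac{\mathscr{Q}}{2} + \frac{\zbar(\gamma)}{2\pi i} + n\frac{\mathscr{Q}}{2}$, the vanishing of $\sum_n \tilde\Delta_n(\gamma)B_{2,k}(x_n)$ for every $k = 2g \geq 3$ is equivalent to the single assertion that
\[
H_\gamma(s) \;:=\; \frac{s^2\exp\!\bigl((\tfrac{\mathscr{Q}}{2}+\tfrac{\zbar(\gamma)}{2\pi i})s\bigr)\, P_\gamma(s)}{(e^{\beta^{1/2}s}-1)(e^{-\beta^{-1/2}s}-1)}, \qquad P_\gamma(s) \;:=\; \sum_{n \in \mathbb{Z}} \tilde\Delta_n(\gamma)\, e^{n\mathscr{Q}s/2},
\]
is a polynomial in $s$ of degree at most $2$; the factor $P_\gamma$ is a Laurent polynomial (finitely supported) in $z := e^{\mathscr{Q} s/2}$.

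The main obstacle --- and the conceptual heart of the argument --- is now to deduce $P_\gamma \equiv 0$. The denominator of $H_\gamma$ has simple zeros at $s \in (2\pi i/\beta^{\frac{1}{2}})\,\mathbb{Z}_{\neq 0} \cup (-2\pi i \beta^{\frac{1}{2}})\,\mathbb{Z}_{\neq 0}$ (the double zero at $s=0$ is absorbed by $s^2$ in the numerator), and the exponential factor in the numerator never vanishes, so each of these infinitely many points must be a zero of $P_\gamma$. Under $s \mapsto z = e^{\mathscr{Q}s/2}$ with $\mathscr{Q} = \beta^{\frac{1}{2}}-\beta^{-\frac{1}{2}}$, the progression $\{2\pi i k/\beta^{\frac{1}{2}}\}_{k\neq 0}$ maps to $\{e^{\pi i k(1-\beta^{-1})}\}_{k\neq 0}$, which for generic $\beta$ is an infinite set of pairwise distinct complex numbers; a nonzero Laurent polynomial in $z$ cannot vanish at infinitely many points, so $P_\gamma \equiv 0$, i.e.\ $\tilde\Delta_n(\gamma) = 0$ for every $n$ and every active $\gamma$. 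The non-active case is trivial, and the resulting identity $\tilde\Delta_n(\gamma) = 0$ is precisely $\overline\Omega_n(\gamma)+\overline\Omega_{-n}(\gamma) = \Omega_n(\gamma)+\Omega_{-n}(\gamma)$. The one delicate point is the genericity of $\beta$: for $\beta$ a root of unity some of the image points $e^{\pi i k(1-\beta^{-1})}$ could collide, but this is absorbed because the hypothesis is required to hold identically in the parameters, and the non-generic case then follows by continuity.
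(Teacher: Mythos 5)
Your argument is correct in substance and follows the same skeleton as the paper's proof: symmetrize over $(\gamma,n)\mapsto(-\gamma,-n)$ using the reflection identity \eqref{eq:refl} to reduce to the combinations $\tilde\Delta_n=\Delta\Omega_n+\Delta\Omega_{-n}$, separate active pairs using independence of the $Z(\gamma)$ over the mass parameters, and then resum against the double Bernoulli generating function. (Note that, like the paper, you implicitly use $\overline\Omega(-\gamma)=\overline\Omega(\gamma)$, which is not literally among the stated hypotheses.) The one genuine difference is the final extraction step. The paper factors the resummed identity as a never-vanishing prefactor times $\sum_{n\geq 0}\tfrac{1}{2}(c_n+c_{-n}-c_0\delta_{0,n})\,2\cosh\bigl(\tfrac{n\mathscr{Q}\pi i}{Z(\gamma)}\bigr)$ plus low-order terms and invokes linear independence of the hyperbolic cosines as functions of $1/Z(\gamma)$ — an argument valid for any $\mathscr{Q}\neq 0$. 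You instead observe that $H_\gamma$ must extend to a polynomial and force $P_\gamma$ to vanish on the zero set of the denominator, concluding because a Laurent polynomial in $z=e^{\mathscr{Q}s/2}$ cannot have infinitely many distinct zeros. This is cleaner about the analytic continuation, but it is the one place your version is strictly weaker: for every rational $\beta\neq 1$ both exponents $1-\beta^{-1}$ and $\beta-1$ are rational, so \emph{both} families of pole images are finite sets and your infinitude argument fails — this is not a measure-zero exception. Your proposed repair by ``continuity in $\beta$'' does not quite close this, because the hypothesis \eqref{eq:omegabar} is quantified over $g$ and $m$ only, not over $\beta$, so you cannot assume it holds in a neighbourhood of a bad value. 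The fix is either to adopt the paper's linear-independence formulation for the last step, or to note that for finitely supported $\tilde\Delta$ the needed number of distinct zeros is finite and can still be extracted for any fixed $\mathscr{Q}\neq 0$ by also varying $m$ (equivalently $w=2\pi i/Z(\gamma)$), which the hypothesis does permit.
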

\begin{proof}
Suppose for some $\overline{\Omega}:\Gamma\rightarrow \mathbb{Z}$ and all $g>1$ that \eqref{eq:omegabar} holds, i.e. \begin{align}
   \sum_{\gamma\in\Gamma}\sum_{n\in\mathbb{Z}}c_n(\gamma)\frac{\mathsf{B}_{2,2g}^{[n]}(\gamma)}{4g(2g-1)(2g-2)}\left(\frac{2\pi i}{Z(\gamma)}\right)^{2g-2}=0
    \end{align}
    where $c_n(\gamma)=\overline{\Omega}_n(\gamma)-\Omega_n(\gamma)$.

\noindent Consider first the case that there is only one active class $\gamma_{\pm}$ up to sign (e.g. Weber, Whittaker, Bessel). Then we have that for $g>1$,
\begin{align}
\sum_{\gamma=\gamma_\pm}\left(\sum_{n}c_n(\gamma)\frac{\mathsf{B}_{2,2g}^{[n]}(\gamma)}{4g(2g-1)(2g-2)} \right)\left(\frac{2\pi i}{Z(\gamma)}\right)^{2g-2}=0\\
    \sum_{n}\frac{c_n(\gamma)}{2} \sum_{\gamma=\gamma_\pm}\left(\frac{2\pi i}{Z(\gamma)}\right)^{-2}\frac{\mathsf{B}_{2,2g}^{[n]}(\gamma)}{(2g)!}\left(\frac{2\pi i}{Z(\gamma)}\right)^{2g}=0\\                                         \sum_{n\geq0}\frac{c_n+c_{-n}-c_0\delta_{0,n}}{2}\sum_{\gamma=\gamma_\pm}\left(\frac{2\pi i}{Z(\gamma)}\right)^{-2}\left(\frac{\mathsf{B}_{2,2g}^{[n]}(\gamma)}{(2g)!} \right)\left(\frac{2\pi i}{Z(\gamma)}\right)^{2g}=0,
       \end{align}

  \noindent where we used the double Bernoulli polynomial identity \eqref{eq:refl}, and removed the dependence of $c_n$ on $\gamma$ from the notation since it is independent of the sign. We then sum over all $g\geq0$ and use the definition of the double Bernoulli polynomial \eqref{eq:BNdef} to obtain
\begin{align}
          \sum_{n\geq0}\frac{c_n+c_{-n}-c_0\delta_{0,n}}{2}\sum_{\gamma_\pm}\left(\frac{2\pi i}{Z(\gamma)}\right)^{-2} \sum_{g\geq\frac{3}{2}}\left(\frac{\mathsf{B}_{2,2g}^{[n]}(\gamma)}{(2g)!} \right)\left(\frac{2\pi i}{Z(\gamma)}\right)^{2g}+\left(0,\frac12,1\text{ terms}\right)=0\\
                    \sum_{n\geq0}\frac{(c_n+c_{-n}-c_0\delta_{0,n})}{2}\sum_{\gamma=\gamma_\pm}\frac{ e^{\left((n+1)\frac{\mathscr{Q}}{2}+\frac{\zbar(\gamma)}{2\pi i}\right)\left(\frac{2\pi i}{Z(\gamma)}\right)}}{\left(e^{\sqrt{\beta}\frac{2\pi i}{Z(\gamma)}}-1\right)\left(e^{- \frac{1}{\sqrt{\beta}}\frac{2\pi i}{Z(\gamma)}}-1\right)} +\left(0,\frac12,1\text{ terms}\right) =0\\
                \frac{ e^{\left(\frac{\mathscr{Q}}{2}+\frac{\zbar(\gamma)}{2\pi i}\right)\left(\frac{2\pi i}{Z(\gamma)}\right)}}{\left(e^{\sqrt{\beta}\frac{2\pi i}{Z(\gamma)}}-1\right)\left(e^{- \frac{1}{\sqrt{\beta}}\frac{2\pi i}{Z(\gamma)}}-1\right)} \sum_{n\geq0}\frac{(c_n+c_{-n}-c_0\delta_{0,n})}{2}2\cosh{\frac{n \mathscr{Q} \pi i }{Z(\gamma)}} +\left(0,\frac12,1\text{ terms}\right)= 0.
\end{align}
where in the last line $\gamma$ can be either of $\gamma_+$ or $\gamma_-=-\gamma_+$. The prefactor never vanishes, but the functions (of $\mathscr{Q}$) inside the sum (and the low order terms) are clearly linearly independent, so that all $c_n+c_{-n}-c_0 \delta_{0,n}$ must vanish.

Finally, since we only considered hypergeometric type cases, two active elements $\gamma_1$, $\gamma_2\in\Gamma$ satisfying $Z(\gamma_1)=Z(\gamma_2)$ must be equal, so our argument holds termwise for each (pair of) term in the sum over $\Gamma$.
\end{proof}
{A similar result holds for the Voros coefficient}. Clearly, for $\Omega_0$ there is a unique possible value, but for other values of $n$ there is not (even if we consider only integral BPS structures). We wrote our formula in terms of a particular $\Omega$ which seemed natural to us based on hints in the literature, but it could be written just as well by any of these equivalent $\overline{\Omega}$. One possibility to select our own choice is to require $\Omega$ is palindromic for type I, II, III saddles, i.e. in the absence of ring domains (distinguishing these cases is natural from the BPS-theoretic and QFT point of view). This leaves the value of $\Omega(\gamma)=-t^{-1}$ (for $\gamma$ arising from type IV saddles) unexplained, since it is easy to check no palindromic integer-valued $\Omega$ will agree with Conjecture \ref{conj:main} --- in this case, we chose it in order to agree with the DT invariant obtained in \cite{kidwai:toappear}. We leave a deeper understanding to future work.

\appendix

\section{Multiple Bernoulli polynomials}\label{sec:Bernoulli}
Our results are phrased in terms of multiple Bernoulli polynomials, and we use several identities between such expressions repeatedly throughout the text. Let us recall the definitions, and state some identities.


For any integer $N\geq1$, fix a tuple of nonvanishing complex numbers $(a_1,\ldots,a_N)$. The multiple Bernoulli polynomial is defined by
\begin{equation}\label{eq:BNdef}
    \frac{w^Ne^{xw}}{(e^{a_1 w}-1)\ldots(e^{a_N w}-1)}=\sum_{k\geq0}B_{N,k}(x\,|\,a_1,\ldots,a_N)\cdot \frac{w^k}{k!}
\end{equation}

\noindent Note that $B_{1,k}(x\,|\,1)=B(x)$ is the (usual) Bernoulli polynomial. We refer to the cases $N=1$ and $N=2$ as the single and double Bernoulli polynomials, respectively.

The multiple Bernoulli polynomials satisfy the rescaling property
\begin{equation}\label{eq:rescaling}
    B_{N,k}(\lambda\,|\,\lambda a_1,\ldots \lambda a_N)=\lambda^{k-N}B_{N,k}(x\,|\,a_1,\ldots a_N).
\end{equation}

\noindent for $\lambda\in\mathbb{C}^*$. 

For our choice of parameters $a_1=\beta^\frac12$, $a_2=-\beta^{-\frac12}$ we can write the important symmetry relation
\begin{equation}\label{eq:refl}
    B_{2,k}\left(-x+\frac{\mathscr{Q}}{2}\,\Big|\,\beta^{\frac12},-\beta^{-\frac12}\right)=    (-1)^kB_{2,k}\left(x+\frac{
    \mathscr{Q}}{2}\,\Big|\,\beta^\frac12,-\beta^{-\frac12}\right)
\end{equation}
which is often useful to write as the $\mathscr{Q}$-shift:
{\begin{equation}\label{eq:2Bidentity}
     B_{2,k}\left(-x+\mathscr{Q}\,\big|\,\beta^{\frac12},-\beta^{-\frac12}\right)=(-1)^k B_{2,k}\left(x\,\big|\,\beta^{\frac12},-\beta^{-\frac12}\right).
\end{equation}}
Similarly, for $B_{1,k}$ we have
\begin{equation}
(-1)^kB_{1,k}(-x\,|\,\beta^\frac12)=B_{1,k}(x+\beta^\frac12\,|\,\beta^\frac12)
\end{equation}
which is a rewriting of the basic identity for (ordinary) Bernoulli polynomials.





\section{Proof for Airy curve and degenerate Bessel curve}\label{appendix:airydbes}
We will prove that $F_g=0$ for both Airy curve and degenerate Bessel curve. Since similar arguments can be applied to both cases, we only focus on the Airy curve.

For the Airy curve, each $\omega_{g,n}(p_1,..,p_n)$ can be written as a rational differential in $y_i:=y(p_i)$. This is because the function field of $\mathcal{C}$ is generated by $x,y$ (c.f. Assumption 2.7 in \cite{kidwai2023quantum}) and $x=y^2$ for the Airy curve. In particular this implies that for each $2g,n\in\mathbb{Z}_{\geq0}$ there exists a rational function $W_{g,n+1}(y_0,..,y_n)$ satisfying
\begin{equation}
    \omega_{g,n+1}(p_0,..,p_n)=W_{g,n+1}(y_0,..,y_n)dy_0\cdots dy_n.
\end{equation}
For example, we have
\begin{equation}
     \omega_{0,1}(p_0)=2y_0^2dy_0,\quad \omega_{0,2}(p_0,p_1)=\frac{dy_0dy_1}{(y_0+y_1)^2},\quad \omega_{\frac12,1}(p_0)=-\frac{\mathscr{Q}}{2}\frac{dy_0}{y_0}.\label{B1}
\end{equation}

\begin{lemma}\label{lem:monomial}
       Let $c\in\mathbb{C}^*$. Then for all $2g,n\in\mathbb{Z}_{\geq0}$, we have:
    \begin{equation}
        W_{g,n+1}(cy_0,..,cy_n)=c^{2-6g-4n}W_{g,n+1}(y_0,..,y_n)
    \end{equation}
\end{lemma}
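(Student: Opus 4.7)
The plan is to prove by induction on $2g+n$ that the multidifferential $\omega_{g,n+1}$ satisfies
\begin{equation*}
\Phi_c^*\omega_{g,n+1}\,=\,c^{\,3-6g-3n}\,\omega_{g,n+1},
\end{equation*}
where $\Phi_c$ denotes the simultaneous scaling $(y_0,\ldots,y_n)\mapsto(cy_0,\ldots,cy_n)$ induced by the automorphism $\phi_c:y\mapsto cy$ of $\mathcal{C}\cong\mathbf{P}^1$. Since each of the $n+1$ factors $dy_i$ in $\omega_{g,n+1}=W_{g,n+1}\,dy_0\cdots dy_n$ contributes one factor of $c$, this is equivalent to the stated identity $W_{g,n+1}(cy_0,\ldots,cy_n)=c^{2-6g-4n}W_{g,n+1}(y_0,\ldots,y_n)$. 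The base cases $(g,n+1)\in\{(0,1),(0,2),(\tfrac12,1)\}$ are verified by direct inspection of \eqref{B1}.

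For the inductive step, one extends $\Phi_c$ to act on the residue variable $p$ as well and applies $\Phi_c^*$ to the recursion \eqref{eq:RTR}. A direct calculation shows $\eta_p(p_0)=2y\,dy_0/(y_0^2-y^2)$ is $\Phi_c$-invariant (numerator and denominator each weigh $c^2$), so the kernel $K(p_0,p):=\eta_p(p_0)/(2\omega_{0,1}(p))$ has weight $-3$, since $\omega_{0,1}=2y^2dy$ carries weight $3$. In each term of $\mathrm{Rec}^{\mathscr{Q}}_{g,n+1}$, the factor $dx(p)\,dx(p_i)/(x(p)-x(p_i))^2$ is $\Phi_c$-invariant (both $dx=2y\,dy$ and $x-x_i=y^2-y_i^2$ weigh $c^2$); the constituent multidifferentials $\omega_{g',n'+1}$ carry weight $3-6g'-3n'$ by the inductive hypothesis; and the operator $d_p$ preserves weight, since the loss of one from $y$-differentiation is compensated by the new $dy$-factor. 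A direct accounting then shows every term of $\mathrm{Rec}^{\mathscr{Q}}_{g,n+1}$ has weight $6-6g-3n$.

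It remains to observe that residues commute with $\Phi_c^*$ after the change of variable $p\mapsto p/c$: the ramification locus $\mathcal{R}=\{y=0\}$ is $\phi_c$-invariant, $\sigma\circ\phi_c=\phi_c\circ\sigma$, and $\mathcal{P}'_{\!\mathsmaller{+}}=\emptyset$ for the Airy curve, so the sum over $\mathcal{R}\cup\sigma(J_0)$ transforms equivariantly. Combining the above gives $\Phi_c^*\omega_{g,n+1}=c^{-3+(6-6g-3n)}\omega_{g,n+1}=c^{3-6g-3n}\omega_{g,n+1}$, closing the induction. The main technical point is the careful bookkeeping of weights through the refined correction term involving $d_p$; otherwise the argument is a routine scaling calculation that exploits the absence of free parameters on the Airy spectral curve, and an analogous argument handles the degenerate Bessel case.
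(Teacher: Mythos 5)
Your proof is correct and rests on the same mechanism as the paper's: homogeneity of the recursion under the scaling $y\mapsto cy$, with the base cases read off from \eqref{B1}. The paper phrases this more compactly (scaling $\omega_{0,1}$ by $c$ scales $\omega_{g,n}$ by $c^{2-2g-n}$, and $y\to cy$ amounts to $\omega_{0,1}\to c^3\omega_{0,1}$), while you carry out the same weight bookkeeping term by term through $\mathrm{Rec}^{\mathscr{Q}}_{g,n+1}$ — a more explicit but not essentially different argument.
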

\begin{proof}
It is clear from \eqref{B1} that the statement holds for $(g,n)=(0,0), (0,1), (\frac12,0)$. For others, it is easy to see due to the presence of $\omega_{0,1}$ in the denominator of the recursion formula (Definition \ref{def:RTR}) that if $\omega_{0,1}$ is scaled by $c\in\mathbb{C}^*$, then $\omega_{g,n}$ is accordingly scaled by $c^{2-2g-n}$. Since $y\to c y$ is equivalent to $\omega_{0,1}\to c^3\omega_{0,1}$, this implies the lemma.
\end{proof}

\begin{proposition}\label{prop:dbesairy}
     $F_g=0$ for the Airy curve.
\end{proposition}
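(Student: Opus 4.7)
The plan is to combine Lemma \ref{lem:monomial} (at $n=0$) with the residue formula of Definition \ref{def:rFg}. First I would pin down the residue data on the Airy curve: taking $y$ as a global coordinate on $\mathcal{C}\cong\mathbf{P}^1$, one has $\omega_{0,1}=2y^2\,dy$, which has a double zero at $y=0$ and a pole at $y=\infty$. Hence $\mathcal{R}^*=\{y=0\}$ (the point $y=\infty$ is an ineffective ramification point and is excluded from $\mathcal{R}^*$). A primitive of $\omega_{0,1}$ is $\Phi(p)=\tfrac{2}{3}\,y(p)^{3}$, so for $g>1$ the definition reduces to
\begin{equation}
F_g=\frac{1}{2-2g}\,\underset{y=0}{\mathrm{Res}}\,\tfrac{2}{3}\,y^{3}\,W_{g,1}(y)\,dy.
\end{equation}

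Next I would determine $W_{g,1}$ explicitly up to an overall constant. Applying Lemma \ref{lem:monomial} with $n=0$ yields $W_{g,1}(cy)=c^{\,2-6g}\,W_{g,1}(y)$ for every $c\in\mathbb{C}^{\ast}$. A rational function in one variable satisfying such a pure scaling identity must be a monomial: in a partial fraction decomposition, the polynomial part collapses to a single power, while any pole at a nonzero point $a$ would move to $a/c$ under $y\mapsto cy$, contradicting homogeneity. Therefore $W_{g,1}(y)=C_g\,y^{\,2-6g}$ for some constant $C_g\in\mathbb{C}$.

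Substituting, the integrand becomes $\tfrac{2C_g}{3}\,y^{\,5-6g}\,dy$. For half-integer $g>1$ one has $5-6g\le -4<-1$, so the coefficient of $y^{-1}$ vanishes identically and the residue at $y=0$ is zero, giving $F_g=0$ as required. I do not expect any real obstacle: the only inputs are Lemma \ref{lem:monomial} (already established) and the elementary fact that homogeneous one-variable rational functions are monomials. The degenerate Bessel case will follow by the same template, after verifying the analogous scaling property for its $W_{g,1}$ and computing the corresponding primitive; a single residue again vanishes by a degree count.
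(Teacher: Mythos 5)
Your proposal is correct and follows essentially the same route as the paper: both reduce to showing the single residue in Definition \ref{def:rFg} vanishes by using Lemma \ref{lem:monomial} to force $W_{g,1}$ to be a monomial $C_g\,y^{2-6g}$, then observing that the exponent of $y$ in $\Phi\cdot\omega_{g,1}$ never equals $-1$ for $g>1$. The only cosmetic difference is that you derive the monomial form purely from homogeneity of one-variable rational functions, while the paper additionally invokes the known pole location of $\omega_{g,1}$; both are valid.
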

\begin{proof}
    Recall from Definition \ref{def:Funstable} that $F_0,F_\frac12,F_1$ for the Weber and Whittaker curve are defined as a solution of differential equations with respect to parameters of the corresponding spectral curve. Since there is no parameter in the Airy curve, we define those to be zero. Thus, we will show that $F_g=0$ for $g>1$.

    Since $\omega_{g,1}$ for $g>1$ has a pole only at the preimage of $y(p)=0$ which is none other than the effective ramification point, Lemma \ref{lem:monomial} implies that there exists a constant $c_{g,1}\in\mathbb{C}[\mathscr{Q}]$ such that
    \begin{equation}
        \omega_{g,1}(p)=c_{g,1}\frac{dy}{y^{6g-2}}.
    \end{equation}
    Now for the Airy curve, we can explicitly write $\Phi(p)$ as
    \begin{equation}
        \Phi(p)=y^3+c_\phi,
    \end{equation}
    where $c_\phi$ is some constant. Therefore, we find
    \begin{equation}
        \Phi(p)\cdot\omega_{g,1}(p)=c_{g,1}\cdot \left(\frac{1}{y^{6g-5}}+\frac{c_\phi}{y^{6g-2}}\right)dy.
    \end{equation}
    For $g>1$, this clearly indicates that $\Phi(p)\cdot\omega_{g,1}(p)$ has no residue, hence we have $F_g=0$.
\end{proof}
\begin{remark}
    For the degenerate Bessel curve, a similar argument can be applied by replacing $y$ with $y^{-1}$. The degree of homogeneity of $W_{g,n+1}$. becomes $2g-2$ instead of $2-6g-4n$, and the effective ramification point will become the pole of $y(p)$ instead of the zero of $y(p)$. Then, we can show that $\Phi(p)\cdot\omega_{g,1}(p)$ has no residue by simply counting the pole degree and we obtain that $F_g=0$ for the degenerate Bessel curve too.
\end{remark}

\bibliography{refs}
\bibliographystyle{utphys.bst}

\end{document}